\definecolor{TUblue}{RGB}{0,102,153}
\definecolor{TUGreen}{RGB}{0, 126, 113}
\definecolor{TUYellow}{RGB}{225, 137, 34}
\definecolor{TUred}{RGB}{252,25,25}
\definecolor{best}{HTML}{008000}
\definecolor{row}{RGB}{0,102,153}
\definecolor{col}{HTML}{ffff00}
\definecolor{darkgreen}{rgb}{0.01, 0.75, 0.24}
\definecolor{codegray}{rgb}{0.5,0.5,0.5}
\definecolor{pyBlue}{HTML}{1f77b4}
\definecolor{pyRed}{HTML}{d62728}
\definecolor{pyGreen}{HTML}{2ca02c}
\definecolor{pyOrange}{HTML}{ff7f0e}
\definecolor{pyPurple}{HTML}{9467bd}
\definecolor{pyYellow}{HTML}{bcbd22}
\definecolor{pyGrey}{HTML}{7f7f7f}
\tikzset{reference/.style={thick,dashed}}
\pgfplotsset{%
	compat=newest,%
	every axis/.style={scale only axis},%
	grid style={densely dotted, semithick},%
}
\pgfplotsset{first/.style={
		x filter/.code={
			\ifnum\coordindex<#1\else\fi
		}
}}
\newcommand{\logLogSlope}[6][]{
	\pgfplotsextra{
		\pgfkeysgetvalue{/pgfplots/xmin}{\xmin}
		\pgfkeysgetvalue{/pgfplots/xmax}{\xmax}
		\pgfkeysgetvalue{/pgfplots/ymin}{\ymin}
		\pgfkeysgetvalue{/pgfplots/ymax}{\ymax}

		\pgfmathsetmacro{\xArel}{#2}
		\pgfmathsetmacro{\yArel}{#3}
		\pgfmathsetmacro{\xBrel}{#2+#4}

		\pgfmathsetmacro{\lnxA}{\xmin*(1-\xArel)+\xmax*\xArel} 
		\pgfmathsetmacro{\lnyA}{\ymin*(1-\yArel)+\ymax*\yArel} 
		\pgfmathsetmacro{\lnxB}{\xmin*(1-\xBrel)+\xmax*\xBrel} 
		\pgfmathsetmacro{\lnyB}{\lnyA+#5*(\lnxB-\lnxA)}
		\pgfmathsetmacro{\yBrel}{\lnyB-\ymin)/(\ymax-\ymin)}

		\coordinate (A) at (rel axis cs:\xArel,\yArel);
		\coordinate (B) at (rel axis cs:\xBrel,\yBrel);

		\draw[#1] (A)-- node[above=#6,anchor=center,sloped] {\small $r = #5$} (B);
	}
}
\newcommand{\fvec}{\boldsymbol{f}}
\newcommand{\bvec}{\boldsymbol{b}}
\newcommand{\gvec}{\boldsymbol{g}}
\newcommand{\Amat}{\boldsymbol{A}}
\newcommand{\Eta}{\mathrm{H}}
\newcommand{\Zeta}{\mathrm{Z}}
\newcommand{\exact}{\star}
\newcommand{\elll}{{\underline{\ell}}}
\newcommand{\mm}{{\underline{m}}}
\newcommand{\nn}{{\underline{n}}}
\newcommand{\kk}{{\underline{k}}}
\newcommand{\mmu}{{\underline{\mu}}}
\newcommand{\nnu}{{\underline{\nu}}}
\newcommand{\jj}{{\underline{j}}}
\newcommand\vvvert{|\mkern-1.5mu|\mkern-1.5mu|}
\newcommand{\lamalg}{\const{\lambda}{alg}}
\newcommand{\lamsym}{\const{\lambda}{sym}}
\newcommand{\qalg}{\const{q}{alg}}
\newcommand{\qsym}{q_{\mathrm{sym}}^{\exact}}
\newcommand{\qsymm}{\const{q}{sym}}
\newcommand{\thetamark}{\const{\theta}{mark}}
\newcommand{\Cmesh}{\const{C}{mesh}}
\renewcommand{\d}[1]{\,\mathrm{d}#1}
\let\div\relax
\DeclareMathOperator{\div}{div}
\title{Optimal complexity of goal-oriented adaptive FEM for nonsymmetric linear
elliptic PDEs}
\author{Philipp Bringmann\orcidlink{0000-0002-4546-5165}}
\author{Maximilian Brunner\orcidlink{0000-0003-0636-1491}}
\author{Dirk Praetorius\orcidlink{0000-0002-1977-9830}}
\author{Julian Streitberger\orcidlink{0000-0003-1189-0611}}
\address{TU Wien, Institute of Analysis and Scientific Computing, Wiedner Hauptstr. 8-10/E101/4, 1040 Vienna, Austria}
\email{philipp.bringmann@asc.tuwien.ac.at}
\email{maximilian.brunner@asc.tuwien.ac.at}
\email{dirk.praetorius@asc.tuwien.ac.at}
\email{julian.streitberger@asc.tuwien.ac.at \quad \texttt{(corresponding author)}}
\keywords{Goal-oriented adaptive finite element method, linear quantity of 
interest, iterative solver,
nonsymmetric partial differential equations, optimal convergence rates, optimal 
complexity}
\subjclass[2020]{41A25, 65N15, 65N30, 65N50, 65Y20}
\thanks{This research was funded by the Austrian Science Fund (FWF) projects 
	\href{https://www.fwf.ac.at/en/research-radar/10.55776/F65}{10.55776/F65} (SFB F65 ``Taming 
	complexity in PDE systems''), 
	\href{https://www.fwf.ac.at/en/research-radar/10.55776/I6802}{10.55776/I6802} (international 
	project I6802 ``Functional error estimates for PDEs on unbounded domains''), and 
	\href{https://www.fwf.ac.at/en/research-radar/10.55776/P33216}{10.55776/P33216} (standalone 
	project P33216 ``Computational nonlinear PDEs''). Additionally, Maximilian Brunner
		and Julian Streitberger are supported by the Vienna School of Mathematics.}
\begin{document}

\maketitle

\begin{abstract}
	We analyze a goal-oriented adaptive
	algorithm that aims to efficiently
	compute the quantity of interest \(G(u^\star)\) with a
	linear goal functional \(G\) and the solution
	\(u^\star\) to a general second-order nonsymmetric
	linear elliptic partial differential equation. The
	current state of the analysis of iterative algebraic
	solvers for nonsymmetric systems lacks the contraction
	property in the norms that are prescribed by the
	functional ana\-lytic setting. This seemingly prevents
	their application in the optimality analysis
	of goal-oriented adaptivity. As a remedy, this paper
	proposes a
	goal-oriented adaptive iteratively symmetrized finite
	element method (GOAISFEM). It employs a nested loop
	with
	a contractive symmetrization procedure, e.g., the
	Zarantonello iteration, and a contractive algebraic
	solver, e.g., an optimal multigrid solver.
	The various
	iterative procedures require well-designed stopping
	criteria such that the adaptive algorithm can
	effectively steer the local mesh refinement and the
	computation of the
	inexact discrete approximations. The main results
	consist of full linear convergence of the proposed
	adaptive algorithm and the proof of optimal
	convergence rates with respect to both degrees of
	freedom and total computational cost (i.e., optimal
	complexity). Numerical experiments confirm the
	theoretical results and investigate the selection of the
	parameters.
\end{abstract}


\section{Introduction}

Adaptive finite element methods (AFEMs) are a cornerstone in the numerical
solution of partial differential equations (PDEs). The abundant
literature
emphasizes significant progress and manifests a matured
understanding
of the
topic; see,
e.g.,~\cite{d1996, mns2000, bdd2004, stevenson2007, ckns2008,
	ks2011,cn2012,ffp2014,axioms}
for linear elliptic PDEs.

The variational formulation of a
\emph{nonsymmetric} second-order linear elliptic PDE
with bilinear form \(b(\cdot, \cdot)\)
and right-hand side functional \(F\)
on the Sobolev space \(\XX  \coloneqq H^1_0(\Omega)\)
seeks a weak solution \(u^\star\)
to
\begin{equation}\label{eq:intro:weak_formulation}
	b(u^\star, v) = F(v)
	\quad
	\text{for all}
	\quad
	v \in \XX.
\end{equation}
While standard AFEM aims at an
efficient approximation of the solution
\(u^\star \in \XX\),
goal-oriented AFEM (GOAFEM) strives only
to approximate a \emph{quantity of interest}
\(G(u^\star)\); see~\cite{br2001, br2003, eehj1995, gs2002} for
early prominent contributions.
However,
to accurately approximate $G(u^\star)$ for a
continuous linear \emph{goal functional}
\(G \colon \XX \to \R\),
following the generic approach  \(G(u_H) \approx G(u^\star)\)
leads to convergence rates
determined by the error of the approximation
\(u_H \approx u^\star\)
to the \emph{primal problem}~\eqref{eq:intro:weak_formulation}.
Instead, GOAFEM adopts a duality technique by
additionally approximating \(z_H \approx
z^\star \in \XX\)
solving the \emph{dual problem}
\begin{equation}\label{eq:intro:dual_problem}
	b(v, z^\star)
	=
	G(v)
	\quad \text{for all } v \in \XX.
\end{equation}
Following~\cite{gs2002},
a discrete approximation
\(G_H(u_H, z_H) \approx G(u^\star)\)
enables the control of the error for
any \(u_H, z_H \in \XX\) by
\begin{equation}\label{eq:intro:goal_error_identity}
	| G(u^\star) - G_H(u_H, z_H) |
	\le
	| b(u^\star - u_H, z^\star - z_H) |
	\le
	L \,
	\vvvert u^\star - u_H \vvvert\,
	\vvvert z^\star - z_H \vvvert,
\end{equation}
where $L>0$ is the continuity constant
of \(b(\cdot, \cdot)\) with respect to the energy norm~\(\vvvert \,
	\cdot \, \vvvert\); see Section~\ref{section:preliminaries} for details.
As seen in~\eqref{eq:intro:goal_error_identity}, this approach allows
to add the convergence rates
of the primal and dual problem.
Moreover, it is not necessary
--
and may even lead to unnecessary computational expense
--
to compute approximations
\(u_H \approx u^\star\)
and \(z_H \approx z^\star\)
across the entire domain with the same accuracy.
Instead, a
careful marking of elements for refinement
enables a considerable reduction of the
computational costs and makes GOAFEM
highly relevant in both practical applications
and mathematical research.

First rigorous convergence results of GOAFEM
are found in~\cite{ms2009, bet2011,
	fghpf2016, fpz2016, hp2016},
recent contributions
in this context
include~\cite{bip2021, bbimp2022} and for a
dual weighted-residual approach see,
e.g., \cite{elw2019, elw2020, dbr2021}.
The works
\cite{ms2009,fghpf2016,fpz2016,bip2021,bbimp2022}
focus on optimal convergence rates
with respect to the degrees of freedom.
However,
the cumulative nature of adaptivity calls
for optimal convergence rates with respect to
the total computational effort, i.e.,
the overall computational time.
Coined as \emph{optimal complexity}
initially for wavelet-based
discretizations~\cite{cdd2001, cdd2003},
this notion was later adopted for
AFEM with contributions including,
e.g.,
\cite{stevenson2007, cg2012,ghps2021, bhimps2023}.
In the setting of GOAFEM,
optimal complexity was established first in \cite{ms2009} for
the Poisson problem and sufficiently small adaptivity parameters, and extended
to a general second-order \emph{symmetric} linear elliptic PDE
with
uniformly contractive algebraic solver
in~\cite{bgip2023}.
Since uniform contraction with respect to the
PDE-related energy norm
for nonsymmetric algebraic solvers
such as GMRES is still open,
as a remedy,
the proof of the Lax--Milgram lemma
motivates the application
of an iterative symmetrization
\cite{bhimps2023}.
This results in a sequence of symmetric
algebraic systems that allow the application
of optimal algebraic solvers,
e.g.,~\cite{wz2017,cnx2012,imps2022}.
Figure~\ref{fig:algorithm_details} illustrates
the nested structure of the resulting
goal-oriented adaptive iteratively symmetrized finite
element method (GOAISFEM).
The detailed Algorithm~\ref{algorithm:afem}
is presented in Section~\ref{section:algorithm} below.
Table~\ref{fig:indices} displays the notation of the associated indices and
quasi-error
quantities, which are equivalent to the total error.

\begin{figure}
	\resizebox{0.75\textwidth}{!}{
		\begin{tikzpicture}[scale=1]
	\tikzstyle{afemarrow} = [very thick, color=lightgray, -stealth]
	\tikzstyle{dummynode} = [draw=none]
	\newcommand{\intermediate}[2]{($(#1)!0.5!(#2)$)}
	\pgfmathsetmacro{\pad}{0.1pt}
	\pgfmathsetmacro{\ydist}{0pt}
	\pgfdeclarelayer{background}
	\pgfsetlayers{background,main}
	\tikzstyle{module}[lightgray]=[draw,color=#1,fill=#1!50,very 
	thick,text=#1!0!black,align=center,rounded corners,minimum 
	width=15em,font=\bfseries]
	\tikzstyle{adaptivity}[lightgray]=[draw,color=#1,fill=#1!50,very 
	thick,text=#1!0!black,align=center,rounded corners,minimum 
	width=40em]
	\tikzstyle{primal}[lightgray]=[draw,very 
	thick,text=#1!0!black,align=center,rounded corners,minimum 
	width=22em]
	\tikzstyle{class}=[align=left,minimum width=14em,text 
	width=18em,font=\footnotesize\strut]
	\tikzstyle{connection}[lightgray]=[color=#1,very 
	thick,-{Diamond[length=0.7em,width=0.4em]}]
	\tikzstyle{surround}[lightgray]=[draw,color=#1,fill=#1!20,rounded corners,very thick]
	
	%
	\node[adaptivity=TUYellow!70!white] at (0,0) (M3) 
	{\textsc{Goal-oriented Adaptivity} 
		(\(\boldsymbol{\ell}\))};

	\node[adaptivity=codegray!70!white, below = of M3.south west, anchor=west,
	yshift=0.25cm] (M4) 
	{{\textsc{Solve \& Estimate}}};
	
	\node[below = of M4.mid, anchor=mid, align = center,
	yshift=0.3cm, xshift=-11em] (M5) 
	{{\bfseries {primal problem}}};
	
	\node[below= of M4.mid, align=center, anchor = mid, yshift=0.3cm, xshift=9em] 
	(M6) 
	{{\bfseries {dual problem}}};
	
	\node[below= of M4.mid, align=center, anchor = mid, yshift=0.3cm] 
	(M8) 
	{{\bfseries {(in parallel)}}};
	
	\node[module=TUblue!70!white,below = of M5.mid, anchor=mid] (M2) 
	{symmetrize (\(\boldsymbol{m}\))};
		
	\node[module=TUGreen!70!white, below=of M2.west, anchor=west, xshift=2em] (M1) 
	{solve (\(\boldsymbol{n}\))};
	\node[class,below=\pad of M1, xshift=4em] (C13) {computable 
		approximation \(u_{\ell}^{m, n}\) \\and
		estimator 
		\(\eta_{\ell}(u_\ell^{m,n})\)};
		
	\node[module=TUblue!70!white,below = of M6.mid, anchor=mid] (M7) 
	{symmetrize (\(\boldsymbol{\mu}\))};
	
	\node[module=TUGreen!70!white, below=of M7.west, anchor=west, xshift=2em] (M8) 
	{solve (\(\boldsymbol{\nu}\))};
	\node[class,below=\pad of M8,xshift=4em] (E13) {computable 
		approximation \(z_{\ell}^{\mu, \nu}\) \\ and estimator 
		\(\zeta_{\ell}(z_\ell^{\mu, \nu})\)};
		

	\node[adaptivity=TUred!70!white, below = of M4.south west, anchor=west, yshift=-3.5cm] (M5) 
	{\textsc{Mark}};
	\node[class,below=\pad of M5.south, xshift=0.5em] (F11) {apply Dörfler 
	marking variant from 
	\cite{fpz2016}};
	
	\coordinate (B5u) at ($(M4.south)+(0*\pad,-6*\pad)$);
	\coordinate (B5l) at ($(M5.north)+(0*\pad, 6*\pad)$);
	\draw[very thick] (B5u) -- (B5l);
	
	\node[adaptivity=darkgreen!70!white,below = of M5.south west, anchor=west, 
	yshift=-0.3cm] (M6) 
	{\textsc{Refine}};
	\node[class,below=\pad of M6,xshift=1em] (G11) {employ NVB \cite{stevenson2008}};
	%
	\coordinate[left=1.5em of M6] (D1);
	\coordinate[left=1.5em of M4] (D2);
	\coordinate[left=0.2em of M6] (D3);
	\coordinate[left=0.2em of M4] (D4);
	\draw[afemarrow, rounded corners] (D3) --(D1) -- (D2) -- (D4);
	%
	%
	\begin{pgfonlayer}{background}
		\coordinate (B3u) at ($(M3.north east)+(10*\pad,2*\pad)$);
		\coordinate (B3l) at ($(M3.south west |- G11.south)+(-10*\pad,-2*\pad)$);
		\draw[surround=TUYellow!100!white] (B3u) rectangle (B3l);
		\coordinate (B2u) at ($(M4.north east)+(0.7*\pad,1*\pad)$);
		\coordinate (B2l) at ($(M4.south west |- 
		M5.north)+(-0.7*\pad,2*\pad)$);
		\draw[surround=codegray!100!white] (B2u) rectangle (B2l);
		\coordinate (B1u) at ($(M5.north east)+(0.7*\pad,0.5*\pad)$);
		\coordinate (B1l) at ($(M5.west |- F11.south)+(-0.7*\pad,1*\pad)$);
		\draw[surround=TUred!100!white] (B1u) rectangle (B1l);
		
		\coordinate (B4u) at ($(M6.north east)+(0.7*\pad,0.5*\pad)$);
		\coordinate (B4l) at ($(M6.west |- G11.south)+(-0.7*\pad,1*\pad)$);
		\draw[surround=darkgreen!100!white] (B4u) rectangle (B4l);
	\end{pgfonlayer}
\end{tikzpicture}
	}
	\caption{Schematic overview of the GOAISFEM algorithm with
		nested symmetrization and inexact solver.\label{fig:algorithm_details}}
\end{figure}

\begin{table}[htpb!]
	\centering
	\resizebox{\columnwidth}{!}{
		\begin{tabular}{cccccccccc}
			\toprule %
			\multicolumn{1}{c}{iteration}&\multicolumn{2}{c}{mesh 
			refinement}&\multicolumn{2}{c}{symmetrization}
			&\multicolumn{2}{c}{algebraic solver} &\multicolumn{1}{c}{} &\\
			\cmidrule(r){1-2} 
			\cmidrule(r){2-3}\cmidrule(l){4-5}\cmidrule(l){6-7} & running & 
			final &
			running & final & running & final & index set &
			\multicolumn{1}{c}{quasi-error} \\\hline
			primal & $\ell$  & $\underline\ell$ & \(m\) & \(\underline{m}\)& 
			\(n\) & \(\underline{n}\) &  \(\QQ^u\) in \eqref{eq:primal_index_set} & \(\Eta_{\ell}^{m, n}\) in~\eqref{eq:quasi-error-primal}
			 \\
			dual & $\ell$ & $\elll$  & $\mu$ & $\mmu$ & \(\nu\) &
			\(\nnu\) & \(\QQ^z\) in \eqref{eq:dual_index_set} & \(\Zeta_{\ell}^{\mu, \nu}\) in \eqref{eq:quasi-error-dual}
			\\
			combined & \(\ell\) & \(\elll\) & \(k\) & \(\kk = \max\set{\mm, \mmu}\) & \(j\) & 
			\(\jj = \max \set{\nn, \nnu}\) & 
			\(\QQ = \QQ^u \cup \QQ^z\)&
			\(\Eta_{\ell}^{k, j} \, \Zeta_{\ell}^{k, j}\) 
			in~\eqref{eq:quasi-error-extension}
			\\
			\bottomrule
\end{tabular}

	}
	\vspace{0.4cm}
	\caption{Iteration counters and quasi-errors for the GOAISFEM algorithm.
		We note that for the combination of the index sets, the quasi-errors are extended to the full index
		set by the last available quasi-error. We refer to
		Section~\ref{section:algorithm} for
		details on the iteration counters and index sets and to the beginning of
		Section~\ref{section:main_results} for a detailed description of the quasi-errors and their
		extension to the full index set \(\QQ\).\label{fig:indices}
	}
\end{table}

The first challenge in the analysis of the GOAISFEM
algorithm consists of the nonlinear product structure
attained by the combined quasi-error product as displayed in
Table~\ref{fig:indices}. The resulting nonlinear remainder term
significantly complicates the proof compared to treating
only the primal problem as in \cite{bhimps2023} and requires the
application of a novel proof strategy from \cite{fps2023} that only
utilizes summability of the
remainder, denoted as
\textsl{tail-summability} throughout. The second
challenge arises from the combination of the primal and dual
marking leading to a merged marked set. Thereby, either only
the primal or only the dual estimator is guaranteed to
satisfy the estimator reduction property. Since the
estimator belongs to the quasi-error, this also leads to a
failure of contraction for one of the two involved
quasi-errors. While \cite{bgip2023} solves this issue in the
symmetric case, the additional symmetrization loop results
in a more involved situation at hand.
Adapting the novel
approach of the tail-summability criterion from \cite{fps2023}
enables the proof of full linear convergence and
optimal complexity for the nonlinear
quasi-error product in this paper.
The analysis employs the
generalized quasi-orthogonality from \cite{feischl2022}
to remedy the lack of a Pythagorean identity
for nonsymmetric problems.

Our main result asserts full linear convergence of
the quasi-error product
\(\Eta_{\ell}^{k, j} \, \Zeta_{\ell}^{k, j}\) with respect to the total step counter \(| \cdot, \cdot, \cdot |\)
(measuring the total solver steps in the index set).
Therein, we allow for an arbitrary symmetrization stopping
parameter \(\lamsym\) and only require a
small algebraic solver
parameter \(\lamalg\) such that the product \(\lamsym \, \lamalg\) is
sufficiently small.
More precisely, Theorem~\ref{lem:full_linear_convergence}
states that there exist constants \(\Clin
> 0\) and \(0 < \qlin < 1\) such that,
for all \((\ell, k, j), (\ell', k', j') \in \QQ\)
with \(| \ell', k', j'| \le |\ell, k, j|\),
\begin{equation*}
	\Eta_\ell^{k,j} \, \Zeta_\ell^{k, j}
	\le
	\Clin \, \qlin^{|\ell, k, j| - |\ell^\prime, k^\prime, j^\prime|} \,
	\Eta_{\ell^\prime}^{k^\prime,j^\prime} \,
	\Zeta_{\ell^\prime}^{k^\prime,j^\prime}.
\end{equation*}
Note that, unlike \cite{bhimps2023}, where
full linear convergence is guaranteed only for sufficiently large \(\ell \ge
\ell_0\), the current result is stronger in the sense that the result holds for
\(\ell_0 = 0\)
owing to a generalized
quasi-orthogonality from~\cite{feischl2022}.
An immediate consequence of full linear convergence and the
geometric series in Corollary~\ref{cor:rate_complexity} states that the rates
with respect to the degrees of
freedom coincide with the rates with respect to the cumulative computational
work (i.e., computational time),
i.e., for all \(r > 0\), there holds
\begin{equation*}
	\thinmuskip = -1mu
	\sup \limits_{
		\substack{
			(\ell, k, j) \in \QQ
		}
	}
	\bigl(\# \TT_{\ell}\bigr)^r \Eta_\ell^{k,j} \, \Zeta_\ell^{k,j}
	\le
	\sup \limits_{
		\substack{(\ell, k, j) \in \QQ
		}
	}
	\Bigl(
	\sum \limits_{
		\substack{
			(\ell^\prime, k^\prime, j^\prime) \in \QQ
			\\
			| \ell^\prime, k^\prime, j^\prime | \le | \ell, k, j |
		}
	}
	\# \TT_{\ell^\prime}
	\Bigr)^r
	\Eta_\ell^{k,j} \Zeta_\ell^{k,j}
	\le
	C_{\rm cost} \, \sup \limits_{
		\substack{
			(\ell, k, j) \in \QQ
		}
	}
	\bigl(\# \TT_{\ell}\bigr)^r \Eta_\ell^{k,j} \, \Zeta_\ell^{k,j}
\end{equation*}
along the sequence of meshes \(\TT_{\ell}\) generated by the GOAISFEM algorithm.
The second main result of Theorem~\ref{th:optimal_complexity}
proves that, for sufficiently small adaptivity parameters and any achievable
rates
\(s, t > 0\) of the primal resp.\ dual problem (stated in terms
of nonlinear approximation classes),
the algorithm guarantees optimal complexity, i.e.,
\begin{equation*}
	\sup_{
		\substack{
			(\ell,k, j) \in \QQ
		}
	}
	\Bigl(
	\sum_{\substack{
			(\ell^\prime, k^\prime, j^\prime) \in \QQ
			\\
			|\ell^\prime, k^\prime, j^\prime | \le | \ell, k, j
			|
		}
	}
	\#\TT_{\ell^\prime}
	\Bigr)^{s+t} \,
	\Eta_\ell^{k,j} \, \Zeta_\ell^{k, j}
	\le
	\Copt \,
	\max\{
	\| u^\star \|_{\A_s} \, \| z^\star \|_{\A_t}, \,
	\Eta_{0}^{0,0} \, \Zeta_{0}^{0,0}
	\}.
\end{equation*}
This means the convergence of the algorithm
attains the optimal rate \(s+t\)
with respect to the overall
computational work, where \(\| u^\star \|_{\A_s}
< \infty\) means that \(u^\star\) can be approximated at rate \(s\) (along a
sequence of
unavailable optimal meshes) and likewise for \(z^\star\).

The remaining parts of the paper are organized as follows.
The
preliminary Section~\ref{section:preliminaries} introduces
the model problem, the assumptions on the solvers, and the axioms
of adaptivity from~\cite{axioms},
including the general quasi-orthogonality
from \cite{feischl2022}.
Following the algorithm in
Section~\ref{section:algorithm} and its contraction properties in Section~\ref{section:aposteriori},
Section~\ref{section:main_results} presents full linear convergence as the first main result of this
paper. This allows to prove optimal complexity in Section~\ref{section:optimal_complexity} as the second main result, which is
underlined by
the numerical
experiments in Section~\ref{section:numerics} including a thorough
investigation of the adaptivity parameters. The paper concludes with a summary in Section~\ref{section:conclusion}.


\section{Setting}\label{section:preliminaries}
In this section, we introduce the problem and explain the key
components needed to design the
adaptive algorithm in Section~\ref{section:algorithm}.
%
\subsection{Continuous model problem}
Let \(\Omega \subset \R^d\) with \(d \ge 1\) be a polygonal Lipschitz domain.
Given right-hand sides
\(f \in L^2(\Omega)\) and \(\fvec \in [L^2(\Omega)]^d\), we consider a
general second-order linear elliptic PDE
\begin{equation}\label{eq:model_problem}
	-\div(\Amat \nabla u^\star) +  \bvec \cdot \nabla u^\star + c \, u^\star
	=
	f - \div(\fvec)
	\quad \text{in } \Omega
	\quad \text{subject to} \quad
	u^\star = 0 \quad \text{on } \partial \Omega,
\end{equation}
with a pointwise symmetric and positive definite diffusion matrix
\(\Amat \in \bigl[L^\infty(\Omega)\bigr]^{d \times d}_{\textup{sym}}\), a
convection coefficient \(\bvec \in \bigl[L^{\infty}(\Omega)\bigr]^d\), and a
reaction
coefficient \(c \in L^\infty(\Omega)\).
For well-definedness of the \textsl{a posteriori} error estimator
in Section~\ref{section:estimator} below, we additionally
require that
\(\Amat\vert_T \in \bigl[W^{1,
			\infty}(T)\bigr]^{d \times d}_{\textup{sym}}\) and \(\fvec\vert_T \in
\bigl[H^1(T)\bigr]^d\) for
all \(T \in \TT_{0}\), where
\(\TT_0\) is an initial triangulation that subdivides
\(\Omega\) into compact simplices.
Let \(\langle\, \cdot \,, \, \cdot \,\rangle\) denote the \(L^2(\Omega)\)-scalar
product.
With the principal part \(a(u,v) \coloneqq \langle \Amat
\nabla u, \nabla v \rangle\), the variational formulation of \eqref{eq:model_problem}
seeks a solution \(u^\star \in \XX \coloneqq H^1_0(\Omega)\) to
the so-called \emph{primal problem}
\begin{equation}\label{eq:weak_formulation}
	b(u^\star, v) \coloneqq a(u^\star, v) + \langle \bvec \cdot \nabla u^\star
	+ c \, u^\star, v \rangle
	=
	\langle f, v \rangle + \langle \fvec, \nabla v \rangle \eqqcolon F(v)
	\quad \text{for all } v \in \XX.
\end{equation}
We suppose that the bilinear form \(b(\cdot, \cdot)\)
from~\eqref{eq:weak_formulation} is continuous
and elliptic with respect to the norm
\(
\| \cdot \|_{\XX}
\)
on \(\XX\), i.e., there exist constants \(L^\prime,
\alpha^\prime > 0\) such that
\begin{equation}\label{eq:b_cont_elliptic}
	b(u,v)
	\le
	L^\prime \, \| u \|_{\XX} \| v \|_{\XX}
	\quad \text{and} \quad
	b(v,v)
	\ge
	\alpha^\prime \, \| v \|_{\XX}^2
	\quad \text{for all } u,v \in \XX.
\end{equation}
Then, the Lax--Milgram lemma proves existence and uniqueness of the solution
\(u^\star\) to~\eqref{eq:weak_formulation}.
An elementary compactness argument shows
that~\eqref{eq:b_cont_elliptic} implies
ellipticity of the principal part \(a(\, \cdot \, , \, \cdot \,)\) and thus
\(a(\, \cdot\,,\, \cdot\,)\) is a scalar
product on \(\XX\) with induced energy norm
\(
a(\, \cdot \,, \, \cdot \,)^{1/2}
\eqqcolon
\vvvert \, \cdot \, \vvvert
\simeq
\| \cdot \|_{\XX}
\), cf.~\cite[Remark~3]{bhp2017}.
Therefore, \(b(\,\cdot\,,\, \cdot\,)\) is also continuous and
elliptic with respect to \(\vvvert \, \cdot \, \vvvert\), i.e.,
there
exist constants \(L, \alpha > 0\) such that
\begin{equation}\label{eq:b_cont_elliptic_enorm}
	b(u,v)
	\le
	L \,  \vvvert u \vvvert \, \vvvert v \vvvert
	\quad \text{and} \quad
	b(v,v)
	\ge
	\alpha \, \vvvert v \vvvert^2
	\quad \text{for all } u,v \in \XX.
\end{equation}
In the present paper, we suppose that
the \emph{quantity of interest} \(G\) is linear
and reads for
given data \(g \in L^2(\Omega)\) and \(\gvec \in
\bigl[L^2(\Omega)\bigr]^d\),
\begin{equation*}
	G(v)
	\coloneqq
	\int \limits_\Omega
	\bigl(g \, v + \gvec \cdot \nabla v\bigr) \d{x}.
\end{equation*}
In order to guarantee well-definedness of the error estimator
in~Section~\ref{section:estimator} below, we
suppose \(\gvec \vert_T \in \bigl[H^1(T)\bigr]^d\) for all
initial simplices \(T
\in \TT_{0}\).
In view of the continuity and coercivity of
\(b(\, \cdot \,, \, \cdot \,)\), the Lax--Milgram lemma yields existence and
uniqueness of the solution
\(
z^\star \in \XX
\)
of the so-called \emph{dual problem}: Find \(z^\star \in \XX\) such that
\begin{equation}\label{eq:dual_problem}
	b(v, z^\star)
	=
	G(v)
	\quad \text{for all } v \in \XX.
\end{equation}

\subsection{Finite element discretization and discrete goal}\label{subsec:fem}
For a polynomial degree \(p \in \N\) and a conforming
simplicial
triangulation
\(\TT_H\) of $\Omega$, the discrete ansatz space
reads
\begin{equation}\label{eq:fem_space}
	\XX_H
	\coloneqq
	\{
	v_{H} \in \XX \colon
	\forall \, T \in \TT_H, \
	v_{H}|_T \
	\text{is a polynomial of total degree}
	\le p
	\}.
\end{equation}
Since \(\XX_H \subset \XX \) is conforming, the Lax--Milgram
lemma ensures the existence and uniqueness of primal and dual
discrete solutions \(u_H^\star\), \(z_H^\star \in
\XX_H\)
satisfying
\begin{equation}\label{eq:discrete_formulation}
	b(u_H^\star, v_H)
	=
	F(v_H)
	\quad \text{and} \quad
	b(v_H, z_H^\star)
	=
	G(v_H)
	\quad \text{for all } v_H \in \XX_H.
\end{equation}
It is well-known that conforming FEMs are quasi-optimal, i.e., there
hold Céa-type estimates with constant
\(\Ccea =L / \alpha\)
\begin{equation}\label{eq:cea}
	\vvvert u^\star - u_H^\star \vvvert
	\le
	\Ccea \,\min_{v_H \in \XX_H} \vvvert u^\star -v_H \vvvert
	\quad \text { and } \quad
	\vvvert z^\star - z_H^\star \vvvert
	\le
	\Ccea \,\min_{v_H \in \XX_H}
	\vvvert z^\star -v_H \vvvert.
\end{equation}
For arbitrary approximations
\(
u_H, z_H,
\in \XX_H
\)
the linearity of the quantity of interest \(G\)
as well as the primal and the dual problem
\eqref{eq:intro:weak_formulation}
and \eqref{eq:intro:dual_problem}
show that
\begin{align*}
	G(u^\star) - G(u_H)
	=
	G(u^\star - u_H)
	 & \eqreff*{eq:intro:dual_problem}=
	b(u^\star - u_H, z^\star)
	\\
	 & \eqreff*{eq:intro:weak_formulation}=
	b(u^\star - u_H, z^\star - z_H)
	+
	\bigl[F(z_H) - b(u_H, z_H)\bigr].
\end{align*}
The definition of the discrete goal quantity by
\(
G_H(u_H, z_H)
\coloneqq
G(u_H)
+
\bigl[F(z_H) - b(u_H, z_H)\bigr]
\)
allows to control the goal error by continuity of \(b(\cdot, \cdot)\)
\begin{equation}\label{eq:goal_error_identity}
	| G(u^\star) - G_H(u_H, z_H) |
	\le
	| b(u^\star - u_H, z^\star - z_H) |
	\le
	L \, \vvvert u^\star - u_H \vvvert \vvvert z^\star - z_H \vvvert.
\end{equation}
We emphasize that \eqref{eq:goal_error_identity} holds for any
\(
u_H, z_H
\)
and, in particular, for those stemming from an iterative solution
step. Moreover, if \(u_H = u_H^\star\), then \(G(u_H, z_H) =
G(u_H^\star)\) as expected.

\subsection{Zarantonello iteration}
The discrete formulations~\eqref{eq:discrete_formulation} lead to
positive definite, but
\emph{nonsymmetric} linear systems of equations. To reduce the formulation to
symmetric and positive definite (SPD) problems, we follow previous
own work \cite{bhimps2023} for the primal problem and employ the Zarantonello
iteration~\cite{zarantonello1960}. Typically, the
latter is used in the up-to-date proof of the Lax--Milgram lemma and also defines a
linearization
scheme for the treatment of a certain class of nonlinear
elliptic
PDEs (see, e.g., \cite{cw2017, ghps2018, hpsv2021,fps2023}). In
its core, it is a fixed-point method, thus
also applicable in
the nonsymmetric setting at hand. For a
damping parameter
\(\delta > 0\) and given \(u_H, z_H \in \XX_H\),
the Zarantonello iterations \(\Phi_{H}^u,
\Phi_{H}^z \colon (0,
\infty) \times \XX_H \to \XX_H\) compute
the unique solutions \(\Phi_H^{u}(\delta;
u_H)\),
\(\Phi_H^{z}(\delta; z_H) \in \XX_H\)
to the symmetric variational formulations
\begin{subequations}\label{eq:Zarantonello-iterations}
	\begin{alignat}{2}
		a(\Phi_H^{u}(\delta; u_H), v_H)
		 & =
		a(u_H, v_H)
		+
		\delta \, \bigl[F(v_H) - b(u_H, v_H)\bigr]
		\quad
		 &   & \text{for all} \quad v_H \in \XX_H,
		\\
		a(v_H, \Phi_H^{z}(\delta; z_H))
		 & =
		a(v_H, z_H)
		+
		\delta \, \bigl[G(v_H) - b( v_H, z_H)\bigr] \quad
		 &   & \text{for all} \quad v_H \in \XX_H.
	\end{alignat}
\end{subequations}
The Riesz--Fischer theorem (and also the Lax--Milgram lemma)
guarantees existence and uniqueness of
$\Phi_H^{u}(\delta; u_H)$, $\Phi_H^{z}(\delta; z_H) \in
	\XX_H$, i.e.,
the Zarantonello operators
\(\Phi_H^{u}(\delta; \cdot)\) and
\(\Phi_H^{z}(\delta;
\cdot)\) are well-defined.
In particular, the exact discrete solutions $u_{H}^\star
	= \Phi_{H}^{u}(\delta; u_{H}^\star)$ and
$z_{H}^\star = \Phi_{H}^{z}(\delta; z_{H}^\star)$ are the
unique fixed points for
all $\delta > 0$.
Moreover, for a sufficiently small damping parameter \(\delta\),
i.e.,
\(
0 < \delta < \delta^\star \coloneqq 2 \alpha/L^2,
\)
the Banach fixed-point theorem \cite[Section 25.4]{zeidler1990} guarantees that
\(\Phi_H^{u}(\delta, \cdot)\) and \(\Phi_H^{z}(\delta, \cdot)\) are
contractive with constant \(0 <
\qsym \coloneqq \bigl[1- \delta \, (2 \alpha - \delta L^2)\bigr]^{1/2} <
1\), i.e., for all functions
\(v_H, w_H \in \XX_H\),
it holds that
\begin{align}\label{eq:Zarantonello-contraction}
	\max
	\bigl\{
	\vvvert \Phi_H^{u}(\delta; v_H) - \Phi_H^{u}(\delta; w_H) \vvvert,
	\vvvert\Phi_H^{z}(\delta; v_H) - \Phi_H^{z}(\delta;
	w_H) \vvvert
	\bigr\}
	\le
	\qsym \, \vvvert v_H - w_H \vvvert.
\end{align}
The optimal value \(\delta_{\mathrm{opt}} = \alpha / L^2\)
yields the
minimal contraction value \(\qsym = 1 - \alpha^2 / L^2\).
\subsection{Algebraic solver}
A canonical candidate for
solving~\eqref{eq:discrete_formulation} directly
is a generalized minimal
residual method~\cite{saad2003, ss1986} with optimal preconditioner for the
symmetric part. While this guarantees uniform
contraction of the algebraic residuals in a \emph{discrete vector norm},
the link between the algebraic
residuals and the functional setting
is still open \cite{bhimps2023}.
Instead, after a symmetrization with the Zarantonello iteration, it
remains to solve the SPD systems~\eqref{eq:Zarantonello-iterations}.
Since large SPD problems are still computationally expensive
and the exact solution cannot be computed in linear
computational complexity,
we employ an
iterative algebraic solver whose iteration is expressed by the
operator
\(
\Psi_H \colon \XX^\prime \times \XX_H \to \XX_H.
\)
More precisely, given a bounded linear functional \(\psi \in
\XX^\prime\) and an approximation
\(
w_H \in \XX_H
\)
of the exact solution \(w_H^\star \in \XX_H\) to
\(
a(w_H^\star, v_H)
=
\psi(v_H)
\)
for all \(v_H \in \XX_H\),
the algebraic solver returns an improved approximation \(\Psi_H(\psi;
w_H) \in \XX_H\) in the sense
that there exists \(0 < \qalg < 1\) independent of \(\psi\) and \(\XX_H\) such that
\begin{equation}\label{eq:algebra_contraction}
	\vvvert w_H^\star - \Psi_H(\psi; w_H) \vvvert
	\le
	\qalg \, \vvvert w_H^\star - w_H \vvvert
	\quad
	\text{for all}
	\quad w_H \in \XX_H.
\end{equation}
To simplify notation, we shall
identify \(\psi\) with its Riesz representative
\(w_H^\star \in \XX_H\) and write
\(
\Psi_H(w_H^\star; \cdot)
\)
instead of $\Psi_H(\psi; \cdot)$, even though
$w_H^\star$ is
unknown in
practice and will only be approximated by an optimal algebraic
solver,
e.g., \cite{cnx2012, wz2017, imps2022}.
In the following, we use the \(hp\)-robust multigrid method from \cite{imps2022} with localized lowest-order smoothing on
intermediate levels and patchwise higher-order smoothing on the finest mesh
as an innermost algebraic solver loop.

\subsection{Mesh refinement}
The mesh
refinement employs newest-vertex bisection (NVB). We refer to
\cite{stevenson2008} for NVB with admissible initial triangulation
$\TT_0$ and \(d \ge 2\), to
\cite{AFFKP13, kpp2013} for NVB with general \(\TT_0\) for \(d
\in \{1,2\}\), and to the
recent work \cite{dgs2023} for NVB with general $\TT_0$ in any dimension
\(d \ge 2\). For each
triangulation \(\TT_H\) and marked elements
\(\MM_H \subseteq \TT_H\), let
\(
\TT_h \coloneqq \texttt{refine}(\TT_H, \MM_H)
\)
be the coarsest conforming refinement of \(\TT_H\) such that
at least all \(T \in \MM_H\)
have been refined, i.e.,
\(\MM_H  \subseteq \TT_H \setminus \TT_h\). We write
\(
\TT_h \in \T(\TT_H)
\)
if \(\TT_h\) can be obtained from \(\TT_H\) by finitely many steps of
NVB, and
\(\TT_h \in \T_N(\TT_H)\) if \(\TT_h \in
\T(\TT_H)\) with
\(
\# \TT_h - \# \TT_H \le N
\)
with the number of additional elements
\(
N \in \N_0
\).
To simplify notation, we write \(\T \coloneqq \T(\TT_0)\)
and \(\T_N \coloneqq
\T_N(\TT_{0})\). We note that the nestedness of meshes
\(\TT_h \in \T(\TT_H)\) implies nestedness of the corresponding
finite element spaces
\(
\XX_H \subseteq \XX_h \subset \XX
\)
from \eqref{eq:fem_space}.

\subsection{\textsl{A posteriori} error estimation}
\label{section:estimator}
For a triangle \(T \in \TT_H \in \T\) and
\(v_H \in
\XX_H\),  let \(\boldsymbol{n}\) denote the outer unit normal
vector and \(\jump{\, \cdot \,}\) the jump
along
inner edges of \(\TT_{H}\).
We define the refinement indicators
\(
\eta_H(T; v_H) \ge 0
\)
and \(\zeta_H(T; v_H) \ge 0\) for the primal and dual
problem from~\eqref{eq:discrete_formulation}, respectively, by
\begin{subequations}\label{eq:estimators}
	\begin{equation}
		\begin{aligned}
			\eta_{H}(T; v_H)^2
			 & \coloneqq
			| T |^{2/d} \, \Vert
			-\div(\Amat \nabla v_H - \fvec) + \bvec \cdot \nabla
			v_H + c\, v_H - f
			\Vert_{L^2(T)}^2
			\\
			 & \qquad
			+ | T |^{1/d} \, \Vert
			\jump{\bigl(\Amat \nabla v_H - \fvec\bigr)
				\cdot
				\boldsymbol{n}}
			\Vert_{L^2(\partial T \cap \Omega)}^2,
			\\
			\zeta_{H}(T; v_H)^2
			 & \coloneqq
			| T |^{2/d} \, \Vert
			-\div(\Amat \nabla v_H - \gvec) - \bvec \cdot \nabla
			v_H +
			\bigl(c - \div(\bvec)\bigr)\, v_H - g
			\Vert_{L^2(T)}^2
			\\
			 & \qquad
			+ | T |^{1/d} \, \Vert
			\jump{\bigl(\Amat \nabla v_H - \gvec\bigr) \cdot
				\boldsymbol{n}}
			\Vert_{L^2(\partial T \cap \Omega)}^2.
		\end{aligned}
	\end{equation}
	For any subset \(\UU_H \subseteq \TT_H\), we abbreviate
	\begin{equation}\label{eq:estimator_2}
		\eta_H(\UU_H; v_H)^2
		\coloneqq
		\sum \limits_{T \in \UU_H} \eta_{H}(T; v_H)^2
		\quad \text{and} \quad
		\zeta_H(\UU_H; v_H)^2
		\coloneqq
		\sum \limits_{T \in \UU_H} \zeta_{H}(T; v_H)^2
	\end{equation}
	as well as
	\(
	\eta_H(v_H)
	\coloneqq
	\eta_H(\TT_H; v_H)
	\)
	and
	\(
	\zeta_H(v_H)
	\coloneqq
	\zeta_H(\TT_H; v_H)
	\)
	for all \(v_H \in \XX_H\).
\end{subequations}
For details on residual-based error estimators, we refer to~\cite{ao2000, verfuerth1994}.
Throughout the paper, the index of the estimators refer to the underlying mesh,
e.g., \(\eta_h\) and \(\zeta_h\) on the refinement \(\TT_h \in \T(\TT_H)\)
or \(\eta_\ell\) and \(\zeta_\ell\) on a sequence of meshes \(\TT_\ell\) with \(\ell \in \N_0\).
It is well-known that $\eta_H,
	\zeta_H$ satisfy the
following \emph{axioms of adaptivity}.
\begin{lemma}[{\cite[Section~6.1]{axioms}}]
	The error estimators \(\eta_{H}, \zeta_{H}\) from~\eqref{eq:estimators}
	satisfy the following properties with constants $\Cstab, \Crel, \Cdrel,
		\Cmon> 0$ and $0 < \qred < 1$ for any
	triangulation $\TT_H \in \T$ and any conforming refinement $\TT_h
		\in\T(\TT_H)$
	with the corresponding Galerkin solutions $u_H^\star, z_H^\star
		\in \XX_H$, $u_h^\star, z_h^\star \in
		\XX_h$ to~\eqref{eq:discrete_formulation},
	any subset \(\UU_H \subseteq \TT_H \cap \TT_h\),
	and arbitrary $v_H \in \XX_H$, $v_h \in \XX_h$.
	\begin{enumerate}[font=\upshape, label=\textbf{\textrm{(A\arabic*)}}, ref=A\arabic*]
		\item
		      \label{axiom:stability}
		      \textbf{stability:}
		      \(
		      \vert \eta_h (\UU_H; v_h) -
		      \eta_H(\UU_H; v_H) \vert
		      +
		      \vert \zeta_h (\UU_H; v_h) -
		      \zeta_H(\UU_H; v_H) \vert
		      \le
		      \Cstab \, \vvvert v_h - v_H \vvvert.
		      \)
		\item \textbf{reduction:}
		      \label{axiom:reduction}
		      \(\eta_h(\TT_h \setminus \TT_H; v_H)
		      \le
		      \qred \, \eta_H(\TT_H \setminus \TT_h; v_H)\)
		      and
		      \(\zeta_h(\TT_h \setminus \TT_H; v_H)
		      \le
		      \qred \zeta_H(\TT_H \setminus \TT_h; v_H)
		      \).

		\item
		      \label{axiom:reliability}
		      \textbf{reliability:}
		      \(
		      \vvvert u^\star - u_H^\star \vvvert
		      \le
		      \Crel \, \eta_H(u_H^\star)
		      \)
		      and
		      \(\vvvert z^\star - z_H^\star \vvvert
		      \le
		      \Crel \, \zeta_H(z_H^\star)\).
		      \renewcommand{\theenumi}{A3$^{+}$}
		\item[\textbf{(A3$^{\boldsymbol{+}}$)}]\refstepcounter{enumi}
		      \label{axiom:discrete_reliability}
		      \textbf{discrete reliability:}
		      \(\vvvert u_h^\star - u_H^\star \vvvert
		      \le
		      \Cdrel \, \eta_H(\TT_H \backslash \TT_h, u_H^\star)\)
		      and
		      \(\vvvert z_h^\star - z_H^\star \vvvert
		      \le
		      \Cdrel \, \zeta_H(\TT_H \backslash \TT_h, z_H^\star)\)
		      .
		      \renewcommand{\theenumi}{QM}
		\item[\textbf{(QM)}]\refstepcounter{enumi}
		      \label{axiom:qm}
		      \textbf{quasi-monotonicity:}
		      $\eta_h(u_h^\star) \le \Cmon \,
			      \eta_H(u_H^\star)$ and
		      \(\zeta_h(z_h^\star) \le \Cmon \,
		      \zeta_H(z_H^\star)\).
	\end{enumerate}
	The constant $\Crel$ depends only on the uniform
	\(\gamma\)-shape
	regularity of
	all
	$\TT_H \in \T$ and on
	the space dimension $d$, while $\Cstab$ and
	\(\Cdrel\) additionally depend on the polynomial degree
	$p$.
	For NVB, reduction~\eqref{axiom:reduction} holds with
	$\qred \coloneqq 2^{-1/(2d)}$.
	Moreover,
	the constant in
	quasi-monotonicity~\eqref{axiom:qm}
	satisfies
	$\Cmon \le
		\min \{ 1 + \Cstab(1+\Ccea)\Crel \, , \, 1 + \Cstab \, \Cdrel\}$.
	\qed
\end{lemma}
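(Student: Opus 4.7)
The plan is to reduce both claims (for $\eta_H$ and for $\zeta_H$) to the standard residual-based analysis for second-order linear elliptic PDEs, as carried out in \cite[Section~6]{axioms}. The key observation is that the dual bilinear form $b(\,\cdot\,, z)$, via integration by parts, corresponds to the formally adjoint strong operator
\(-\div(\Amat \nabla z) - \bvec \cdot \nabla z + (c - \div(\bvec))\, z = g - \div(\gvec),\)
which has exactly the same structure as the primal operator in~\eqref{eq:model_problem} (symmetric diffusion $\Amat$, bounded convection $-\bvec$, bounded reaction $c - \div(\bvec)$, data in $L^2$). In particular, $b(v,w)$ is continuous and elliptic with respect to $\vvvert\cdot\vvvert$ with the same constants $L, \alpha$ as the primal form by~\eqref{eq:b_cont_elliptic_enorm}, since both norms are symmetric in their arguments and $a(\,\cdot\,,\,\cdot\,)$ is symmetric. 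Hence $\zeta_H$ is literally an instance of the primal estimator applied to the adjoint problem.

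First, I would verify well-definedness: the elementwise regularity $\Amat\vert_T \in [W^{1,\infty}(T)]^{d\times d}_{\textup{sym}}$, $\fvec\vert_T, \gvec\vert_T \in [H^1(T)]^d$, together with $\bvec, c \in L^\infty(\Omega)$ and $f, g \in L^2(\Omega)$, ensures that the volume residuals lie in $L^2(T)$ and the fluxes $(\Amat\nabla v_H - \fvec)\cdot\boldsymbol{n}$, $(\Amat\nabla v_H - \gvec)\cdot\boldsymbol{n}$ have well-defined $L^2$ traces across interior faces. Next, \eqref{axiom:stability} follows from elementwise inverse estimates and the scaled trace inequality applied to the difference $v_h - v_H$, exactly as in \cite[Proposition~3.3]{axioms}; the constants depend on $\gamma$-shape regularity and the polynomial degree $p$. \eqref{axiom:reduction} is a direct consequence of the NVB size contraction $|T'|\le |T|/2$ for each refined child, yielding the scale factor $\qred = 2^{-1/(2d)}$.

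For \eqref{axiom:reliability} and \eqref{axiom:discrete_reliability}, I would apply the standard Galerkin orthogonality $b(u^\star - u_H^\star, v_H) = 0$ (respectively $b(v_H, z^\star - z_H^\star) = 0$) together with a Scott--Zhang quasi-interpolation $J_H$, elementwise integration by parts, and the Cauchy--Schwarz inequality, as in \cite[Section~6.1]{axioms}. Discrete reliability additionally exploits the localization: for $v_h \in \XX_h$ the quasi-interpolant $J_H v_h$ agrees with $v_h$ on elements in $\TT_H \cap \TT_h$, which localizes the residual analysis to $\TT_H \setminus \TT_h$. The dual version is obtained mutatis mutandis by using the adjoint Galerkin orthogonality.

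Finally, \eqref{axiom:qm} is derived by combining the already-proved axioms: writing
\[
\eta_h(u_h^\star) \le \eta_h(u_H^\star) + \Cstab \vvvert u_h^\star - u_H^\star\vvvert,
\]
then bounding $\eta_h(u_H^\star) \le \eta_H(u_H^\star)$ (since $v_H$ is unchanged and the local indicators only get smaller under refinement) and $\vvvert u_h^\star - u_H^\star\vvvert$ either via Céa~\eqref{eq:cea} plus reliability~\eqref{axiom:reliability}, giving $1 + \Cstab(1+\Ccea)\Crel$, or directly via discrete reliability~\eqref{axiom:discrete_reliability}, giving $1 + \Cstab\Cdrel$; the dual bound is identical. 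I do not expect serious obstacles: the entire proof is bookkeeping around the fact that the dual problem is formally adjoint with identical functional-analytic constants, so the machinery of \cite{axioms} applies verbatim, and the only subtle point is checking that the coefficient regularity is inherited correctly (in particular, $\div(\bvec)$ need not be bounded, but it only appears inside an $L^2$ residual whose value is evaluated against $v_H \in \XX_h$, so $\div(\bvec)\, v_H \in L^2(T)$ follows from $\bvec \in [L^\infty(\Omega)]^d$ and $v_H|_T$ polynomial).
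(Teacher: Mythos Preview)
The paper does not prove this lemma at all: it is stated with a trailing \qed and attributed entirely to \cite[Section~6.1]{axioms}. Your sketch is the standard argument from that reference (inverse/trace estimates for \eqref{axiom:stability}, NVB volume halving for \eqref{axiom:reduction}, Scott--Zhang interpolation with Galerkin orthogonality for \eqref{axiom:reliability}--\eqref{axiom:discrete_reliability}, and the combination you wrote for \eqref{axiom:qm}), and your observation that the dual problem is structurally identical to the primal one is exactly the right reduction.

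One point does not go through as you wrote it. In your final parenthetical you claim that $\div(\bvec)\,v_H \in L^2(T)$ follows from $\bvec \in [L^\infty(\Omega)]^d$ and $v_H|_T$ polynomial. That is false: $L^\infty$ regularity of $\bvec$ gives no control whatsoever on $\div(\bvec)$, and multiplying a distribution by a smooth function does not produce an $L^2$ function. For the dual volume residual in $\zeta_H$ to be well-defined one genuinely needs extra piecewise regularity of $\bvec$, e.g., $\bvec|_T \in [W^{1,\infty}(T)]^d$ (as the paper assumes for $\Amat$) or at least $\div(\bvec)|_T \in L^2(T)$. This is a standard implicit assumption in the goal-oriented literature that the paper does not spell out; you correctly flagged it as the subtle point, but the justification you gave is wrong and should be replaced by an explicit regularity hypothesis on $\bvec$.
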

Reliability~\eqref{axiom:reliability} and stability~\eqref{axiom:stability}
verify
\begin{align*}
	\vvvert u^\star - u_H \vvvert
	 & \le
	\max \{\Crel, 1 + \Cstab \, \Crel\} \, \bigl[ \eta_H(u_H) +
		\vvvert u_H^\star
		- u_H \vvvert\bigr],
	\\
	\vvvert z^\star - z_H \vvvert
	 & \le
	\max \{\Crel,  1 + \Cstab \, \Crel\} \, \bigl[ \zeta_H(z_H) +
		\vvvert z_H^\star - z_H \vvvert \bigr].
\end{align*}
In combination with the
estimate~\eqref{eq:goal_error_identity}, we finally conclude
for
\(
C_{\mathrm{goal}} \coloneqq L \max \{\Crel,  1 + \Cstab \, \Crel\}^2
\)
the reliable
\emph{goal-error estimate}
\begin{equation}\label{eq:goalError:Estimate}
	| G(u^\star) - G_H(u_H, z_H) |
	\le
	C_{\mathrm{goal}} \, \bigl[ \eta_H(u_H) +
		\vvvert u_H^\star
		- u_H \vvvert \bigr]
	\,
	\bigl[ \zeta_H(z_H) + \vvvert z_H^\star - z_H \vvvert
		\bigr],
\end{equation}
which provides the core estimate of the proposed adaptive algorithm
in Section~\ref{section:algorithm} below.

The ellipticity of \(b(\cdot, \cdot)\)
from~\eqref{eq:b_cont_elliptic_enorm} ensures
\(\inf\)-\(\sup\) stability
of the elliptic problem at hand.
Recall from~\cite{feischl2022}
that \(\inf\)-\(\sup\) stability implies
the generalized quasi-orthogonality, which
will be an important tool in the subsequent analysis.
\begin{proposition}[validity of quasi-orthogonality
	{\cite[Equation~(8)]{feischl2022}}]
	For any sequence $\XX_\ell \subseteq \XX_{\ell+1} \subset \XX$ of
	nested discrete subspaces with {\(\ell \ge 0\)},
	there holds
	\begin{enumerate}[font=\upshape, label=\textbf{\textrm{(A\arabic*)}},
			ref=A\arabic*]
		\setcounter{enumi}{3}
		\item \label{axiom:orthogonality}
		      \textbf{quasi-orthogonality:} There exist constants \(C_{\mathrm{orth}} > 0\) and \(0< \delta <
		      1\)
		      such that the corresponding Galerkin
		      solutions $u_\ell^\star, z_\ell^\star \in \XX_\ell$
		      to~\eqref{eq:discrete_formulation} satisfy,
		      for all \(\ell, M \in \N_0\),
		      \begin{subequations}\label{eq:quasi_orthogonality}
			      \begin{align}
				      \sum_{\ell' = \ell}^{\ell + M} \,
				      \vvvert u^\star_{\ell'+1} - u^\star_{\ell'} \vvvert^2
				       & \le
				      C_{\mathrm{orth}} \, (M + 1)^{1-\delta} \,
				      \vvvert u^\star - u^\star_\ell \vvvert^2,
				      \\
				      \sum_{\ell' = \ell}^{\ell + M} \,
				      \vvvert z^\star_{\ell'+1} - z^\star_{\ell'} \vvvert^2
				       & \le
				      C_{\mathrm{orth}} \, (M+1)^{1-\delta} \,
				      \vvvert z^\star - z^\star_\ell \vvvert^2.
			      \end{align}
		      \end{subequations}
	\end{enumerate}
	The constants $C_{\mathrm{orth}}$ and \(\delta\) depend only on the dimension $d$, the
	elliptic bilinear form $b(\, \cdot \,, \, \cdot \,)$, and the chosen norm
	$\vvvert \, \cdot \, \vvvert$, but are independent of the spaces
	$\XX_\ell$.
	\qed
\end{proposition}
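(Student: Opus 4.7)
The statement is explicitly flagged as a recall from \cite{feischl2022}, so the plan is to verify that the hypotheses of the abstract quasi-orthogonality theorem in that reference hold in our setting and then invoke it for the two equations. Concretely, I would first observe that continuity and ellipticity of the bilinear form $b(\cdot,\cdot)$ with respect to $\vvvert\cdot\vvvert$ (established in~\eqref{eq:b_cont_elliptic_enorm}) yield the standard $\inf$-$\sup$ stability on $\XX$, which transfers uniformly to every conforming subspace $\XX_\ell \subset \XX$ with the same constants $L$ and $\alpha$. The nestedness $\XX_\ell \subseteq \XX_{\ell+1}$ combined with the Galerkin identity~\eqref{eq:discrete_formulation} delivers $b(u^\star - u^\star_{\ell'}, v_\ell) = 0$ for all $v_\ell \in \XX_\ell$ whenever $\ell \le \ell'$, which is the precise orthogonality hypothesis needed by~\cite[Equation~(8)]{feischl2022}.

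For the primal inequality, I would apply Feischl's abstract result directly to $b(\cdot,\cdot)$ with the Galerkin sequence $(u_\ell^\star)_\ell$, obtaining constants $C_{\mathrm{orth}}, \delta$ depending only on $\alpha$, $L$, and the dimension $d$. For the dual inequality, I would introduce the transposed form $\tilde b(v,w) := b(w,v)$; continuity and ellipticity of $\tilde b$ with respect to $\vvvert\cdot\vvvert$ follow from the analogous properties of $b$ with identical constants, and the dual problem~\eqref{eq:dual_problem} rewrites as $\tilde b(z^\star, v) = G(v)$, so that $z_\ell^\star$ is simply the $\tilde b$-Galerkin projection of $z^\star$. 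Applying Feischl's theorem to $\tilde b$ then produces the second bound with the same constants (up to renaming). Writing both estimates simultaneously with a single pair $(C_{\mathrm{orth}}, \delta)$ is then a matter of taking the larger constant.

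The genuinely hard step is the abstract result of \cite{feischl2022} itself, which is imported as a black box. In the symmetric case one would simply telescope using Pythagoras to get $\sum_{\ell'} \vvvert u_{\ell'+1}^\star - u_{\ell'}^\star\vvvert^2 \le \vvvert u^\star - u_\ell^\star\vvvert^2$, i.e.\ the sharp bound with exponent $1-\delta = 0$. In the nonsymmetric situation Pythagoras fails, and Feischl's argument replaces it by combining the $\inf$-$\sup$ characterization of the Galerkin error with a compactness/interpolation step applied to the skew-symmetric part $b(\cdot,\cdot) - a(\cdot,\cdot)$; this is what forces the weakened sub-linear prefactor $(M+1)^{1-\delta}$ with some $\delta \in (0,1)$, instead of a uniform constant. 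Within the present paper, however, I would not redo this analysis but simply cite it and emphasize that the constants depend only on $b$ and $\vvvert\cdot\vvvert$ (and not on the particular nested sequence $\XX_\ell$), so that the bound can be used uniformly along the adaptively generated meshes in the subsequent full-linear-convergence and complexity proofs.
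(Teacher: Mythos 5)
Your proposal is correct and matches the paper's treatment: the paper likewise notes that ellipticity of \(b(\cdot,\cdot)\) with respect to \(\vvvert\cdot\vvvert\) yields \(\inf\)-\(\sup\) stability and then imports the quasi-orthogonality of \cite[Equation~(8)]{feischl2022} as a black box for both the primal and the dual (transposed) Galerkin sequences, with constants independent of the nested spaces \(\XX_\ell\). Your aside on the internal mechanism of Feischl's argument is not load-bearing (and the actual proof there proceeds via an algebraic factorization argument rather than a compactness/interpolation step), so it does not affect the validity of your reduction.
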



\section{Adaptive algorithm} \label{section:algorithm}
In this section, we introduce our goal-oriented adaptive
iteratively symmetrized algorithm. It utilizes specific stopping
indices
denoted by an underline, e.g., \(\elll, \mm[\ell], \nn[\ell,k] \in
\N_0\). For an overview, see
Table~\ref{fig:indices} above.
However, we may omit the dependence whenever it is apparent from the
context, such
as in the abbreviation \(\nn \coloneqq \nn[\ell, m]\) for \(u_\ell^{m, \nn}\).
\begin{algorithm}[GOAISFEM]
	~\label{algorithm:afem}
	{\bfseries Input:}
	Initial mesh $ \TT_0$, polynomial degree $p \in \N$,
	marking parameters $0 < \theta \le 1$,
	$C_{\rm mark} \ge 1$, solver parameters $\lamsym > 0$, \(\lamalg > 0\),
	Zarantonello damping parameter \(\delta > 0\), and
	initial guesses
	\(
	u_0^{0,0}
	=
	u_0^{0, \nn},
	\)
	\(
	z_0^{0,0}
	=
	z_0^{0, \underline{\nu}}
	\in \XX_0.
	\)

	\noindent
	{\bfseries Adaptive loop:} For all $\ell = 0, 1, 2, \dots$, repeat the following steps~\ref{alg:solve_primal}--\ref{alg:refine}:
	\begin{enumerate}[label = (\Roman*),
			ref = {\normalfont(\Roman*)},
			font = \upshape]
		\item\label{alg:solve_primal} {\tt SOLVE \& ESTIMATE (PRIMAL).} For all $m = 1, 2,
			      3,
			      \dots$, repeat~\ref{alg:primal_a}--\ref{alg:primal_c}:
		      \begin{enumerate}[label = (\alph*),
				      ref = {\normalfont(\alph*)},
				      font = \upshape]
			      \item\label{alg:primal_a} Set $u_\ell^{m,0} \coloneqq u_\ell^{m-1, \nn}$
			            and define for theoretical reasons
			            $u_\ell^{m,\star} \coloneqq \Phi_\ell^{u}(\delta;u_\ell^{m-1,\nn})$.
			      \item For all $n=1,2,3, \dots$, repeat the following steps~\ref{alg:primal_i}--\ref{alg:primal_ii}:
			            \begin{enumerate}[label = (\roman*),
					            ref = {\normalfont(\roman*)},
					            font = \upshape]
				            \item\label{alg:primal_i}
				                  Compute \(u_\ell^{m, n} \coloneqq \Psi_\ell(u_\ell^{m,\star}; u_\ell^{m, n-1})\)
				                  and corresponding refinement indicators
				                  $\eta_\ell(T; u_\ell^{m, n})$ for all
				                  \(T \in \TT_\ell\).
				            \item\label{alg:primal_ii}
				                  Terminate $n$-loop
				                  and define $\nn[\ell, m] \coloneqq n$ if
				                  \begin{equation}\label{eq:n_stopping_criterion}
					                  \vvvert u_\ell^{m, n} - u_\ell^{m, n-1} \vvvert
					                  \le
					                  \lamalg \,
					                  \bigl[
						                  \lamsym \, \eta_{\ell}(u_\ell^{m, n}) + \vvvert u_\ell^{m, n} - u_\ell^{m, 0} \vvvert
						                  \bigr].
				                  \end{equation}
			            \end{enumerate}
			      \item\label{alg:primal_c}
			            Terminate $m$-loop and define \(\mm[\ell]
			            \coloneqq m\) if
			            \begin{equation}\label{eq:m_stopping_criterion}
				            \vvvert u_\ell^{m, \nn} - u_\ell^{m, 0} \vvvert
				            \le
				            \lamsym \, \eta_\ell(u_\ell^{m, \nn}).
			            \end{equation}
		      \end{enumerate}

		\item\label{alg:solve_dual} {\tt SOLVE \& ESTIMATE (DUAL).} For all \(\mu = 1, 2, 3, \ldots,
		      \) repeat~\ref{alg:dual_a}--\ref{alg:dual_c}:
		      \begin{enumerate}[label = (\alph*),
				      ref = {\normalfont(\alph*)},
				      font = \upshape]
			      \item\label{alg:dual_a} Set $z_\ell^{\mu,0} \coloneqq z_\ell^{\mu-1,
					            \underline{\nu}}$ and define for theoretical reasons
			            $z_\ell^{\mu,\star}
				            \coloneqq \Phi_\ell^{z}(\delta; z_\ell^{\mu-1,\underline{\nu}})$.
			      \item For all $\nu=1,2,3, \dots$, repeat the following steps \ref{alg_dual_i}--\ref{alg:dual_ii}:
			            \begin{enumerate}[label = (\roman*),
					            ref = {\normalfont(\roman*)},
					            font = \upshape]
				            \item\label{alg_dual_i} Compute
				                  \(
				                  z_\ell^{\mu, \nu} \coloneqq \Psi_\ell(z_\ell^{\mu, \star}; z_\ell^{\mu, \nu-1})
				                  \)
				                  and corresponding refinement indicators
				                  $\zeta_\ell(T; z_\ell^{\mu, \nu})$ for all $T \in
					                  \TT_\ell$.
				            \item\label{alg:dual_ii} Terminate $\nu$-loop
				                  and define $\underline{\nu}[\ell, \mu] \coloneqq
					                  \nu$ if
				                  \begin{equation}\label{eq:nu_stopping_criterion}
					                  \vvvert z_\ell^{\mu, \nu} - z_\ell^{\mu,\nu-1} \vvvert
					                  \le
					                  \lamalg \,
					                  \bigl[
						                  \lamsym \, \zeta_{\ell}(z_\ell^{\mu, \nu}) + \vvvert z_\ell^{\mu, \nu} - z_\ell^{\mu, 0} \vvvert
						                  \bigr].
				                  \end{equation}
			            \end{enumerate}
			      \item\label{alg:dual_c} Terminate $\mu$-loop and define
			            \(\underline{\mu}[\ell] \coloneqq \mu\) if
			            \begin{equation}\label{eq:mu_stopping_criterion}
				            \vvvert z_\ell^{\mu, \underline{\nu}} - z_\ell^{\mu, 0} \vvvert
				            \le
				            \lamsym \, \zeta_\ell(z_\ell^{\mu, \underline{\nu}}).
			            \end{equation}
		      \end{enumerate}

		\item\label{alg:mark}  {\tt MARK.} Determine sets
		      \begin{alignat*}{2}
			      \overline{\MM}_\ell^{u} & \in \M_\ell^u[\theta, u_\ell^{\mm,
				      \nn}]
			                              &                                            & \coloneqq
			      \{
				      \UU_\ell \subseteq \TT_\ell \colon
				      \theta \, \eta_\ell(u_\ell^{\mm,
					      \nn})^2
				      \le
				      \eta_\ell(\UU_\ell, u_\ell^{\mm, \nn})^2
			      \},
			      \\
			      \overline{\MM}_\ell^{z}
			                              & \in \M_\ell^z[\theta, z_\ell^{\mmu, \nnu}]
			                              &                                            & \coloneqq
			      \{
				      \UU_\ell \subseteq \TT_\ell \colon
				      \theta \, \zeta_\ell(z_\ell^{\underline{\mu},
					      \underline{\nu}})^2
				      \le
				      \zeta_\ell(\UU_\ell, z_\ell^{\underline{\mu},
					      \underline{\nu}})^2
			      \}
		      \end{alignat*}
		      satisfying the following Dörfler criterion \cite{d1996} with
		      quasi-minimal cardinality
		      \begin{equation}\label{eq:marking_criterion}
			      \# \overline{\MM}_\ell^{u}
			      \le
			      \Cmark \min_{\UU_\ell^\star \in \M_\ell^u[\theta, u_\ell^{\mm,\nn}]} \UU_\ell^\star
			      \quad
			      \text{and}
			      \quad
			      \# \overline{\MM}_\ell^{z}
			      \le
			      \Cmark \min_{\UU_\ell^\star \in \M_\ell^z[\theta, z_\ell^{\mmu, \nnu}]} \UU_\ell^\star.
		      \end{equation}
		      As in \cite{fpz2016}, define the set of marked elements
		      \(
		      \MM_\ell \coloneqq \MM_\ell^{u} \cup \MM_\ell^{z},
		      \)
		      where \(\MM_\ell^{u} \subseteq \overline{\MM}_\ell^{u}\) and
		      \(
		      \MM_\ell^{z} \subseteq \overline{\MM}_\ell^{z}
		      \)
		      satisfy
		      \(
		      \#\MM_\ell^{u}
		      =
		      \# \MM_\ell^{z} = \min \{\# \overline{\MM}_\ell^{u}, \#\overline{\MM}_\ell^{z}\}
		      \).

		\item\label{alg:refine} {\tt REFINE.} Generate the new mesh
		      \(
		      \TT_{\ell+1} \coloneqq  {\tt refine} (\MM_\ell, \TT_\ell)
		      \)
		      by NVB and define
		      \(
		      u_{\ell+1}^{0,0}
		      \coloneqq
		      u_{\ell+1}^{0,\nn}
		      \coloneqq
		      u_{\ell+1}^{0, \star}
		      \coloneqq
		      u_\ell^{\mm,\nn}
		      \)
		      and
		      \(
		      z_{\ell+1}^{0,0}
		      \coloneqq
		      z_{\ell+1}^{0,\underline{\nu}}
		      \coloneqq
		      z_{\ell+1}^{0, \star}
		      \coloneqq
		      z_\ell^{\underline{\mu}, \underline{\nu}}
		      \)
		      (nested iteration).
	\end{enumerate}

	\noindent
	{\bfseries Output:}  Sequences of successively refined triangulations
	$\TT_\ell$, successive discrete
	approximations $u_\ell^{m, n}$, \(z_\ell^{\mu, \nu}\), and corresponding error estimators
	$\eta_\ell(u_\ell^{m, n})$, $\zeta(z_\ell^{\mu, \nu})$.
\end{algorithm}
\begin{remark}
	\textup{(i)}\:
	Although the primal loop~\ref{alg:solve_primal} and dual
	loop~\ref{alg:solve_dual}
	in Algorithm~\ref{algorithm:afem} are displayed
	sequentially, they
	are
	independent of each other. Therefore, a practical implementation
	will
	realize these iterations simultaneously since the
	system
	matrix is the same
	(thanks to the symmetrization step).

	\textup{(ii)}\:
	In order to investigate the asymptotic behavior,
	it is reasonable to analyze
	Algorithm~\ref{algorithm:afem} in the present
	formulation with infinitely many steps. We note that a practical
	implementation will
	terminate with \(\elll \coloneqq \ell\)
	provided that the estimator product is smaller than a user-specified
	tolerance.
\end{remark}

For the analysis of Algorithm~\ref{algorithm:afem}, we define the
index set
\(\QQ \coloneqq \QQ^{u} \cup \QQ^{z}\) with
\begin{subequations}
	\begin{align}
		\QQ^{u} & \coloneqq \{(\ell, m, n) \in \N_0^3 \colon u_\ell^{m, n} \text{
				is used in Algorithm~\ref{algorithm:afem}}\}, \label{eq:primal_index_set}
		\\
		\QQ^{z} & \coloneqq \{(\ell, \mu, \nu) \in \N_0^3 \colon z_\ell^{\mu, \nu}
			\text{ is  used in Algorithm~\ref{algorithm:afem}}\}. \label{eq:dual_index_set}
	\end{align}
\end{subequations}
Furthermore, we require the following final indices and notice that these are
consistent with those defined in Algorithm~\ref{algorithm:afem}:
\begin{subequations}
	\begin{align}
		\elll        & \coloneqq \sup \{\ell \in \N_0 \colon (\ell, 0, 0) \in \QQ^{u}
			\text{ or } (\ell, 0, 0) \in \QQ^{z}\} \in \N_0 \cup \{\infty\},
		\\
		\mm[\ell]    & \coloneqq \sup \{m \in \N \colon (\ell, m, 0) \in
			\QQ^{u}\},
		\hphantom{\underline{\nu}[\ell, \mu]}\!
		\underline{\mu}[\ell] \coloneqq \sup \{\mu \in \N \colon (\ell, \mu,
			0) 	\in \QQ^{z}\},
		\\
		\nn[\ell, m] & \coloneqq \sup \{n \in \N \colon (\ell, m, n) \in
			\QQ^{u}\},
		\hphantom{\underline{\mu}[\ell]}
		\underline{\nu}[\ell, \mu] \coloneqq \sup \{\nu \in \N \colon (\ell,
			\mu, \nu) \in  \QQ^{z}\}.
	\end{align}
\end{subequations}
In addition, we set \(\kk[\ell] \coloneqq \max \{\mm[\ell], \underline{\mu}[\ell]\}\) as
well
as \(\jj[\ell, k] \coloneqq \max \{\nn[\ell, k], \underline{\nu}[\ell, k]\}\).

Finally, we introduce the total step counter $\vert \cdot, \cdot,
	\cdot \vert$ defined for all $(\ell, k, j) \in \QQ$ by
\begin{equation*}
	|\ell, k, j|
	= \sum_{\ell' = 0}^{\ell-1} \sum_{k' = 0}^{\kk[\ell']}
	\sum_{j'=0}^{\jj[\ell', k']} 1
	+ \sum_{k' = 0}^{k-1} \sum_{j'=0}^{\jj[\ell, k']}
	1
	+ \sum_{j'=0}^{j-1} 1.
\end{equation*}
This definition indeed provides a lexicographic ordering on
\(\QQ\), if the solver steps \ref{algorithm:afem}\ref{alg:solve_primal} for \(u_\ell^{m,
	n}\) and \ref{algorithm:afem}\ref{alg:solve_dual} for
\(z_\ell^{\mu, \nu}\) are done in \emph{parallel}. We note that one solver step
of an
optimal geometric multigrid method on graded meshes can be
performed in \(\OO(\# \TT_{\ell})\)
operations; see, e.g., \cite{wz2017, imps2022}. For given \(u_\ell^{m, n},
z_\ell^{\mu,\nu} \in \XX_\ell\), the simultaneous computation of the refinement
indicators \(\eta_{\ell}(T, u_\ell^{m, n})\) and
\(\zeta_{\ell}(T, z_\ell^{\mu, \nu})\)
requires \(\OO(\# \TT_{\ell})\) operations, hence the steps
\ref{algorithm:afem}\ref{alg:solve_primal}--\ref{alg:solve_dual} require \(\OO(\# \TT_{\ell})\)
operations as well. Furthermore, Dörfler marking can be
performed in \(\OO(\#
\TT_{\ell})\) operations; see, e.g., \cite{stevenson2007, pp2020}.
Therefore, the total work to compute \(u_\ell^{m, n}\) and
\(z_\ell^{\mu,\nu}\) is (up to a constant) given by
\begin{equation}\label{eq:cost}
	\mathtt{cost}(\ell, k, j)
	\coloneqq
	\! \! \! \! \! \!
	\sum_{
		\substack{(\ell', m', n') \in \QQ^u
			\\
			|\ell', m', n'| \le |\ell,k,j|
		}
	}
	\! \! \! \! \! \# \TT_{\ell^\prime} \
	+
	\! \! \! \sum_{
		\substack{
			(\ell', \mu', \nu') \in \QQ^z
			\\
			|\ell', \mu', \nu'| \le |\ell,k,j|
		}
	}
	\! \! \! \! \! \# \TT_{\ell^\prime}
	\simeq
	\sum_{
		\substack{
			(\ell', k', j') \in \QQ
			\\
			|\ell', k', j'| \le |\ell,k,j|
		}
	}
	\! \! \! \# \TT_{\ell^\prime}.
\end{equation}

Since \(\# \QQ = \infty\), we have either \(\elll =
\infty\), or
\(\kk[\elll] = \infty\), or \(\jj[\elll, \kk] = \infty\).
A further observation about Algorithm~\ref{algorithm:afem} is
that the nested algebraic
solver loop within the Zarantonello loop is guaranteed to
terminate, and the latter case
\(\jj[\elll, \kk] = \infty\) is therefore excluded.
\begin{lemma}[finite termination of algebraic solver
			{\cite[Lemma~3.2]{bhimps2023}}]
	\label{lemma:termination-j}
	Independently of the algorithmic parameters \(\delta\), $\theta$,
	$\lamsym$, and $\lamalg$, the innermost \(n\)- and \(\nu\)-loops
	of
	Algorithm~\ref{algorithm:afem} always terminate.
	In particular,
	$\jj[\ell,k] < \infty$ for all $(\ell,k,0) \in \QQ$. \qed
\end{lemma}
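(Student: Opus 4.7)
The plan is to exploit the uniform algebraic contraction~\eqref{eq:algebra_contraction} to show that, for fixed $\ell$ and $m$, the iterates $u_\ell^{m, n}$ converge to the exact discrete target $u_\ell^{m, \star} = \Phi_\ell^{u}(\delta; u_\ell^{m-1, \nn})$ as $n \to \infty$, and then to argue that the algebraic stopping criterion~\eqref{eq:n_stopping_criterion} must be satisfied for some finite $n$. By the symmetric structure of the algorithm, the same argument will apply verbatim to the dual $\nu$-loop.

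First, iterating~\eqref{eq:algebra_contraction} yields
\begin{equation*}
\vvvert u_\ell^{m, \star} - u_\ell^{m, n} \vvvert
\;\le\; \qalg^{\,n} \, \vvvert u_\ell^{m, \star} - u_\ell^{m, 0} \vvvert
\;\xrightarrow{n \to \infty}\; 0,
\end{equation*}
so the triangle inequality gives $\vvvert u_\ell^{m, n} - u_\ell^{m, n-1} \vvvert \to 0$, while stability~\eqref{axiom:stability} (applied with $\TT_H = \TT_h = \TT_\ell$) additionally ensures both $\eta_\ell(u_\ell^{m, n}) \to \eta_\ell(u_\ell^{m, \star})$ and $\vvvert u_\ell^{m, n} - u_\ell^{m, 0} \vvvert \to \vvvert u_\ell^{m, \star} - u_\ell^{m, 0} \vvvert$ as $n \to \infty$.

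Then I would split into two cases. In the generic case $u_\ell^{m, 0} \neq u_\ell^{m, \star}$, the right-hand side of~\eqref{eq:n_stopping_criterion} converges to the strictly positive limit $\lamalg\,[\lamsym\,\eta_\ell(u_\ell^{m, \star}) + \vvvert u_\ell^{m, \star} - u_\ell^{m, 0} \vvvert]$, while the left-hand side tends to zero; hence~\eqref{eq:n_stopping_criterion} must hold for all sufficiently large $n$. In the degenerate case $u_\ell^{m, 0} = u_\ell^{m, \star}$, I would use~\eqref{eq:algebra_contraction} with $w_H = w_H^\star = u_\ell^{m, \star}$ to deduce the exact fixed-point identity $\Psi_\ell(u_\ell^{m,\star};\,u_\ell^{m,\star}) = u_\ell^{m, \star}$, so that $u_\ell^{m, 1} = u_\ell^{m, 0}$ and~\eqref{eq:n_stopping_criterion} is trivially met at $n = 1$ with both sides vanishing.

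The main, albeit mild, subtlety is precisely this degenerate case: a priori, if the right-hand side of~\eqref{eq:n_stopping_criterion} happened to vanish at the same rate as the left-hand side, the criterion could in principle fail for every $n$. The observation that the uniform algebraic contraction forces the fixed-point identity $\Psi_\ell(w_\ell^\star; w_\ell^\star) = w_\ell^\star$ is what rules this out and closes the case analysis cleanly. The $\nu$-loop is handled by the verbatim analogue after replacing $u$, $m$, $n$, $\eta_\ell$, $\Phi_\ell^u$ by $z$, $\mu$, $\nu$, $\zeta_\ell$, $\Phi_\ell^z$, yielding $\jj[\ell, k] < \infty$ for all $(\ell, k, 0) \in \QQ$.
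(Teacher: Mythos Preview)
Your argument is correct. Note that the paper does not actually prove this lemma but simply cites it from~\cite[Lemma~3.2]{bhimps2023} and marks it with \qed; your proof is the natural one and matches the standard argument. One minor imprecision: in the degenerate case $u_\ell^{m,0} = u_\ell^{m,\star}$, the right-hand side of~\eqref{eq:n_stopping_criterion} at $n=1$ equals $\lamalg\,\lamsym\,\eta_\ell(u_\ell^{m,\star})$, which need not vanish; but since the left-hand side is zero, the inequality holds regardless, so your conclusion stands.
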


\section{\textsl{A~posteriori} error analysis}\label{section:aposteriori}

Algorithm~\ref{algorithm:afem} does not provide
the exact algebraic solutions \(u_\ell^{m, \star}\) and
\(z_\ell^{\mu, \star}\)
to~\eqref{eq:Zarantonello-iterations} but instead uses
an inexact algebraic solver.
However,
the following result from \cite{bhimps2023}
applies to the primal and the dual problem
alike and
shows that these inexact Zarantonello
iterations remain
contractions
except for the final iterate on each mesh
(see also \cite{bhimps2023b} for an extended
version).
\begin{lemma}[contraction of inexact Zarantonello iteration
			{\cite[Lemma~5.1]{bhimps2023}}]
	\label{lem:inexact_Zarantonello_contraction}
	Choose any damping parameter
	\(0 < \delta < \delta^\star = 2 \alpha / L^2\)
	to ensure the
	contraction~\eqref{eq:Zarantonello-contraction} of
	the Zarantonello iteration and
	\begin{equation}\label{eq:qsymm}
		0
		<
		\lamalg^\star
		<
		\frac{(1-\qsym) (1-\qalg)}{4 \qalg}
		\quad\text{such that}\quad
		0
		<
		\qsymm
		\coloneqq
		\frac{\qsym + 2 \, \frac{\qalg}{1-\qalg} \, \lamalg^\star}{1-2 \,
			\frac{\qalg}{1-\qalg} \, \lamalg^\star}
		<
		1.
	\end{equation}
	Then, for arbitrary \(\lamsym > 0\) and any \(0 < \lamalg \le
	\lamalg^\star\),
	we have for all  \((\ell, m, \nn) \in \QQ^{u}\)
	with \(1 \le m < \mm[\ell]\) and all \((\ell, \mu, \nnu) \in \QQ^{z}\) with
	\(1 \le \mu < \mmu[\ell]\) that
	\begin{equation}\label{eq:inexact_Zarantonello_contraction}
		\vvvert u_\ell^\star - u_\ell^{m, \nn} \vvvert \le \qsymm \, \vvvert u_\ell^\star -
		u_\ell^{m-1, \nn} \vvvert
		\quad
		\text{and}
		\quad
		\vvvert z_\ell^\star - z_\ell^{\mu, \nnu} \vvvert
		\le
		\qsymm \, \vvvert z_\ell^\star -
		z_\ell^{\mu-1, \nnu} \vvvert.
	\end{equation}
	Moreover, for \(m = \mm[\ell]\) resp. \(\mu = \mmu[\ell]\), it holds that
	\begin{equation}\label{eq2:inexact_Zarantonello_contraction}
		\begin{aligned}
			\vvvert u_\ell^\star - u_\ell^{\mm, \nn} \vvvert
			 & \le
			\qsym \, \vvvert u_\ell^\star - u_\ell^{\mm-1, \nn} \vvvert
			+
			\frac{2 \, \qalg}{1-\qalg} \, \lamalg \, \lamsym \,
			\eta_{\ell}(u_\ell^{\mm, \nn}),
			\\
			\vvvert z_\ell^\star - z_\ell^{\mmu, \nnu} \vvvert
			 & \le
			\qsym \, \vvvert z_\ell^\star - z_\ell^{\mmu - 1, \nnu} \vvvert
			+
			\frac{2 \, \qalg}{1-\qalg} \, \lamalg \, \lamsym \,
			\zeta_{\ell}(z_\ell^{\mmu, \nnu}). \qed
		\end{aligned}
	\end{equation}
\end{lemma}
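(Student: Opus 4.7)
The plan is to run the same triangle-inequality decomposition for both the primal and dual iterations; since the two statements are perfectly symmetric (both rely only on the Zarantonello contraction~\eqref{eq:Zarantonello-contraction} and the solver contraction~\eqref{eq:algebra_contraction}), I would write out the primal case only and remark that the dual case is verbatim. The starting point is to insert the exact Zarantonello iterate $u_\ell^{m,\star} = \Phi_\ell^u(\delta; u_\ell^{m-1,\nn})$ and split
\begin{equation*}
	\vvvert u_\ell^\star - u_\ell^{m,\nn} \vvvert
	\le
	\vvvert u_\ell^\star - u_\ell^{m,\star} \vvvert
	+
	\vvvert u_\ell^{m,\star} - u_\ell^{m,\nn} \vvvert.
\end{equation*}
Because $u_\ell^\star$ is the unique fixed point of $\Phi_\ell^u(\delta;\cdot)$, the Zarantonello contraction~\eqref{eq:Zarantonello-contraction} bounds the first summand by $\qsym\,\vvvert u_\ell^\star - u_\ell^{m-1,\nn}\vvvert$.

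For the second summand I would use the standard contraction-to-increment trick. From~\eqref{eq:algebra_contraction} applied to the SPD problem solved by $u_\ell^{m,\star}$ one gets $\vvvert u_\ell^{m,\star} - u_\ell^{m,\nn}\vvvert \le \qalg \,\vvvert u_\ell^{m,\star} - u_\ell^{m,\nn-1}\vvvert$, and a triangle inequality rearranged as
\begin{equation*}
	(1-\qalg)\,\vvvert u_\ell^{m,\star} - u_\ell^{m,\nn}\vvvert
	\le
	\qalg\,\vvvert u_\ell^{m,\nn} - u_\ell^{m,\nn-1}\vvvert
\end{equation*}
yields $\vvvert u_\ell^{m,\star} - u_\ell^{m,\nn}\vvvert \le \tfrac{\qalg}{1-\qalg}\,\vvvert u_\ell^{m,\nn} - u_\ell^{m,\nn-1}\vvvert$. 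Now I would feed in the algebraic stopping criterion~\eqref{eq:n_stopping_criterion} and recall $u_\ell^{m,0} = u_\ell^{m-1,\nn}$, so the increment is controlled by $\lamalg\bigl[\lamsym\,\eta_\ell(u_\ell^{m,\nn}) + \vvvert u_\ell^{m,\nn} - u_\ell^{m-1,\nn}\vvvert\bigr]$.

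For the final Zarantonello step $m = \mm$, the stopping criterion~\eqref{eq:m_stopping_criterion} lets me replace $\vvvert u_\ell^{m,\nn} - u_\ell^{m-1,\nn}\vvvert$ by $\lamsym\,\eta_\ell(u_\ell^{\mm,\nn})$, which immediately produces~\eqref{eq2:inexact_Zarantonello_contraction} with the factor $\tfrac{2\qalg}{1-\qalg}\,\lamalg\,\lamsym$ in front of the estimator. For intermediate steps $m < \mm$, the stopping criterion~\eqref{eq:m_stopping_criterion} is \emph{violated}, so I use $\lamsym\,\eta_\ell(u_\ell^{m,\nn}) < \vvvert u_\ell^{m,\nn} - u_\ell^{m-1,\nn}\vvvert$ to absorb the estimator term into the increment. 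A further triangle inequality $\vvvert u_\ell^{m,\nn} - u_\ell^{m-1,\nn}\vvvert \le \vvvert u_\ell^\star - u_\ell^{m,\nn}\vvvert + \vvvert u_\ell^\star - u_\ell^{m-1,\nn}\vvvert$ produces an inequality of the form
\begin{equation*}
	\Bigl(1 - \tfrac{2\qalg}{1-\qalg}\,\lamalg\Bigr)\vvvert u_\ell^\star - u_\ell^{m,\nn}\vvvert
	\le
	\Bigl(\qsym + \tfrac{2\qalg}{1-\qalg}\,\lamalg\Bigr)\vvvert u_\ell^\star - u_\ell^{m-1,\nn}\vvvert.
\end{equation*}

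The main obstacle, and the only place where one has to argue carefully, is verifying that the bracketed factor on the left-hand side is positive and that the resulting ratio $\qsymm$ in~\eqref{eq:qsymm} is strictly less than one. Both facts are equivalent to $\tfrac{4\qalg}{1-\qalg}\,\lamalg < 1-\qsym$, which is exactly the hypothesis $\lamalg \le \lamalg^\star < \tfrac{(1-\qsym)(1-\qalg)}{4\qalg}$; dividing through then yields~\eqref{eq:inexact_Zarantonello_contraction}. The dual identities follow by the same argument applied to the dual Zarantonello operator $\Phi_\ell^z(\delta;\cdot)$, using~\eqref{eq:nu_stopping_criterion}--\eqref{eq:mu_stopping_criterion} in place of~\eqref{eq:n_stopping_criterion}--\eqref{eq:m_stopping_criterion}.
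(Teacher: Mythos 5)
Your proposal is correct: the decomposition via $u_\ell^{m,\star}$, the Zarantonello contraction~\eqref{eq:Zarantonello-contraction} for the first term, the a~posteriori bound $\vvvert u_\ell^{m,\star}-u_\ell^{m,\nn}\vvvert\le\frac{\qalg}{1-\qalg}\vvvert u_\ell^{m,\nn}-u_\ell^{m,\nn-1}\vvvert$ combined with the stopping criteria \eqref{eq:n_stopping_criterion}--\eqref{eq:m_stopping_criterion} (used in the satisfied form for $m=\mm$ and the failed form for $m<\mm$), and the final absorption step under $\lamalg\le\lamalg^\star<\frac{(1-\qsym)(1-\qalg)}{4\qalg}$ reproduce exactly \eqref{eq:inexact_Zarantonello_contraction} and \eqref{eq2:inexact_Zarantonello_contraction}, up to the harmless imprecision that positivity of $1-\tfrac{2\qalg}{1-\qalg}\lamalg$ is implied by (not equivalent to) that smallness condition. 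The paper itself gives no proof but quotes the result from \cite[Lemma~5.1]{bhimps2023}, and your argument is the standard derivation underlying that reference, consistent with the a~posteriori estimates \eqref{eq:aposteriori_Zarantonello}--\eqref{eq:aposteriori_algebra} recorded in the paper.
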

The subsequent lemma gathers \textsl{a~posteriori} error estimates
following directly from the corresponding contraction of the symmetrization,
algebraic solver, and the inexact Zarantonello iteration.
Further details of the
elementary proof are omitted.
\begin{lemma}[stability and \textsl{a~posteriori} error control]
	For all \((\ell, m,
	0) \in \QQ^u\),
	contraction~\eqref{eq:Zarantonello-contraction} shows
	\begin{equation}\label{eq:aposteriori_Zarantonello}
		\frac{1-\qsym}{\qsym} \, \vvvert u_\ell^{\star} - u_\ell^{m, \star} \vvvert
		\le
		\vvvert u_\ell^{m, \star} - u_\ell^{m-1,
			\nn} \vvvert
		\le
		(1+\qsym) \, \vvvert u_\ell^{\star} -
		u_\ell^{m-1, \nn} \vvvert.
	\end{equation}
	Analogously, for all \((\ell, m, n) \in \QQ^u\)
	the contraction~\eqref{eq:algebra_contraction} ensures
	\begin{equation}\label{eq:aposteriori_algebra}
		\frac{1-\qalg}{\qalg} \, \vvvert u_\ell^{m, \star} - u_\ell^{m, n} \vvvert
		\le
		\vvvert u_\ell^{m, n} - u_\ell^{m, n-1} \vvvert
		\le
		(1+\qalg) \, \vvvert u_\ell^{m, \star} - u_\ell^{m, n-1} \vvvert.
	\end{equation}
	For all
	\((\ell, m, \nn) \in \QQ^{u}\) with \(m < \mm[\ell]\), the
	contraction~\eqref{eq:inexact_Zarantonello_contraction} leads to
	\begin{equation}\label{eq:aposteriori_inexactZarantonello}
		\frac{1-\qsymm}{\qsymm} \, \vvvert u_\ell^{\star} - u_\ell^{m, \nn} \vvvert
		\le
		\vvvert u_\ell^{m, \nn} - u_\ell^{m-1, \nn} \vvvert
		\le
		(1+\qsymm) \, \vvvert u_\ell^{\star} - u_\ell^{m-1, \nn} \vvvert.
	\end{equation}
	The analogous estimates are also valid for the dual variable. \qed
\end{lemma}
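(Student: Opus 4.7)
All three estimates in the lemma have the same structure: an identity of the form $\vvvert X - Y \vvvert \le q \vvvert Z - Y \vvvert$ (i.e., a contraction with respect to the fixed point $X$) together with the triangle inequality yields a two-sided comparison between the error $\vvvert X - Y \vvvert$ (or $\vvvert X - Z \vvvert$) and the observable increment. I will treat only the primal quantities, since the dual case proceeds verbatim by replacing $u_\ell^{m,\cdot}$ by $z_\ell^{\mu,\cdot}$, $\eta_\ell$ by $\zeta_\ell$, and $\Phi_\ell^u$ by $\Phi_\ell^z$.

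For the first estimate~\eqref{eq:aposteriori_Zarantonello}, I use that $u_\ell^{m,\star} = \Phi_\ell^u(\delta; u_\ell^{m-1,\nn})$ and $u_\ell^{\star} = \Phi_\ell^u(\delta; u_\ell^{\star})$, so that the Zarantonello contraction~\eqref{eq:Zarantonello-contraction} yields
\[
\vvvert u_\ell^{\star} - u_\ell^{m,\star} \vvvert \le \qsym \, \vvvert u_\ell^{\star} - u_\ell^{m-1,\nn} \vvvert.
\]
The upper bound then follows immediately from the triangle inequality $\vvvert u_\ell^{m,\star} - u_\ell^{m-1,\nn} \vvvert \le \vvvert u_\ell^{m,\star} - u_\ell^{\star} \vvvert + \vvvert u_\ell^{\star} - u_\ell^{m-1,\nn} \vvvert$. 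For the lower bound, the triangle inequality gives $\vvvert u_\ell^{m,\star} - u_\ell^{m-1,\nn} \vvvert \ge \vvvert u_\ell^{\star} - u_\ell^{m-1,\nn} \vvvert - \vvvert u_\ell^{\star} - u_\ell^{m,\star} \vvvert$, and then bounding $\vvvert u_\ell^{\star} - u_\ell^{m-1,\nn} \vvvert \ge \qsym^{-1} \vvvert u_\ell^{\star} - u_\ell^{m,\star} \vvvert$ via the same contraction produces the factor $(1-\qsym)/\qsym$.

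The estimate~\eqref{eq:aposteriori_algebra} follows from exactly the same two-step recipe, with $u_\ell^{m,\star}$ now playing the role of the fixed point of $\Psi_\ell(u_\ell^{m,\star}; \cdot)$ and $\qalg$ taking the role of $\qsym$, using~\eqref{eq:algebra_contraction}. Analogously, estimate~\eqref{eq:aposteriori_inexactZarantonello} for the \emph{inexact} Zarantonello iteration is derived from the contraction~\eqref{eq:inexact_Zarantonello_contraction}, which is precisely the condition needed to run the same argument one more time (with contraction constant $\qsymm$); the hypothesis $m < \mm[\ell]$ is only required to ensure the availability of~\eqref{eq:inexact_Zarantonello_contraction} (rather than the weaker~\eqref{eq2:inexact_Zarantonello_contraction} with the additional estimator term that holds for the terminal iterate).

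There is no real obstacle here: the entire lemma is the abstract observation that any self-map that contracts toward a fixed point automatically provides reliable and efficient \textsl{a~posteriori} control of the error by the consecutive increment, with constants depending only on the contraction factor. The author's announcement that the proof is omitted as ``elementary'' is therefore justified, and my write-up would consist at most of the two short triangle-inequality lines above, applied three times.
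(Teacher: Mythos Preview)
Your proposal is correct and matches the paper's approach: the paper explicitly omits the proof as elementary, and what you wrote is precisely the standard two-triangle-inequality argument one expects, applied uniformly to each of the three contractions~\eqref{eq:Zarantonello-contraction}, \eqref{eq:algebra_contraction}, and~\eqref{eq:inexact_Zarantonello_contraction}. Your remark that the hypothesis $m < \mm[\ell]$ in~\eqref{eq:aposteriori_inexactZarantonello} is needed only to invoke~\eqref{eq:inexact_Zarantonello_contraction} rather than the perturbed estimate~\eqref{eq2:inexact_Zarantonello_contraction} is also exactly right.
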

Finally, the following lemma shows that in the case of finitely many mesh-refinement steps,
the Zarantonello iteration does not terminate and one of the two exact continuous solutions is already the discrete solution to~\eqref{eq:discrete_formulation}.
\begin{lemma}
	[case of finite mesh-refinement steps]
	\label{cor:earlystop}
	Suppose that the inexact Zarantonello iteration satisfies
	contraction~\eqref{eq:inexact_Zarantonello_contraction}
	and that \(\eta\) and \(\zeta\) satisfy
	\eqref{axiom:stability}--\eqref{axiom:reliability}.
	If \(\elll < \infty\), then \(\kk[\elll] =
	\infty\) and
	\(\eta_{\elll}(u_{\elll}^{\star}) = 0\) (so that \(u^\star =
	u_{\elll}^\star\)) or \(\zeta_{\elll}(z_{\elll}^{\star}) = 0\)
	(so
	that \(z^{\star} = z_{\elll}^{\star}\)).
\end{lemma}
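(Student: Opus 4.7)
The plan is to unwind the nested structure of Algorithm~\ref{algorithm:afem} at the stalling level and then convert non-termination of a symmetrization loop into the vanishing of the associated estimator. First, recall that mesh refinement in step~\ref{alg:refine} is executed only after both symmetrization steps \ref{alg:solve_primal} and \ref{alg:solve_dual} have terminated, while Lemma~\ref{lemma:termination-j} ensures that the innermost $n$- and $\nu$-loops always terminate. Hence, the only way the outer $\ell$-loop can stop at some finite index $\elll$ is that $\mm[\elll] = \infty$ or $\mmu[\elll] = \infty$. This immediately yields $\kk[\elll] = \max\{\mm[\elll], \mmu[\elll]\} = \infty$, which is the first assertion.

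Second, suppose $\mm[\elll] = \infty$; the case $\mmu[\elll] = \infty$ is entirely symmetric and leads to $\zeta_{\elll}(z_{\elll}^\star) = 0$. Since $m < \mm[\elll] = \infty$ holds trivially for every $m \in \N$, the contraction~\eqref{eq:inexact_Zarantonello_contraction} applies to every primal iterate on level $\elll$. Iterating it, I obtain $\vvvert u_\elll^\star - u_\elll^{m,\nn} \vvvert \le \qsymm^{\,m} \, \vvvert u_\elll^\star - u_\elll^{0,\nn} \vvvert \to 0$ as $m \to \infty$. Combined with the nested-iteration identity $u_\elll^{m,0} = u_\elll^{m-1,\nn}$ from step~\ref{alg:primal_a}, the triangle inequality then gives $\vvvert u_\elll^{m,\nn} - u_\elll^{m,0} \vvvert \to 0$, while stability~\eqref{axiom:stability} of the estimator guarantees $\eta_\elll(u_\elll^{m,\nn}) \to \eta_\elll(u_\elll^\star)$.

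Third, since $\mm[\elll] = \infty$, the stopping criterion~\eqref{eq:m_stopping_criterion} fails for every $m \ge 1$, which reads $\vvvert u_\elll^{m,\nn} - u_\elll^{m,0} \vvvert > \lamsym \, \eta_\elll(u_\elll^{m,\nn})$. Passing to the limit $m \to \infty$ in this strict inequality and using $\lamsym > 0$ forces $\eta_\elll(u_\elll^\star) = 0$. Reliability~\eqref{axiom:reliability} then concludes $\vvvert u^\star - u_\elll^\star \vvvert \le \Crel \, \eta_\elll(u_\elll^\star) = 0$, i.e., $u^\star = u_\elll^\star$. The only point that requires some care is bookkeeping: contraction~\eqref{eq:inexact_Zarantonello_contraction} is stated only for indices with $m < \mm[\elll]$, but the non-termination assumption $\mm[\elll] = \infty$ renders this restriction vacuous, so the argument passes through without further work. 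I do not foresee a genuine obstacle beyond this.
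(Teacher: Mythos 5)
Your proposal is correct and follows essentially the same route as the paper: finiteness of the innermost loops forces $\mm[\elll]=\infty$ or $\mmu[\elll]=\infty$, the contraction~\eqref{eq:inexact_Zarantonello_contraction} (whose restriction $m<\mm[\elll]$ is vacuous) yields convergence $u_{\elll}^{m,\nn}\to u_{\elll}^\star$, and the failed stopping criterion~\eqref{eq:m_stopping_criterion} combined with stability~\eqref{axiom:stability} and reliability~\eqref{axiom:reliability} gives $\eta_{\elll}(u_{\elll}^\star)=0$ and $u^\star=u_{\elll}^\star$ (symmetrically for the dual). The only cosmetic difference is that you pass to the limit in the negated stopping criterion, whereas the paper bounds $\eta_{\elll}(u_{\elll}^\star)$ directly by the vanishing increment $\vvvert u_{\elll}^{m,\nn}-u_{\elll}^{m-1,\nn}\vvvert$.
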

\begin{proof}
	By Lemma~\ref{lemma:termination-j}, we have \(\jj[\ell, k] < \infty\). If \(\elll < \infty\),
	then \(\kk[\elll] = \infty\) and, hence,
	\begin{equation}\label{eq:failed_k_stopping}
		\eta_{\elll}(u_{\elll}^{m, \nn})
		\eqreff{eq:m_stopping_criterion}<
		\lamsym^{-1} \, \vvvert u_{\elll}^{m, \nn} -
		u_{\elll}^{m-1, \nn} \vvvert
		\quad \text{for all \(m\in \N\)}
	\end{equation}
	or
	\begin{equation}\label{eq2:failed_k_stopping}
		\zeta_{\elll}(z_{\elll}^{\mu, \nnu})
		\eqreff{eq:mu_stopping_criterion}<
		\lamsym^{-1} \, \vvvert z_{\elll}^{\mu, \nnu} -
		z_{\elll}^{\mu-1, \nnu} \vvvert
		\quad \text{for all \(\mu \in \N\).}
	\end{equation}
	If \eqref{eq:failed_k_stopping} holds,
	then
	the inexact
	Zarantonello iterates \(u_{\elll}^{m, \nn}\) are convergent
	with limit
	\(u_{\elll}^\star\) and we
	obtain by stability \eqref{axiom:stability} that
	\begin{align*}
		\eta_{\elll}(u_{\elll}^\star)
		 & \eqreff{axiom:stability}\le
		\eta_{\elll}(u_{\elll}^{m, \nn}) + \Cstab \, \vvvert u_{\elll}^\star
		- u_{\elll}^{m, \nn} \vvvert
		\eqreff{eq:failed_k_stopping}\lesssim
		\vvvert u_{\elll}^{m, \nn} - u_{\elll}^{m-1, \nn} \vvvert
		\xrightarrow{m \to \infty}
		0.
	\end{align*}
	This proves that \(\eta_{\elll}(u_{\elll}^\star) = 0\),
	and we infer from reliability \eqref{axiom:reliability}
	that \(u_{\elll}^\star = u^\star\). The same arguments
	apply to
	\(z_{\elll}^\star\) in the case of \eqref{eq2:failed_k_stopping}.
\end{proof}
Due to the contraction of the inexact Zarantonello iteration~\eqref{eq:inexact_Zarantonello_contraction}, we have
the following a~posteriori error estimates for the final iterates.
\begin{lemma}
	[stability of final iterates]
	\label{lem:auxiliary_estimates}
	Suppose that the inexact Zarantonello iteration satisfies
	\eqref{eq:inexact_Zarantonello_contraction}.
	Then, for all \((\ell+1, \mm, \nn) \in \QQ^u\) and \((\ell+1,
	\mmu, \nnu) \in \QQ^{z}\), there holds
	\begin{alignat}{2}
		\vvvert u_{\ell+1}^\star -
		u_{\ell+1}^{\mm-1,\nn} \vvvert
		 & \le
		\vvvert u_{\ell+1}^\star - u_\ell^{\mm,\nn} \vvvert,
		\quad
		\vvvert z_{\ell+1}^\star -
		z_{\ell+1}^{\mmu-1,\nnu} \vvvert
		 &     & \le
		\vvvert z_{\ell+1}^\star - z_\ell^{\mmu,\nnu} \vvvert,
		\label{eq2:stability}
		\\
		\vvvert u_{\ell+1}^{\mm,\nn} - u_\ell^{\mm,\nn} \vvvert
		 & \le
		4 \, \vvvert u_{\ell+1}^\star - u_\ell^{\mm,\nn} \vvvert,
		\quad
		\vvvert z_{\ell+1}^{\mmu,\nnu} - z_\ell^{\mmu,\nnu} \vvvert
		 &     & \le
		4 \, \vvvert z_{\ell+1}^\star - z_\ell^{\mmu,\nnu} \vvvert,
		\label{eq3:stability}
		\\
		\vvvert u_{\ell}^{\mm,\nn} - u_{\ell}^{\mm-1,\nn} \vvvert
		 & \le
		4 \,
		\vvvert u_{\ell}^\star -	u_{\ell}^{\mm-1,\nn} \vvvert,
		\quad
		\vvvert z_{\ell}^{\mmu,\nnu} -
		z_{\ell}^{\mmu-1,\nnu} \vvvert
		 &     & \le
		4 \,
		\vvvert z_{\ell}^\star - z_{\ell}^{\mmu-1,\nnu} \vvvert.
		\label{eq4:stability}
	\end{alignat}
\end{lemma}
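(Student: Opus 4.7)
The three pairs of bounds are proved identically for the primal and dual variables, so I focus on the primal estimates; the dual ones follow verbatim after swapping \(u\leftrightarrow z\), \(\Phi_\ell^u\leftrightarrow \Phi_\ell^z\), and the associated indices.

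For~\eqref{eq2:stability}, the plan is to exploit the nested iteration \(u_{\ell+1}^{0,\nn} = u_\ell^{\mm,\nn}\) built into step~\ref{alg:refine} so that the right-hand side coincides with \(\vvvert u_{\ell+1}^\star - u_{\ell+1}^{0,\nn}\vvvert\). For every intermediate index \(m \in \{1,\ldots,\mm[\ell+1]-1\}\) we have \(m < \mm[\ell+1]\), so the inexact Zarantonello contraction~\eqref{eq:inexact_Zarantonello_contraction} from Lemma~\ref{lem:inexact_Zarantonello_contraction} applies at each such step. Telescoping these \(\mm-1\) contractions produces the factor \(\qsymm^{\,\mm-1} < 1\), which closes the estimate; the case \(\mm=1\) is a trivial identity.

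For~\eqref{eq4:stability}, I would insert the exact Zarantonello iterate \(u_\ell^{\mm,\star}\) and the Galerkin solution \(u_\ell^\star\) into a triangle inequality,
\begin{equation*}
\vvvert u_\ell^{\mm,\nn}-u_\ell^{\mm-1,\nn}\vvvert
\;\le\; \vvvert u_\ell^{\mm,\nn}-u_\ell^{\mm,\star}\vvvert
+\vvvert u_\ell^{\mm,\star}-u_\ell^\star\vvvert
+\vvvert u_\ell^\star-u_\ell^{\mm-1,\nn}\vvvert.
\end{equation*}
The middle term is bounded by \(\qsym\,\vvvert u_\ell^\star-u_\ell^{\mm-1,\nn}\vvvert\) via the Zarantonello contraction~\eqref{eq:Zarantonello-contraction}, since both \(u_\ell^{\mm,\star}\) and \(u_\ell^\star\) are \(\Phi_\ell^u(\delta;\cdot)\)-images of \(u_\ell^{\mm-1,\nn}\) and of \(u_\ell^\star\), respectively. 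For the first term, the algebraic solver contraction~\eqref{eq:algebra_contraction} iterated \(\nn \ge 1\) times yields \(\vvvert u_\ell^{\mm,\nn}-u_\ell^{\mm,\star}\vvvert \le \qalg\,\vvvert u_\ell^{\mm,0}-u_\ell^{\mm,\star}\vvvert\); combining the identity \(u_\ell^{\mm,0}=u_\ell^{\mm-1,\nn}\) with another triangle inequality through \(u_\ell^\star\) and a second use of Zarantonello contraction bounds the latter by \(\qalg(1+\qsym)\,\vvvert u_\ell^\star-u_\ell^{\mm-1,\nn}\vvvert\). Collecting the three contributions yields
\begin{equation*}
\vvvert u_\ell^{\mm,\nn}-u_\ell^{\mm-1,\nn}\vvvert
\le \bigl[1+\qsym+\qalg(1+\qsym)\bigr]\,\vvvert u_\ell^\star-u_\ell^{\mm-1,\nn}\vvvert
= (1+\qsym)(1+\qalg)\,\vvvert u_\ell^\star-u_\ell^{\mm-1,\nn}\vvvert,
\end{equation*}
and \(\qsym,\qalg\in(0,1)\) delivers the stated constant \(4\).

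The estimate~\eqref{eq3:stability} is obtained on level \(\ell+1\) by the same triangle decomposition: nested iteration rewrites its left-hand side as \(\vvvert u_{\ell+1}^{\mm,\nn}-u_{\ell+1}^{0,\nn}\vvvert\), the same split through \(u_{\ell+1}^{\mm,\star}\) and \(u_{\ell+1}^\star\) applies, and every appearance of \(\vvvert u_{\ell+1}^\star-u_{\ell+1}^{\mm-1,\nn}\vvvert\) in the intermediate bounds is replaced by \(\vvvert u_{\ell+1}^\star-u_\ell^{\mm,\nn}\vvvert\) via the already-proved~\eqref{eq2:stability}; the same arithmetic again yields \((1+\qsym)(1+\qalg)<4\). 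I foresee no substantive obstacle here: the argument rests only on triangle inequalities plus the Zarantonello and algebraic contractions, and it pleasantly sidesteps both the error estimator and the sharper final-iterate bound~\eqref{eq2:inexact_Zarantonello_contraction}, since the stopping criteria play no role in the mere stability of the final iterates; the slight looseness of the constant \(4\) is precisely the price paid for combining \((1+\qsym)\) and \((1+\qalg)\) multiplicatively rather than additively.
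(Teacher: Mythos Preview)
Your proof is correct and follows essentially the same approach as the paper: nested iteration plus the inexact Zarantonello contraction for~\eqref{eq2:stability}, and triangle inequalities combined with the exact Zarantonello contraction~\eqref{eq:Zarantonello-contraction} and the algebraic-solver contraction~\eqref{eq:algebra_contraction} for~\eqref{eq3:stability}--\eqref{eq4:stability}. The only cosmetic difference is that the paper first isolates the intermediate bound \(\vvvert u_\ell^\star - u_\ell^{\mm,\nn}\vvvert \le [(1+\qalg)\qsym+\qalg]\,\vvvert u_\ell^\star - u_\ell^{\mm-1,\nn}\vvvert \le 3\,\vvvert u_\ell^\star - u_\ell^{\mm-1,\nn}\vvvert\) and then deduces both~\eqref{eq3:stability} and~\eqref{eq4:stability} from it, whereas you arrive at the same constant \((1+\qsym)(1+\qalg)<4\) by a direct three-term split; the ingredients and logic are identical.
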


\begin{proof}
	For $(\ell+1, \mm, \nn) \in
		\QQ^u$, nested iteration
	$u_{\ell+1}^{0,\nn} = u_\ell^{\mm, \nn}$ together
	with the
	contraction of the inexact Zarantonello
	iteration~\eqref{eq:inexact_Zarantonello_contraction}
	and $\mm[\ell+1] \ge 1$ prove~\eqref{eq2:stability} by
	\begin{equation*}
		\vvvert u_{\ell+1}^\star - u_{\ell+1}^{\mm-1, \nn} \vvvert
		\eqreff*{eq:inexact_Zarantonello_contraction}
		\le
		\qsymm^{\mm[\ell+1]-1} \, \vvvert u_{\ell+1}^\star -
		u_{\ell+1}^{0,\nn} \vvvert
		\le
		\vvvert u_{\ell+1}^\star - u_\ell^{\mm, \nn} \vvvert.
	\end{equation*}
	Let $(\ell, \mm, \nn) \in \QQ^u$.
	Contraction of the algebraic
	solver~\eqref{eq:algebra_contraction}, the fact
	$\nn[\ell,\mm] \ge 1$,
	and nested iteration $u_\ell^{\mm,0} =
		u_\ell^{\mm-1,\nn}$ show that
	\begin{equation}\label{eq:step1:1*}
		\vvvert u_\ell^{\mm, \star} - u_\ell^{\mm, \nn} \vvvert
		\eqreff{eq:algebra_contraction}\le
		\qalg^{\nn[\ell, \mm]} \,
		\vvvert u_\ell^{\mm,\star} - u_\ell^{\mm,0} \vvvert
		\le
		\qalg \, \vvvert u_\ell^{\mm,\star} -
		u_\ell^{\mm-1,\nn} \vvvert.
	\end{equation}
	This and with the contraction of the exact Zarantonello
	iteration~\eqref{eq:Zarantonello-contraction}
	result in
	\begin{equation}\label{eq1:stability}
		\begin{aligned}
			\vvvert u_\ell^\star - u_\ell^{\mm, \nn} \vvvert
			 & \le
			\vvvert u_\ell^\star - u_\ell^{\mm,\star} \vvvert +
			\vvvert u_\ell^{\mm,\star} - u_\ell^{\mm, \nn} \vvvert
			\\&
			\eqreff*{eq:step1:1*}\le
			(1+\qalg) \, \vvvert u_\ell^\star - u_\ell^{\mm,\star} \vvvert +
			\qalg \,
			\vvvert u_\ell^\star - u_\ell^{\mm-1,\nn} \vvvert
			\\&
			\eqreff*{eq:Zarantonello-contraction}\le
			\bigl[ (1+\qalg) \qsym + \qalg \bigr] \, \vvvert u_\ell^\star -
			u_\ell^{\mm-1,\nn} \vvvert
			\le
			3 \, \vvvert u_\ell^\star - u_\ell^{\mm-1,\nn} \vvvert.
		\end{aligned}
	\end{equation}
	Consequently,
	the combination of~\eqref{eq1:stability}
	and~\eqref{eq2:stability} validates
	\eqref{eq3:stability} via
	\begin{align*}
		\vvvert u_{\ell+1}^{\mm, \nn} - u_\ell^{\mm, \nn} \vvvert
		 & \le
		\vvvert u_{\ell+1}^\star - u_{\ell+1}^{\mm, \nn} \vvvert
		+ \vvvert u_{\ell+1}^\star - u_\ell^{\mm, \nn} \vvvert
		\\&
		\eqreff*{eq1:stability}
		\le
		3 \, \vvvert u_{\ell+1}^\star - u_{\ell+1}^{\mm-1, \nn} \vvvert +
		\vvvert u_{\ell+1}^\star - u_\ell^{\mm, \nn} \vvvert
		\eqreff{eq2:stability}
		\le
		4 \, \vvvert u_{\ell+1}^\star - u_\ell^{\mm, \nn} \vvvert.
	\end{align*}
	The estimate~\eqref{eq1:stability} also implies \eqref{eq4:stability},
	because
	\begin{equation*}
		\vvvert u_{\ell}^{\mm, \nn} - u_{\ell}^{\mm-1,
				\nn} \vvvert
		\le
		\vvvert u_{\ell}^\star - u_{\ell}^{\mm, \nn} \vvvert +
		\vvvert u_{\ell}^\star -  u_{\ell}^{\mm-1, \nn} \vvvert
		\eqreff{eq1:stability}\le
		4 \, \vvvert u_{\ell}^\star -  u_{\ell}^{\mm-1, \nn} \vvvert.
	\end{equation*}
	The same arguments prove the estimates for the dual variable
	and conclude the proof.
\end{proof}
The subsequent lemma states the estimator reduction for only one of the
two error estimators. This poses a significant challenge in the
proof of full
linear convergence due to the required contraction of the nonlinear
quasi-error product in Lemma~\ref{lem:contraction_remainder} below.
\begin{lemma}
	[estimator reduction and stability]
	\label{lemma:estimator_reduction}
	Define the constant
	\(0 < q(\theta)
	\coloneqq
	\bigl[1 - (1-\qred^{2} ) \, \theta\bigr]^{1/2}
	< 1\)
	and suppose that the estimators \(\eta\) and \(\zeta\)
	satisfy
	\eqref{axiom:stability}--\eqref{axiom:reduction}.
	If the primal error estimator satisfies the
	Dörfler criterion, i.e., \(\MM_\ell^u =
	\overline{\MM}_\ell^u \subseteq \MM_\ell\) in
	Algorithm~\ref{algorithm:afem}\ref{alg:mark},
	then
	\begin{equation}\label{eq1:estimator_reduction}
		\begin{aligned}
			\eta_{\ell+1}(u_{\ell+1}^{\mm, \nn})
			      & \le
			q(\theta) \, \eta_\ell(u_\ell^{\mm, \nn})
			+
			4 \, \Cstab \, \vvvert u_{\ell+1}^\star - u_\ell^{\mm,
				\nn} \vvvert
			\quad &     & \text{for all} \quad
			(\ell+1, \mm, \nn) \in \QQ^u,
			\\
			\zeta_{\ell+1}(z_{\ell+1}^{\mmu, \nnu})
			      & \le
			\zeta_\ell(z_\ell^{\mmu, \nnu})
			+
			4 \, \Cstab \, \vvvert z_{\ell+1}^\star - z_\ell^{\mmu,
				\nnu} \vvvert
			\quad &     & \text{for all} \quad
			(\ell+1, \mmu, \nnu) \in \QQ^z.
		\end{aligned}
	\end{equation}
	If the dual error estimator satisfies the Dörfler
	criterion,
	i.e., \(\MM_\ell^z =
	\overline{\MM}_\ell^z \subseteq \MM_\ell\) in
	Algorithm~\ref{algorithm:afem}\ref{alg:mark}, then
	\begin{equation}\label{eq2:estimator_reduction}
		\begin{aligned}
			\eta_{\ell+1}(u_{\ell+1}^{\mm, \nn})
			      & \le
			\eta_\ell(u_\ell^{\mm, \nn})
			+
			4 \, \Cstab \, \vvvert u_{\ell+1}^\star - u_\ell^{\mm,
				\nn} \vvvert
			\quad &     & \text{for all} \quad
			(\ell+1, \mm, \nn) \in \QQ^u,
			\\
			\zeta_{\ell+1}(z_{\ell+1}^{\mmu, \nnu})
			      & \le
			q(\theta) \, \zeta_\ell(z_\ell^{\mmu, \nnu})
			+
			4 \, \Cstab \, \vvvert z_{\ell+1}^\star - z_\ell^{\mmu,
				\nnu} \vvvert
			\quad &     & \text{for all} \quad
			(\ell+1, \mmu, \nnu) \in \QQ^z.
		\end{aligned}
	\end{equation}
\end{lemma}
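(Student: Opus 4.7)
The plan is to prove each inequality of \eqref{eq1:estimator_reduction} and \eqref{eq2:estimator_reduction} by separating two effects: first the reduction of the fine-mesh estimator $\eta_{\ell+1}$ evaluated at the \emph{coarse} iterate $u_\ell^{\mm, \nn}$ (respectively $\zeta_{\ell+1}$ at $z_\ell^{\mmu, \nnu}$), and second the stability perturbation needed to switch to the \emph{fine} iterate $u_{\ell+1}^{\mm, \nn}$. Keeping these two effects separated is essential to obtain the sharp factor $q(\theta)$ without the $(1+\varepsilon)$ inflation a simultaneous Young-inequality step would produce.

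For the first effect, I split $\TT_{\ell+1} = (\TT_\ell \cap \TT_{\ell+1}) \sqcup (\TT_{\ell+1} \setminus \TT_\ell)$. On the unchanged part, \eqref{axiom:stability} applied with $v_h = v_H = u_\ell^{\mm, \nn} \in \XX_\ell \subset \XX_{\ell+1}$ yields the equality $\eta_{\ell+1}(\TT_\ell \cap \TT_{\ell+1}; u_\ell^{\mm, \nn}) = \eta_\ell(\TT_\ell \cap \TT_{\ell+1}; u_\ell^{\mm, \nn})$ since the right-hand side of the stability bound vanishes. On the refined part, \eqref{axiom:reduction} gives $\eta_{\ell+1}(\TT_{\ell+1} \setminus \TT_\ell; u_\ell^{\mm, \nn}) \le \qred \, \eta_\ell(\TT_\ell \setminus \TT_{\ell+1}; u_\ell^{\mm, \nn})$. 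Squaring and summing both contributions lead to
\begin{equation*}
\eta_{\ell+1}(u_\ell^{\mm, \nn})^2 \le \eta_\ell(u_\ell^{\mm, \nn})^2 - (1-\qred^2) \, \eta_\ell(\TT_\ell \setminus \TT_{\ell+1}; u_\ell^{\mm, \nn})^2.
\end{equation*}
When the primal Dörfler criterion is active, $\MM_\ell^u \subseteq \MM_\ell \subseteq \TT_\ell \setminus \TT_{\ell+1}$ combined with $\eta_\ell(\MM_\ell^u; u_\ell^{\mm, \nn})^2 \ge \theta \, \eta_\ell(u_\ell^{\mm, \nn})^2$ then produces $\eta_{\ell+1}(u_\ell^{\mm, \nn}) \le q(\theta) \, \eta_\ell(u_\ell^{\mm, \nn})$; when it is inactive, simply dropping the nonnegative correction term delivers the unit factor needed for the ``other'' estimator.

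For the second effect, I apply \eqref{axiom:stability} again but in its ``flat'' form with $\TT_h = \TT_H = \TT_{\ell+1}$ and $\UU_H = \TT_{\ell+1}$, which gives $\eta_{\ell+1}(u_{\ell+1}^{\mm, \nn}) \le \eta_{\ell+1}(u_\ell^{\mm, \nn}) + \Cstab \, \vvvert u_{\ell+1}^{\mm, \nn} - u_\ell^{\mm, \nn} \vvvert$. The perturbation is then absorbed via the stability of the final iterates from Lemma~\ref{lem:auxiliary_estimates}, namely the bound $\vvvert u_{\ell+1}^{\mm, \nn} - u_\ell^{\mm, \nn} \vvvert \le 4 \, \vvvert u_{\ell+1}^\star - u_\ell^{\mm, \nn} \vvvert$ from~\eqref{eq3:stability}, which produces exactly the $4 \, \Cstab$ prefactor claimed. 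Both the dual inequality in \eqref{eq1:estimator_reduction} (where primal Dörfler marking need not control $\zeta$) and the primal inequality in \eqref{eq2:estimator_reduction} follow from the same decomposition with the Dörfler step omitted, and the remaining two assertions follow verbatim by swapping the roles of the primal and dual variables. The main pitfall, and the only real subtlety, is resisting the temptation to merge the stability passage with the reduction step, since doing so would sacrifice the sharp constant $q(\theta)$.
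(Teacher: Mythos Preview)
Your proof is correct and follows essentially the same approach as the paper: both split the fine-mesh estimator at the coarse iterate via \eqref{axiom:stability}--\eqref{axiom:reduction} to obtain the reduction $\eta_{\ell+1}(u_\ell^{\mm,\nn}) \le q(\theta)\,\eta_\ell(u_\ell^{\mm,\nn})$ (or the unit factor when D\"orfler is inactive), then apply stability~\eqref{axiom:stability} on $\TT_{\ell+1}$ together with the bound~\eqref{eq3:stability} from Lemma~\ref{lem:auxiliary_estimates} to pass to the fine iterate. The paper's writeup is slightly more compressed but the logic, the decomposition, and the constants coincide exactly with yours.
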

\begin{proof}
	For $(\ell+1,0, 0) \in \QQ^{u}$, stability~\eqref{axiom:stability}
	and reduction~\eqref{axiom:reduction}
	yield that
	\begin{equation} \label{eq5:combined-contraction}
		\begin{aligned}
			\eta_{\ell+1}(u_\ell^{\mm, \nn})^2
			 & =
			\eta_{\ell+1}(\TT_{\ell+1} \cap \TT_\ell ; u_\ell^{\mm,
				\nn})^2
			+
			\eta_{\ell+1}(\TT_{\ell+1} \backslash \TT_\ell ;
			u_\ell^{\mm, \nn})^2
			\\&
			\le
			\,
			\eta_\ell(\TT_{\ell+1} \cap \TT_\ell ; u_\ell^{\mm,
				\nn})^2
			+
			\qred^{2} \, \eta_\ell(\TT_\ell \backslash \TT_{\ell+1} ;
			u_\ell^{\mm, \nn})^2
			\\&
			=
			\,
			\eta_\ell(u_\ell^{\mm, \nn})^2
			-
			(1 - \qred^{2} ) \, \eta_\ell(\TT_\ell \backslash \TT_{\ell+1} ;
			u_\ell^{\mm, \nn})^2.
		\end{aligned}
	\end{equation}
	The D\"orfler marking in
	Algorithm~\ref{algorithm:afem}\ref{alg:mark} for the primal
	error estimator \(\eta\) and \(\MM_{\ell} \subseteq \TT_{\ell}
	\setminus \TT_{\ell+1}\)
	prove the contraction in~\eqref{eq1:estimator_reduction}
	\begin{equation}\label{eq3:estimator_reduction}
		\eta_{\ell+1}(u_\ell^{\mm, \nn})^2
		\le
		\eta_\ell(u_\ell^{\mm, \nn})^2
		-
		(1 - \qred^{2} ) \, \eta_\ell(\MM_\ell ; u_\ell^{\mm,
			\nn})^2
		\le
		q(\theta)^{2} \, \eta_\ell(u_\ell^{\mm,
			\nn})^2.
	\end{equation}
	For $(\ell+1, \mm, \nn) \in \QQ^u$, this
	and \eqref{eq3:stability} lead to
	\begin{align*}
		\begin{split}
			\eta_{\ell+1}(u_{\ell+1}^{\mm, \nn})
			 & \eqreff*{axiom:stability}
			\le
			\eta_{\ell+1}(u_\ell^{\mm, \nn})
			+
			\Cstab \, \vvvert u_{\ell+1}^{\mm, \nn} -
				u_\ell^{\mm, \nn} \vvvert
			\\
			 & \eqreff*{eq3:estimator_reduction}
			\le
			q(\theta) \, \eta_\ell(u_\ell^{\mm, \nn})
			+
			\Cstab \, \vvvert u_{\ell+1}^{\mm, \nn} -
				u_\ell^{\mm, \nn} \vvvert
			\\&
			\eqreff*{eq3:stability}
			\le
			q(\theta) \, \eta_\ell(u_\ell^{\mm, \nn})
			+
			4 \, \Cstab \, \vvvert u_{\ell+1}^\star - u_\ell^{\mm,
					\nn} \vvvert.
		\end{split}
	\end{align*}
	For \((\ell+1,
	\mmu, \nnu) \in \QQ^z\), we argue analogously to
	\eqref{eq5:combined-contraction} in order to obtain
	that
	\(\zeta_{\ell+1}(z_{\ell}^{\mmu, \nnu}) \le
	\zeta_{\ell}(z_{\ell}^{\mmu, \nnu})\).
	Together with \eqref{eq3:stability}, it follows that
	\begin{equation*}
		\zeta_{\ell+1}(z_{\ell+1}^{\mmu, \nnu})
		\eqreff{axiom:stability}\le
		\zeta_{\ell+1}(z_\ell^{\mmu, \nnu})
		+
		\Cstab \, \vvvert z_{\ell+1}^{\mmu, \nnu} -
			z_\ell^{\mmu, \nnu} \vvvert
		\eqreff{eq3:stability}\le
		\zeta_\ell(z_\ell^{\mmu, \nnu})
		+
		4 \, \Cstab \, \vvvert z_{\ell+1}^\star - z_\ell^{\mmu,
				\nnu} \vvvert.
	\end{equation*}
	The proof holds verbatim in the case of Dörfler marking for the dual error
	estimator, albeit with reversed roles. This concludes the proof.
\end{proof}

%

\section{Full linear convergence}\label{section:main_results}
This section presents full linear convergence of
Algorithm~\ref{algorithm:afem} as the first main result of this work.
Recall the goal-error estimate from~\eqref{eq:goalError:Estimate}
motivating the product structure of the respective primal and
dual error
components. Thus, we define the quasi-errors
\begin{subequations}\label{eq:quasi-error}
	\begin{align}
		\Eta_\ell^{m,n}
		      & \coloneqq
		\vvvert u_\ell^\star - u_\ell^{m,n} \vvvert
		+ \vvvert u_\ell^{m, \star} - u_\ell^{m,n} \vvvert
		+ \eta_{\ell}(u_\ell^{m,n})
		\quad &           & \text{for all} \quad (\ell, m, n) \in \QQ^u,
		\label{eq:quasi-error-primal}
		\\
		\Zeta_\ell^{\mu,\nu}
		      & \coloneqq
		\vvvert z_\ell^\star - z_\ell^{\mu,\nu} \vvvert
		+ \vvvert z_\ell^{\mu, \star} - z_\ell^{\mu,\nu} \vvvert
		+ \zeta_{\ell}(z_\ell^{\mu,\nu})
		      &           & \text{for all} \quad (\ell, \mu, \nu) \in \QQ^z.
		\label{eq:quasi-error-dual}
	\end{align}
\end{subequations}
The quasi-errors naturally extend to the full index set
\((\ell, k,
j) \in \QQ\) by
\begin{equation}\label{eq:quasi-error-extension}
	\begin{aligned}
		\Eta_\ell^{k, j}
		 & \coloneqq
		\begin{cases}
			\Eta_{\ell}^{k, \nn}
			\  & \text{if} \
			(\ell, k, 0) \in \QQ^u \text{ but } (\ell, k, j) \notin \QQ^u,
			\\
			\Eta_{\ell}^{\mm, \nn}
			\  & \text{if} \
			(\ell, k, 0) \notin \QQ^u,
		\end{cases}
		\\
		\Zeta_\ell^{k, j}
		 & \coloneqq
		\begin{cases}
			\Zeta_{\ell}^{k, \nnu}
			\  & \text{if} \
			(\ell, k, 0) \in \QQ^z
			\text{ but } (\ell, k, j) \notin \QQ^z,
			\\
			\Zeta_{\ell}^{\mmu, \nnu}
			\  & \text{if} \
			(\ell, k, 0) \notin \QQ^z.
		\end{cases}
	\end{aligned}
\end{equation}
The following theorem asserts full linear convergence of the quasi-error
product.
\begin{theorem}[full linear convergence]\label{lem:full_linear_convergence}
	Suppose that the estimators \(\eta\) and \(\zeta\) satisfy
	\eqref{axiom:stability}--\eqref{axiom:reliability} and
	\eqref{axiom:qm} and suppose~\eqref{axiom:orthogonality}.
	Recall
	\(\lamalg^\star\) and \(\qsymm\) from
	Lemma~\ref{lem:inexact_Zarantonello_contraction}. With the constant
	\(
	q(\theta)
	\)
	from Lemma~\ref{lemma:estimator_reduction}
	and
	\(
	\overline{q} \coloneqq \max \{q(\theta)^{1/2},
	(1 +\qsym)/2\}
	< 1
	\),
	let
	\begin{equation}\label{eq:choice_lambda}
		0
		<
		\lambda^\star
		\coloneqq
		\frac{(1-\qalg) \, (\overline{q} - \qsym) \, (1 - \overline{q})}{10
			\, \qalg \, \Cstab}.
	\end{equation}
	Then, for arbitrary
	marking parameter \(0 < \theta \le 1\) and
	any solver parameters \(\lamsym > 0\) and
	\(0 < \lamalg \le \lamalg^\star\) with \(\lamsym \lamalg \le
	\lambda^\star\),
	Algorithm~\ref{algorithm:afem} guarantees full linear convergence: There exist constants
	\(\Clin \ge 1\) and \(0 < \qlin < 1\) such that the
	quasi-error product satisfies,
	for all \((\ell, k, j),
	(\ell^\prime, k^\prime, j^\prime) \in \QQ\) with
	\(|\ell^\prime, k^\prime, j^\prime| \le |\ell, k, j|\)
	\begin{equation}\label{eq:full_linear_convergence}
		\Eta_\ell^{k,j} \, \Zeta_\ell^{k, j}
		\le
		\Clin \, \qlin^{|\ell, k, j| - |\ell^\prime, k^\prime, j^\prime|} \,
		\Eta_{\ell^\prime}^{k^\prime,j^\prime} \,
		\Zeta_{\ell^\prime}^{k^\prime,j^\prime}.
	\end{equation}
	The constants \(\Clin\) and \(\qlin\) depend only on \(\Cstab\),
	\(\Crel\), \(\Cmon\),
	\(C_{\textup{orth}}\),
	\(\Ccea\), \(\theta\), $\qred$,
	$\qsymm$, $\qsym$, $\qalg$,
	$\lamsym$, and $\lamalg$.
\end{theorem}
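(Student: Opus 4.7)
The plan is to obtain full linear convergence of the product $\Eta_\ell^{k,j}\Zeta_\ell^{k,j}$ with respect to the total step counter $|\cdot,\cdot,\cdot|$ by combining three stepwise estimates (one per nested loop) and then merging them through the tail-summability criterion from \cite{fps2023}. The nonlinearity of the product and the failure of full estimator reduction for both $\eta$ and $\zeta$ simultaneously at a mesh refinement step are the central difficulties.

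On a fixed mesh level $\ell$ I would first establish strict contraction of each factor along the two inner loops. For $n<\nn[\ell,m]$, the algebraic contraction~\eqref{eq:algebra_contraction}, stability~\eqref{axiom:stability} of $\eta_\ell$, and the a~posteriori estimate~\eqref{eq:aposteriori_algebra} yield $\Eta_\ell^{m,n}\le q_1\,\Eta_\ell^{m,n-1}$ with $q_1<1$. For $m<\mm[\ell]$, the inexact Zarantonello contraction~\eqref{eq:inexact_Zarantonello_contraction} combined with stability yields $\Eta_\ell^{m,\nn}\le q_2\,\Eta_\ell^{m-1,\nn}$ with $q_2<1$. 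The analogous bounds hold for $\Zeta$, so the product $\Eta_\ell^{k,j}\Zeta_\ell^{k,j}$ contracts at every step that is not a mesh refinement, with constants that depend only on $\qsymm$, $\qalg$, and $\Cstab$.

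The main obstacle is the transition $\ell\to\ell+1$. Lemma~\ref{lemma:estimator_reduction} only delivers reduction with factor $q(\theta)<1$ for the estimator whose Dörfler subset was selected in $\MM_\ell$; the other estimator satisfies only a plain stability inequality. For the reduced component (say the primal when $\MM_\ell^u\subseteq\MM_\ell$), I would combine the estimator reduction with the stopping criterion~\eqref{eq:m_stopping_criterion}, the final-iterate bound~\eqref{eq2:inexact_Zarantonello_contraction}, and the stability estimates of Lemma~\ref{lem:auxiliary_estimates} to derive an inequality of the abstract form
\begin{equation*}
\Eta_{\ell+1}^{\mm,\nn} \le \overline{q}\,\Eta_\ell^{\mm,\nn}+C_1\vvvert u_{\ell+1}^\star-u_\ell^\star\vvvert,
\end{equation*}
where the particular definition $\overline{q}=\max\{q(\theta)^{1/2},(1+\qsym)/2\}$ and the smallness $\lamsym\lamalg\le\lambda^\star$ from~\eqref{eq:choice_lambda} allow the perturbations generated by~\eqref{eq:m_stopping_criterion} and~\eqref{eq2:inexact_Zarantonello_contraction} to be absorbed into a genuine contraction factor. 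The non-reduced dual factor, by contrast, obeys only $\Zeta_{\ell+1}^{\mmu,\nnu}\le C_2\bigl(\Zeta_\ell^{\mmu,\nnu}+\vvvert z_{\ell+1}^\star-z_\ell^\star\vvvert\bigr)$ without any contraction, so a direct contraction argument for the product fails.

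To pass from these one-sided bounds to linear decay of the \emph{product}, I would invoke the tail-summability framework of \cite{fps2023}. The generalized quasi-orthogonality~\eqref{eq:quasi_orthogonality} makes the sequences $\vvvert u_{\ell+1}^\star-u_\ell^\star\vvvert^2$ and $\vvvert z_{\ell+1}^\star-z_\ell^\star\vvvert^2$ summable up to a sub-geometric factor, and, combined with reliability~\eqref{axiom:reliability}, the tail sums are controlled by $\Eta_\ell^{\mm,\nn}$ and $\Zeta_\ell^{\mmu,\nnu}$, respectively. Because within every level the non-marked factor still contracts strictly after a bounded number of inner iterations, each non-contracting transition of $\Eta\cdot\Zeta$ is dominated, after tail summation, by a constant multiple of the reference product. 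Applying the abstract ``tail-summability implies full linear convergence'' lemma of \cite{fps2023} to the scalar sequence $\Eta_\ell^{k,j}\Zeta_\ell^{k,j}$ ordered by $|\cdot,\cdot,\cdot|$ then yields~\eqref{eq:full_linear_convergence}, and tracking constants through the argument exhibits the listed dependencies of $\Clin$ and $\qlin$. The hardest technical point will be to verify tail-summability for the product rather than for the sum: the non-reduced factor at one refinement step must be controlled by a quasi-orthogonal tail of the \emph{other} factor, and this is precisely what forces the specific smallness condition~\eqref{eq:choice_lambda} on $\lamsym\lamalg$.
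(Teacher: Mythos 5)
Your overall architecture (treat the three nested indices separately, reduce full linear convergence to the tail-summability criterion of \cite{fps2023}, and use the generalized quasi-orthogonality~\eqref{eq:quasi_orthogonality} plus reliability to control the cross terms $\vvvert u_{\ell+1}^\star-u_\ell^\star\vvvert$, $\vvvert z_{\ell+1}^\star-z_\ell^\star\vvvert$) is indeed the paper's strategy. However, there is a genuine gap at the decisive step, the mesh transition $\ell\to\ell+1$. The criterion from \cite[Lemma~5]{fps2023} requires the one-step estimate $a_{\ell+1}\le q\,a_\ell+b_\ell$ with $q<1$ for the \emph{product} $a_\ell=\Eta_\ell\,\Zeta_\ell$; this is exactly what you declare unavailable (``a direct contraction argument for the product fails'') and then try to replace by the claim that the non-marked factor ``still contracts strictly after a bounded number of inner iterations'' on the next level. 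That replacement does not work: the inner loops only drive each quasi-error down to the size of its own estimator (stopping criteria \eqref{eq:m_stopping_criterion}, \eqref{eq:n_stopping_criterion}), and for the component whose Dörfler set was discarded the estimator need not decrease at all — it may stagnate over many consecutive levels, and which component is marked can switch from level to level. With only $\Eta_{\ell+1}\le\overline{q}\,\Eta_\ell+C_1\vvvert u_{\ell+1}^\star-u_\ell^\star\vvvert$ and $\Zeta_{\ell+1}\le C_2(\Zeta_\ell+\vvvert z_{\ell+1}^\star-z_\ell^\star\vvvert)$, the product inherits the factor $\overline{q}\,C_2$, which need not be below one, so tail-summability of the remainder alone does not yield \eqref{eq:summability_criterion}.

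The missing idea is the paper's weighted quasi-error $\Eta_\ell=\vvvert u_\ell^\star-u_\ell^{\mm,\nn}\vvvert+\gamma\,\eta_\ell(u_\ell^{\mm,\nn})$ (and analogously $\Zeta_\ell$) with a free weight $\gamma$ fixed in \eqref{eq:choice_gamma_lambda}: the marked component contracts with a factor $q_{\mathrm{ctr}}$, the unmarked component merely expands by the controlled constant $C(\gamma,\lambda)\le 1/\overline{q}$ (this absorption of the $\tfrac{2\qalg}{1-\qalg}\lamalg\lamsym\,\eta_\ell$ term from \eqref{eq2:inexact_Zarantonello_contraction} is the actual role of the threshold $\lambda^\star$ in \eqref{eq:choice_lambda}, not the control of one factor by a quasi-orthogonal tail of the other), and the product of the two bounds gives $q_{\mathrm{ctr}}'=C(\gamma,\lambda)\,q_{\mathrm{ctr}}<1$, i.e.\ a genuine contraction of $\Eta_\ell\,\Zeta_\ell$ up to the quasi-orthogonality remainder \eqref{eq:contraction_remainder}. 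You would also need the quasi-monotonicity $\Eta_{\ell+M}\lesssim\Eta_\ell$, $\Zeta_{\ell+M}\lesssim\Zeta_\ell$ (Step~3 of Lemma~\ref{lem:contraction_remainder}) to verify the hypothesis $b_{\ell+M}\le C_1 a_\ell$ of \eqref{eq:summability_criterion}, which your sketch does not address. Finally, your claimed stepwise contractions $\Eta_\ell^{m,n}\le q_1\Eta_\ell^{m,n-1}$ and $\Eta_\ell^{m,\nn}\le q_2\Eta_\ell^{m-1,\nn}$ are false as stated, since the components $\vvvert u_\ell^\star-u_\ell^{m,n}\vvvert$ and $\eta_\ell(u_\ell^{m,n})$ do not contract under a single solver step; what holds (and suffices) are the quasi-contractions with multiplicative constants of Lemmas~\ref{lem:quasi_contraction_Eta_ell^k} and \ref{lem:quasi_contraction_Eta_ell^kj}, proved via the failure of the stopping criteria, where again only one of the two factors decays and the other is merely bounded.
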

Three lemmas are required to prove Theorem~\ref{lem:full_linear_convergence}.
The characterization of \(R\)-linear convergence
from~\cite[Lemma~5 and 10]{fps2023} is the primary tool for the
proof
of Theorem~\ref{lem:full_linear_convergence};
see \eqref{eq:summability_criterion} below.
The proof of Theorem~\ref{lem:full_linear_convergence} departs with
the
contraction of the quasi-error for the final iterates of the
inexact Zarantonello loop up to a remainder on the mesh
level
\(\ell\). To this end, we define the simplified weighted
quasi-error
\begin{equation}\label{eq:def:Eta_ell}
	\Eta_\ell
	\coloneqq
	\bigl[\vvvert u_\ell^\star - u_\ell^{\mm, \nn} \vvvert + \gamma \,
		\eta_\ell(u_\ell^{\mm, \nn})\bigr],
	\,
	\Zeta_\ell
	\coloneqq \bigl[\vvvert z_\ell^\star -
		z_\ell^{\mmu, \nnu} \vvvert + \gamma \,
		\zeta_\ell(z_\ell^{\mmu, \nnu})\bigr]
	\,
	\text{ for all \((\ell,\kk,\jj) \in \QQ\)},
\end{equation}
where \(\gamma > 0\) is a free parameter chosen
in~\eqref{eq:choice_gamma_lambda}
below.
This quasi-error quantity satisfies
contraction up to a tail-summable remainder due to estimator
reduction~\eqref{eq1:estimator_reduction}--\eqref{eq2:estimator_reduction}.
\begin{lemma}[contraction in mesh level up to tail-summable remainder]
	\label{lem:contraction_remainder}
	Under the assumptions of Theorem~\ref{lem:full_linear_convergence}, there
	exists \(0 < q < 1\) such that the quasi-error
	product \(\Eta_\ell \, \Zeta_{\ell}\)
	from~\eqref{eq:def:Eta_ell} satisfies contraction up to
	a remainder \(R_\ell \ge 0\),
	\begin{align}\label{eq:contraction_remainder}
		\Eta_{\ell+1} \, \Zeta_{\ell+1}
		\le
		q \, \Eta_\ell \, \Zeta_{\ell}  + q \, R_\ell
		\quad \text{for all } (\ell+1, \kk,\jj) \in \QQ.
	\end{align}
	The remainder \(R_\ell\) satisfies
	\begin{equation}\label{eq:summability_remainder}
		R_{\ell+M} \lesssim \Eta_{\ell} \, \Zeta_{\ell}
		\, \text{ and } \,
		\sum_{\ell'=\ell}^{\ell + M} R_{\ell'}^2
		\lesssim
		(M + 1)^{1-\delta} \,
		\Eta_{\ell}^{2} \, \Zeta_{\ell}^{2}
		\,
		\text{ for all
			\(\ell, M \in \N_0\) with \(\ell + M < \elll\)}.
	\end{equation}
\end{lemma}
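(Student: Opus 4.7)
The plan is to reduce the product $\Eta_{\ell+1}\Zeta_{\ell+1}$ to a perturbed contraction by establishing one for each factor separately and then multiplying. For the primal factor, the combination of the inexact Zarantonello estimate~\eqref{eq2:inexact_Zarantonello_contraction} (with the bound $\vvvert u_{\ell+1}^\star - u_{\ell+1}^{\mm-1,\nn}\vvvert \le \vvvert u_{\ell+1}^\star - u_\ell^{\mm,\nn}\vvvert$ from~\eqref{eq2:stability}) and the estimator reduction~\eqref{eq1:estimator_reduction} (valid when primal Dörfler marking is used, i.e., $\MM_\ell^u = \overline{\MM}_\ell^u$) produces an inequality of the form
\begin{equation*}
\Eta_{\ell+1} \le \bigl[\qsym + 4\Cstab(c_\lambda + \gamma)\bigr]\,\vvvert u_{\ell+1}^\star - u_\ell^{\mm,\nn}\vvvert + \frac{q(\theta)(c_\lambda + \gamma)}{\gamma}\,\gamma\,\eta_\ell(u_\ell^{\mm,\nn}),
\end{equation*}
with $c_\lambda \coloneqq 2\qalg\lamalg\lamsym/(1-\qalg)$. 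The choice $\overline{q} = \max\{q(\theta)^{1/2}, (1+\qsym)/2\} < 1$ combined with the smallness assumption~\eqref{eq:choice_lambda} on $\lamsym\lamalg$ admits a $\gamma > 0$ so that both coefficients are bounded by $\overline{q}$; the triangle inequality $\vvvert u_{\ell+1}^\star - u_\ell^{\mm,\nn}\vvvert \le \vvvert u_\ell^\star - u_\ell^{\mm,\nn}\vvvert + \vvvert u_{\ell+1}^\star - u_\ell^\star\vvvert$ then yields $\Eta_{\ell+1} \le \overline{q}\,\Eta_\ell + \overline{q}\,\vvvert u_{\ell+1}^\star - u_\ell^\star\vvvert$. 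When only dual Dörfler is active, the primal reduction~\eqref{eq2:estimator_reduction} degenerates to mere stability and only $\Eta_{\ell+1} \le C_1 \Eta_\ell + C_2 \vvvert u_{\ell+1}^\star - u_\ell^\star\vvvert$ is obtained. Symmetric reasoning yields the corresponding dichotomy for $\Zeta_{\ell+1}$.

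By construction in Algorithm~\ref{algorithm:afem}\ref{alg:mark}, the minimum-cardinality rule forces at least one of $\MM_\ell^u = \overline{\MM}_\ell^u$ or $\MM_\ell^z = \overline{\MM}_\ell^z$, so in every refinement step at least one of $\Eta_{\ell+1}$ or $\Zeta_{\ell+1}$ contracts with factor $\overline{q}$ while the other is merely stable. Multiplying these two estimates gives
\begin{equation*}
\Eta_{\ell+1}\Zeta_{\ell+1} \le q\,\Eta_\ell\Zeta_\ell + q\,R_\ell,
\end{equation*}
with some $0 < q < 1$ (possibly after further shrinking of $\lamsym\lamalg$), where the remainder $R_\ell$ is a linear combination of the cross terms $\Zeta_\ell\,\vvvert u_{\ell+1}^\star - u_\ell^\star\vvvert$ and $\Eta_\ell\,\vvvert z_{\ell+1}^\star - z_\ell^\star\vvvert$, plus the product term $\vvvert u_{\ell+1}^\star - u_\ell^\star\vvvert\,\vvvert z_{\ell+1}^\star - z_\ell^\star\vvvert$.

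The pointwise estimate $R_{\ell+M} \lesssim \Eta_\ell\Zeta_\ell$ follows from reliability~\eqref{axiom:reliability}, quasi-monotonicity~\eqref{axiom:qm}, and the contraction of the inner loops, which jointly yield $\vvvert u_{\ell'+1}^\star - u_{\ell'}^\star\vvvert \lesssim \vvvert u^\star - u_{\ell'}^\star\vvvert \lesssim \Eta_{\ell'} \lesssim \Eta_\ell$, with the symmetric bound on the dual side. The main obstacle, and the reason the generalized quasi-orthogonality~\eqref{eq:quasi_orthogonality} enters, is the squared summability: a clean Pythagorean identity is unavailable in the nonsymmetric setting, so the cross terms cannot be decoupled cleanly. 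Invoking~\eqref{eq:quasi_orthogonality} one obtains $\sum_{\ell'=\ell}^{\ell+M}\vvvert u_{\ell'+1}^\star - u_{\ell'}^\star\vvvert^2 \lesssim (M+1)^{1-\delta}\,\vvvert u^\star - u_\ell^\star\vvvert^2 \lesssim (M+1)^{1-\delta}\,\Eta_\ell^2$ and analogously for the dual; a Cauchy--Schwarz step on each cross term, together with the uniform bounds $\Eta_{\ell'} \lesssim \Eta_\ell$ and $\Zeta_{\ell'} \lesssim \Zeta_\ell$, delivers $\sum_{\ell'=\ell}^{\ell+M} R_{\ell'}^2 \lesssim (M+1)^{1-\delta}\,\Eta_\ell^2\Zeta_\ell^2$ as claimed. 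The hardest piece is the careful bookkeeping required so that the implicit constants arising from $\gamma$, $c_\lambda$, $\Cstab$, and $\Crel$ can be absorbed without disturbing the contraction factor $q$, and that the sub-linearly growing weight $(M+1)^{1-\delta}$ remains compatible with the tail-summability framework of~\cite{fps2023}.
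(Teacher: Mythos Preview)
Your overall strategy matches the paper's: derive a one-sided contraction for the factor whose estimator satisfies D\"orfler marking, a mere stability bound for the other, multiply, and split off the Galerkin increments $\vvvert u_{\ell+1}^\star-u_\ell^\star\vvvert$, $\vvvert z_{\ell+1}^\star-z_\ell^\star\vvvert$ into $R_\ell$. The use of quasi-orthogonality~\eqref{axiom:orthogonality} for the squared tail sum is also exactly what the paper does.

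There is, however, a genuine gap. You invoke the uniform bounds $\Eta_{\ell'}\lesssim\Eta_\ell$ and $\Zeta_{\ell'}\lesssim\Zeta_\ell$ for all $\ell'\ge\ell$ as if they followed from ``reliability, quasi-monotonicity~\eqref{axiom:qm}, and the contraction of the inner loops''. They do not. Axiom~\eqref{axiom:qm} gives quasi-monotonicity only of $\eta_{\ell'}(u_{\ell'}^\star)$, not of $\Eta_{\ell'}=\vvvert u_{\ell'}^\star-u_{\ell'}^{\mm,\nn}\vvvert+\gamma\,\eta_{\ell'}(u_{\ell'}^{\mm,\nn})$; the inner-loop contractions act within a fixed mesh and say nothing about how $\vvvert u_{\ell'}^\star-u_{\ell'}^{\mm,\nn}\vvvert$ behaves as $\ell'$ grows. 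Iterating your own per-step bounds does not help either: on levels where the \emph{dual} D\"orfler criterion is the one that holds, the primal factor only enjoys $\Eta_{\ell'+1}\le C_1\Eta_{\ell'}+\dots$ with $C_1>1$, so na\"ive iteration blows up.

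The paper closes this gap in a separate step (its Step~3): it derives, \emph{independently of which marking is active}, a recursion
\[
\vvvert u_{\ell'}^\star-u_{\ell'}^{\mm,\nn}\vvvert
\le q\,\vvvert u_{\ell'-1}^\star-u_{\ell'-1}^{\mm,\nn}\vvvert
+C\,\eta_{\ell}(u_{\ell}^\star)
\]
with a genuine contraction factor $0<q<1$ coming precisely from the smallness assumption $\lamsym\lamalg\le\lambda^\star$ in~\eqref{eq:choice_lambda}. The point is that the inexact Zarantonello bound~\eqref{eq2:inexact_Zarantonello_contraction} combined with the \emph{unconditional} estimator stability $\eta_{\ell'}(u_{\ell'-1}^{\mm,\nn})\le\eta_{\ell'-1}(u_{\ell'-1}^{\mm,\nn})$ (from~\eqref{axiom:stability}--\eqref{axiom:reduction}, no D\"orfler needed) produces this recursion; the geometric series then yields $\vvvert u_{\ell+M}^\star-u_{\ell+M}^{\mm,\nn}\vvvert\lesssim\Eta_\ell$ and hence $\Eta_{\ell+M}\lesssim\Eta_\ell$. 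Without this argument your bounds $R_{\ell+M}\lesssim\Eta_\ell\Zeta_\ell$ and $\sum R_{\ell'}^2\lesssim(M+1)^{1-\delta}\Eta_\ell^2\Zeta_\ell^2$ are unproven.
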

\begin{proof}
	The proof consists of four steps.

	\textbf{Step~1 (choice of constants).}
	Recall the constants
	\(0 < q(\theta) < 1\) from Lemma~\ref{lemma:estimator_reduction} and
	\(\lambda^\star > 0\) and \(0 < \overline{q} < 1\) defined in the statement of
	Theorem~\ref{lem:full_linear_convergence} and
	define the constants
	\begin{equation*}
		C(\gamma, \lambda)
		\coloneqq
		1 + \frac{2 \, \qalg}{1 - \qalg} \, \frac{\lambda}{\gamma} >
		1
		\text{ and }
		0 < \! q_{\mathrm{ctr}}
		\coloneqq \!
		\max
		\bigl\{
		\qsym + 4 \Cstab \, C(\gamma, \lambda) \, \gamma, \,
		q(\theta) C(\gamma, \lambda)
		\bigr\}.
	\end{equation*}
	Elementary calculations show that
	the choice of
	\begin{equation}\label{eq:choice_gamma_lambda}
		\gamma
		\coloneqq
		\frac{\overline{q} \, (\overline{q}-\qsym)}{4 \, \Cstab}
		< 1
	\end{equation}
	ensures
	\(
	\qsym \, C(\gamma, \lambda) + 4 \, \Cstab \, \gamma \, C(\gamma, \lambda)^2 < 1
	\)
	as well as, for all \(0 < \lambda < \lambda^\star\),
	\begin{equation}\label{eq:estimate_C}
		C(\gamma, \lambda) = 1 + \frac{2 \, \qalg}{1-\qalg}
		\frac{\lambda}{\gamma} < 1 + \frac{1-\overline{q}}{\overline{q}} =
		\frac{1}{\overline{q}} \le \frac{1}{q(\theta)^{1/2}}.
	\end{equation}
	Consequently,
	we have
	\(q(\theta) \, C(\gamma, \lambda)^2 < 1\)
	and thus
	\(0 < q_{\mathrm{ctr}}^\prime \coloneqq C(\gamma, \lambda) \, q_{\mathrm{ctr}} < 1\)
	and \(q_{\mathrm{ctr}} < 1\).

	\textbf{Step~2 (contraction of \(\boldsymbol{\Eta_\ell}\) and
		\(\boldsymbol{\Zeta_\ell}\)).}
	Abbreviate \(\lambda \coloneqq
	\lamalg \,
	\lamsym\). Recall that marking in
	Algorithm~\ref{algorithm:afem}\ref{alg:mark} ensures that the
	estimate~\eqref{eq1:estimator_reduction}
	or~\eqref{eq2:estimator_reduction}
	hold. If \eqref{eq1:estimator_reduction} is satisfied,
	the quasi-contraction of the inexact
	Zarantonello
	iteration~\eqref{eq2:inexact_Zarantonello_contraction} for the final
	iterate, the stability estimate~\eqref{eq2:stability}, and the
	estimator
	reduction~\eqref{eq1:estimator_reduction} lead,
	for all \((\ell+1, \kk, \jj) \in \QQ^{u}\), to
	\begin{equation}\label{eq:contraction-primal}
		\begin{aligned}
			\Eta_{\ell+1}
			 & \eqreff*{eq2:inexact_Zarantonello_contraction}
			\le
			\qsym \, \vvvert u_{\ell+1}^\star - u_{\ell+1}^{\mm-1, \nn} \vvvert
			+
			C(\gamma, \lambda) \, \gamma \,
			\eta_{\ell+1}(u_{\ell+1}^{\mm, \nn})
			\\&
			\eqreff*{eq2:stability}
			\le
			\qsym \, \vvvert u_{\ell+1}^\star - u_\ell^{\mm, \nn} \vvvert
			+
			C(\gamma, \lambda) \, \gamma \,
			\eta_{\ell+1}(u_{\ell+1}^{\mm, \nn})
			\\&
			\eqreff*{eq1:estimator_reduction}
			\le
			\bigl( \qsym + 4 \, \Cstab \, C(\gamma, \lambda) \, \gamma \bigr)
			\,
			\vvvert u_{\ell+1}^\star - u_\ell^{\mm, \nn} \vvvert
			+
			q(\theta) \, C(\gamma, \lambda) \, \gamma \,
			\eta_\ell(u_\ell^{\mm, \nn})
			\\&
			\le
			q_{\mathrm{ctr}} \,
			\bigl[ \vvvert u_{\ell+1}^\star - u_\ell^{\mm, \nn} \vvvert +
				\gamma \, \eta_\ell(u_\ell^{\mm, \nn}) \bigr].
		\end{aligned}
	\end{equation}
	The same arguments
	yield, for all \((\ell+1, \mmu, \nnu) \in \QQ^z\),
	\begin{equation}\label{eq:step3:5}
		\begin{aligned}
			\Zeta_{\ell+1}
			 & \eqreff{eq2:inexact_Zarantonello_contraction}
			\le
			\qsym \, \vvvert z_{\ell+1}^\star - z_{\ell+1}^{\mmu-1,
					\nnu} \vvvert
			+
			C(\gamma, \lambda) \, \gamma \,
			\zeta_{\ell+1}(z_{\ell+1}^{\mmu, \nnu})
			\\&
			\eqreff*{eq2:stability}
			\le
			\qsym \vvvert z_{\ell+1}^\star - z_\ell^{\mmu, \nnu} \vvvert
			+
			C(\gamma, \lambda) \gamma \,
			\zeta_{\ell+1}(z_{\ell+1}^{\mmu, \nnu})
			\\&
			\eqreff{eq1:estimator_reduction}\le
			C(\gamma, \lambda) \,
			\bigl[q_{\textup{ctr}} \, \vvvert z_{\ell+1}^\star
				-
				z_\ell^{\mmu, \nnu} \vvvert +
				\gamma \,
				\zeta_\ell(z_\ell^{\mmu, \nnu}) \bigr].
		\end{aligned}
	\end{equation}
	For
	\(
	0
	<
	q_{\mathrm{ctr}}
	<
	q_{\mathrm{ctr}}^\prime
	=
	C(\gamma, \lambda) \, q_{\mathrm{ctr}}
	< 1
	\),
	the product of~\eqref{eq:contraction-primal} and
	\eqref{eq:step3:5}
	reads
	\begin{equation}\label{eq:step3:6}
		\medmuskip = 4mu
		\begin{aligned}
			\Eta_{\ell+1} \, \Zeta_{\ell+1}
			 & \le
			C(\gamma, \lambda) \, q_{\mathrm{ctr}} \,
			\bigl[ \vvvert u_{\ell+1}^\star - u_\ell^{\mm, \nn} \vvvert +
				\gamma \, \eta_\ell(u_\ell^{\mm, \nn}) \bigr]
			\bigl[\vvvert z_{\ell+1}^\star -
				z_\ell^{\mmu, \nnu} \vvvert + \gamma \, \zeta_\ell(z_\ell^{\mmu,
					\nnu}) \bigr]
			\\
			 & =
			q_{\mathrm{ctr}}^\prime \bigl[ \vvvert u_{\ell+1}^\star -
				u_\ell^{\mm, \nn} \vvvert + \gamma \,
				\eta_\ell(u_\ell^{\mm, \nn}) \bigr] \bigl[
				\vvvert z_{\ell+1}^\star - z_\ell^{\mmu, \nnu} \vvvert + \gamma
				\,
				\zeta_\ell(z_\ell^{\mmu, \nnu}) \bigr].
		\end{aligned}
	\end{equation}
	If \eqref{eq2:estimator_reduction} is satisfied, we obtain
	the same estimate with reversed roles in the derivation.

	\textbf{Step~3 (quasi-monotonicity of
		\(\boldsymbol{\Eta_{\ell}}\) and
		\(\boldsymbol{\Zeta_{\ell}}\))}.
	The Céa
	estimate~\eqref{eq:cea}, nestedness of
	the discrete spaces,
	reliability~\eqref{axiom:reliability},
	quasi-monotonicity~\eqref{axiom:qm}, stability~\eqref{axiom:stability},
	and the definition~\eqref{eq:def:Eta_ell} prove, for all
	\(\ell \le
	\ell' \le
	\ell^{''} \le \elll\) with \((\ell, \mm, \nn) \in \QQ^u\) and \((\ell, \mmu,
	\nnu) \in \QQ^z\), that
	\begin{subequations}\label{eq:estimator_quasiError}
		\medmuskip = 1.1mu
		\begin{align}
			\vvvert u_{\ell''}^\star - u_{\ell'}^\star \vvvert
			 & \eqreff*{eq:cea}\lesssim
			\vvvert u^\star - u_{\ell'}^\star \vvvert
			\eqreff{axiom:reliability}
			\lesssim
			\eta_{\ell'}(u_{\ell'}^\star)
			\eqreff{axiom:qm}
			\lesssim
			\eta_{\ell}(u_{\ell}^\star)
			\eqreff{axiom:stability}
			\lesssim
			\eta_\ell(u_\ell^{\mm, \nn}) +
			\vvvert u_\ell^\star - u_\ell^{\mm, \nn} \vvvert
			\eqreff{eq:def:Eta_ell}
			\simeq
			\Eta_\ell, \label{eq:estimator_EtaEll_primal}
			\\
			\vvvert z_{\ell''}^\star - z_{\ell'}^\star \vvvert
			 & \eqreff*{eq:cea}\lesssim
			\vvvert z^\star - z_{\ell'}^\star \vvvert
			\eqreff{axiom:reliability}
			\lesssim
			\zeta_{\ell'}(z_{\ell'}^\star)
			\eqreff{axiom:qm}
			\lesssim
			\zeta_{\ell}(z_{\ell}^\star)
			\eqreff{axiom:stability}
			\lesssim
			\zeta_\ell(z_\ell^{\mmu, \nnu}) +
			\vvvert z_\ell^\star - z_\ell^{\mmu, \nnu} \vvvert
			\eqreff{eq:def:Eta_ell}
			\simeq
			\Zeta_\ell, \label{eq:estimator_ZetaEll_dual}
		\end{align}
	\end{subequations}
	where the hidden constants depend only on \(\gamma^{-1}\),
	\(\Ccea\), \(\Cstab\), \(\Crel\), and \(\Cmon\).
	
		Similarly to~\eqref{eq:contraction-primal}, the inexact 
		Zarantonello contraction~\eqref{eq2:inexact_Zarantonello_contraction}, 
		stability~\eqref{axiom:stability}, and the stability 
		estimate~\eqref{eq3:stability} yield for \(\ell < \ell' < \elll\) and \(\lambda = \lamsym \, \lamalg\),
		\begin{equation}\label{eq:contraction-primal-error}
			\begin{aligned}
				\vvvert u_{\ell'}^\star - u_{\ell'}^{\mm, \nn} \vvvert
				\, &\eqreff*{eq2:inexact_Zarantonello_contraction}
				\le
				\,
				\qsym \, \vvvert u_{\ell'}^\star - u_{\ell'}^{\mm-1, \nn} \vvvert
				+
				2 \frac{\qalg}{1-\qalg} \, \lambda \, \eta_{\ell'}(u_{\ell'}^{\mm, \nn})
				\\
				& \eqreff*{eq:inexact_Zarantonello_contraction}\le
				\,
				\qsym \, \qsymm^{\mm[\ell'] - 1} \, \vvvert u_{\ell'}^\star - u_{\ell' - 1}^{\mm, \nn} \vvvert 
				+ 2 \frac{\qalg}{1-\qalg} \, \lambda \, \eta_{\ell'}(u_{\ell'}^{\mm, \nn})
				\\
				 &\stackrel{\mathclap{\eqref{axiom:stability}, \, \eqref{eq5:combined-contraction}}}{\le} \quad \,
				\,
				\qsym \, \vvvert u_{\ell'}^\star - u_{\ell' - 1}^{\mm, \nn} \vvvert
				+
				2 \frac{\qalg}{1-\qalg} \, \lambda \,  \eta_{\ell' - 1}(u_{\ell' - 1}^{\mm, \nn})
				\\
				&\quad + 2 \Cstab \frac{\qalg}{1-\qalg} \, \lambda \,  \vvvert u_{\ell'}^{\mm, \nn} - u_{\ell'-1}^{\mm, \nn} \vvvert
				\\
				 & \eqreff*{eq3:stability}
				\le
				\,
				\Bigl( \qsym + 8 \Cstab \, \frac{\qalg}{1-\qalg} \, \lambda \Bigr) \, \vvvert u_{\ell'}^\star - u_{\ell' - 1}^{\mm, \nn} \vvvert
				+
				2 \frac{\qalg}{1-\qalg} \, \lambda \,  \eta_{\ell'-1}(u_{\ell'-1}^{\mm, \nn})
				\\
				 & \eqreff*{axiom:stability}
				\le
				\,
				\Bigl( \qsym + 10 \Cstab \, \frac{\qalg}{1-\qalg} \, \lambda \Bigr) \, \vvvert u_{\ell' - 1}^\star - u_{\ell' - 1}^{\mm, \nn} \vvvert
				\\
				 &\quad+
				\Bigl( \qsym + 8 \Cstab \, \frac{\qalg}{1-\qalg} \, \lambda \Bigr)
				\vvvert u_{\ell'}^\star - u_{\ell' - 1}^\star \vvvert
				+
				2 \frac{\qalg}{1-\qalg} \, \lambda \,  \eta_{\ell' - 1}(u_{\ell' - 1}^\star)
				\\
				&\eqreff*{eq:estimator_EtaEll_primal}{\le}\,
				\Bigl( \qsym + 10 \Cstab \, \frac{\qalg}{1-\qalg} \, \lambda \Bigr) \, \vvvert u_{\ell' - 1}^\star - u_{\ell' - 1}^{\mm, \nn} \vvvert
				\\
				&\quad +
				\Bigl( \qsym + 2 \, \frac{\qalg}{1-\qalg} \, \lambda \, \Cmon  \bigl[ 
					4 \, \Cstab \, ( 1 + \Ccea) \, \Crel  + 1
				\bigr]\Bigr) \,
				\eta_{\ell}(u_{\ell}^\star).
			\end{aligned}
		\end{equation}
		The choice of \(\lambda \le \lambda^\star\) with \(\lambda^\star\) from~\eqref{eq:choice_lambda} ensures
		\begin{equation}
			0 < q \coloneqq  \qsym + 10 \, \Cstab \, \frac{\qalg}{1-\qalg} \, \lambda < 1.
		\end{equation}
		With \(C 
		\coloneqq \qsym + 2 \, \frac{\qalg}{1-\qalg} \, \lambda \, \Cmon  \bigl[ 
					4 \, \Cstab \, ( 1 + \Ccea) \, \Crel + 1 \bigr]\), a successive application of~\eqref{eq:contraction-primal-error} and the geometric series shows
		\begin{equation}\label{eq:quasimon_primal_error}
			\begin{aligned}
				\vvvert u_{\ell+M}^\star - u_{\ell+M}^{\mm, \nn} \vvvert
				 & \eqreff*{eq:contraction-primal-error}\le
				q \, \vvvert u_{\ell+M-1}^\star - u_{\ell+M-1}^{\mm, \nn} \vvvert 
				+ C \, \eta_{\ell}(u_{\ell}^\star)
				\\
				 & \eqreff*{eq:contraction-primal-error}\le
				q^{M} \, \vvvert u_{\ell}^\star - u_{\ell}^{\mm, \nn} \vvvert  + 
				\Bigl(C \, \sum_{j=0}^{M-1} q^{j} \Bigr) \, \eta_{\ell}(u_{\ell}^{\star})
				\\
				 &\lesssim
				 \vvvert u_{\ell}^\star - u_{\ell}^{\mm, \nn} \vvvert
				+ \eta_{\ell}(u_{\ell}^{\star})
				 \eqreff{eq:estimator_EtaEll_primal}\lesssim \
				\Eta_{\ell}
				\,
				\text{ for all \(\ell, M \in \N_0\) with \(\ell+M < \elll\)}.
			\end{aligned}
		\end{equation}
		Hence, we have quasi-monotonicity of the quasi-error
		\begin{subequations}\label{eq:quasimon_error}
		\begin{equation}\label{eq:quasimon_primal}
			\begin{aligned}
				\Eta_{\ell+M}
				 & \simeq
				\vvvert u_{\ell+M}^\star - u_{\ell+M}^{\mm, \nn} \vvvert
				+
				\eta_{\ell+M}(u_{\ell+M}^{\mm, \nn})
				\eqreff{axiom:stability}
				\simeq
				\vvvert u_{\ell+M}^\star - u_{\ell+M}^{\mm, \nn} \vvvert
				+
				\eta_{\ell+M}(u_{\ell+M}^{\star})
				\\
				 & \eqreff*{eq:estimator_EtaEll_primal}
				\lesssim \
				\vvvert u_{\ell+M}^\star - u_{\ell+M}^{\mm, \nn} \vvvert + \Eta_{\ell}
				\eqreff{eq:quasimon_primal_error}
				\lesssim \Eta_{\ell}
				\text{ for all \(\ell, M \in \N_0\) with \(\ell+M < \elll\)}.
			\end{aligned}
		\end{equation}
	The same argument proves
	\begin{equation}\label{eq:quasimon_dual}
		\Zeta_{\ell+M} \lesssim \Zeta_{\ell}
		\quad
		\text{for all \(\ell, M \in \N_0\) with \(\ell+M < \elll\)}.
	\end{equation}
	\end{subequations}

	\textbf{Step~4 (contraction of \(\boldsymbol{\Eta_{\ell} \,
			\Zeta_{\ell}}\)
		up to tail-summable remainder).}
	Define
	\begin{align*}
		R_\ell & \coloneqq \vvvert u_{\ell+1}^\star - u_\ell^{\star} \vvvert \,
		\bigl[\vvvert z_\ell^\star - z_\ell^{\mmu, \nnu} \vvvert +
			\vvvert z_{\ell+1}^\star - z_\ell^{\star} \vvvert +
			\gamma \, \zeta_\ell(z_\ell^{\mmu, \nnu})\bigr]
		\\
		       & \quad+ \vvvert z_{\ell+1}^\star - z_\ell^{\star} \vvvert
		\bigl[\vvvert u_\ell^\star - u_\ell^{\mm, \nn} \vvvert +
			\vvvert u_{\ell+1}^\star - u_\ell^{\star} \vvvert + \gamma \,
			\eta_\ell(u_\ell^{\mm, \nn})\bigr].
	\end{align*}
	The contraction~\eqref{eq:step3:6} proves
	the
	quasi-contraction~\eqref{eq:contraction_remainder} via
	\begin{align*}
		\begin{split}
			\Eta_{\ell+1} \, \Zeta_{\ell+1}
			 & \eqreff*{eq:step3:6}
			\le
			q_{\mathrm{ctr}}^\prime
			\bigl[
				\vvvert u_{\ell+1}^\star - u_\ell^{\mm, \nn} \vvvert + \gamma
				\, \eta_\ell(u_\ell^{\mm, \nn})
				\bigr]
			\bigl[
				\vvvert z_{\ell+1}^\star - z_\ell^{\mmu, \nnu} \vvvert + \gamma
				\, \zeta_\ell(z_\ell^{\mmu, \nnu})
				\bigr]
			\\
			 & \le
			q_{\mathrm{ctr}}^\prime \,
			\bigl[
				\vvvert u_{\ell}^\star - u_\ell^{\mm, \nn} \vvvert
				+ \vvvert u_{\ell+1}^\star - u_{\ell}^\star \vvvert
				+ \gamma \, \eta_\ell(u_\ell^{\mm, \nn})
				\bigr]
			\\
			 & \quad \qquad \times
			\bigl[
				\vvvert z_{\ell}^\star - z_\ell^{\mmu, \nnu} \vvvert
				+ \vvvert z_{\ell+1}^\star - z_{\ell}^\star \vvvert
				+\gamma \, \zeta_\ell(z_\ell^{\mmu, \nnu})
				\bigr]
			\\
			 & \le
			q_{\mathrm{ctr}}^\prime \, \Eta_\ell \, \Zeta_{\ell}
			+
			q_{\mathrm{ctr}}^\prime \, R_\ell.
		\end{split}
	\end{align*}
	The
	remainder term \(R_\ell\) can be estimated
	by~\eqref{eq:estimator_quasiError} and the Young inequality
	to show
	\begin{equation}\label{eq:remainder}
		R_\ell^2
		\eqreff{eq:estimator_quasiError}\lesssim
		\bigl(\vvvert u_{\ell+1}^\star - u_\ell^{\star} \vvvert \, \Zeta_{\ell} +
		\vvvert z_{\ell+1}^\star - z_\ell^{\star} \vvvert \, \Eta_{\ell}
		\bigr)^{2}
		\lesssim
		\vvvert u_{\ell+1}^\star - u_\ell^{\star} \vvvert^2 \, \Zeta_{\ell}^2 +
		\vvvert z_{\ell+1}^\star - z_\ell^{\star} \vvvert^2 \, \Eta_{\ell}^2.
	\end{equation}
	Thus, the
	quasi-monotonicity~\eqref{eq:quasimon_error}
	verifies
	\begin{equation*}
		R_{\ell+M} \lesssim \Eta_{\ell+M} \, \Zeta_{\ell+M}
		\eqreff{eq:quasimon_error}\lesssim
		\Eta_{\ell} \, \Zeta_\ell
		\quad
		\text{for all \(\ell, M \in \N\) with \(\ell+M < \elll\).}
	\end{equation*}
	Quasi-orthogonality~\eqref{axiom:orthogonality},
	reliability~\eqref{axiom:reliability},
	and the estimates~\eqref{eq:estimator_quasiError} imply,
	for
	all
	\(\ell, M \in \N_0\) with \(\ell + M < \elll\),
	\begin{equation}\label{eq1:summability_remainder}
		\medmuskip = 1mu
		\begin{aligned}
			\sum_{\ell^\prime=\ell}^{\ell+M}
			\vvvert u_{\ell'+1}^\star - u_{\ell'}^\star \vvvert^2
			 & \eqreff*{axiom:orthogonality}
			\lesssim
			(M+1)^{1-\delta} \,
			\vvvert u^\star - u_{\ell}^\star \vvvert^2
			\eqreff{axiom:reliability}
			\lesssim
			(M+1)^{1-\delta} \, \eta_{\ell}(u_{\ell}^\star)^2
			\eqreff{eq:estimator_EtaEll_primal}
			\lesssim
			(M+1)^{1-\delta} \, \Eta_{\ell}^2,
			\\
			\sum_{\ell^\prime=\ell}^{\ell+M}
			\vvvert z_{\ell'+1}^\star - z_{\ell'}^\star \vvvert^2
			 & \eqreff*{axiom:orthogonality}
			\lesssim
			(M+1)^{1-\delta} \,
			\vvvert z^\star - z_{\ell}^\star \vvvert^2
			\eqreff{axiom:reliability}
			\lesssim
			(M+1)^{1-\delta} \, \zeta_{\ell}(z_{\ell}^\star)^2
			\eqreff{eq:estimator_ZetaEll_dual}
			\lesssim
			(M+1)^{1-\delta} \, \Zeta_{\ell}^2.
		\end{aligned}
	\end{equation}
	Using~\eqref{eq:remainder}, the
	quasi-monotonicity~\eqref{eq:quasimon_error},
	and~\eqref{eq1:summability_remainder}, we conclude the proof of
	\eqref{eq:summability_remainder}, for all \(\ell,
	M \in \N_0\) with \(\ell+M < \elll\),
	\begin{align*}
		\sum_{\ell^\prime=\ell}^{\ell+M}
		R_{\ell^\prime}^2
		 & \eqreff*{eq:remainder}
		\lesssim
		\
		\sum_{\ell^\prime=\ell}^{\ell+M}
		\vvvert u_{\ell'+1}^\star - u_{\ell'}^\star \vvvert^2 \,
		\Zeta_{\ell^\prime}^2
		+
		\sum_{\ell^\prime=\ell}^{\ell+M}
		\vvvert z_{\ell'+1}^\star - z_{\ell'}^\star \vvvert^2 \,
		\Eta_{\ell^\prime}^2
		\\
		 & \eqreff*{eq:quasimon_error}
		\lesssim
		\
		\Zeta_{\ell}^2
		\sum_{\ell^\prime=\ell}^{\ell+M}
		\vvvert u_{\ell'+1}^\star - u_{\ell'}^\star \vvvert^2
		+
		\Eta_{\ell}^2
		\sum_{\ell^\prime=\ell}^{\ell+M}
		\vvvert z_{\ell'+1}^\star - z_{\ell'}^\star \vvvert^2
		\eqreff{eq1:summability_remainder}
		\lesssim
		(M+1)^{1-\delta} \, \Eta_{\ell}^{2} \, \Zeta_{\ell}^{2}.	\qedhere
	\end{align*}
\end{proof}

The tail-summability in \(\ell\) provides the basis for the proof of tail-summability on
the mesh level \(\ell\) together with the Zarantonello symmetrization index
\(k\) for the final iterates of the algebraic solver.
The main ingredients in the proof of tail-summability in
\((\ell,k)\) are
Lemma~\ref{lem:contraction_remainder} and the following
quasi-contraction in the symmetrization index \(k\).
\begin{lemma}[quasi-contraction of inexact Zarantonello symmetrization]
	\label{lem:quasi_contraction_Eta_ell^k}
	There holds
	\begin{align}\label{eq:quasi_contraction_Eta_ell^k}
		\Eta_\ell^{k', \jj} \, \Zeta_\ell^{k', \jj}
		 & \lesssim
		\qsymm^{k' - k} \,
		\Eta_\ell^{k, \jj} \,
		\Zeta_{\ell}^{k, \jj}
		\quad
		 &          & \text{for all \((\ell,k', \jj) \in \QQ\) with
			\(0 \le k \le k' \le \kk[\ell]\)},
		\\
		\label{eq:Eta_ell^0}
		\Eta_\ell^{0, \jj} \, \Zeta_\ell^{0, \jj}
		 & \lesssim
		\Eta_{\ell-1} \, \Zeta_{\ell-1}
		\quad
		 &          & \text{for all \((\ell, 0, 0) \in \QQ\) with \(\ell \ge 1\)}.
	\end{align}
\end{lemma}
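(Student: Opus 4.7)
My plan splits into the initial bound~\eqref{eq:Eta_ell^0} and the geometric-style estimate~\eqref{eq:quasi_contraction_Eta_ell^k}. I would dispose of~\eqref{eq:Eta_ell^0} first, since it is essentially bookkeeping. By nested iteration in Algorithm~\ref{algorithm:afem}\ref{alg:refine}, one has \(u_\ell^{0,0} = u_\ell^{0,\star} = u_{\ell-1}^{\mm,\nn}\) and analogously for the dual, so the middle term of~\eqref{eq:quasi-error-primal} vanishes; moreover, since the primal inner loop starts at \(m = 1\), the extension~\eqref{eq:quasi-error-extension} (with \(\nn[\ell, 0] = 0\)) forces \(\Eta_\ell^{0,\jj} = \Eta_\ell^{0,0}\). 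A triangle inequality on \(\vvvert u_\ell^\star - u_{\ell-1}^{\mm,\nn} \vvvert\) together with~\eqref{eq:estimator_EtaEll_primal} then controls the error contribution by a multiple of \(\Eta_{\ell-1}\), while the one-level estimator monotonicity \(\eta_\ell(u_{\ell-1}^{\mm,\nn}) \le \eta_{\ell-1}(u_{\ell-1}^{\mm,\nn})\) implicitly contained in~\eqref{eq5:combined-contraction} handles the estimator contribution. Hence \(\Eta_\ell^{0,0} \lesssim \Eta_{\ell-1}\); the dual analog and multiplication yield~\eqref{eq:Eta_ell^0}.

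For~\eqref{eq:quasi_contraction_Eta_ell^k}, the plan is to reduce to the simpler quasi-errors
\[
\widetilde{\Eta}_\ell^{k} \coloneqq \vvvert u_\ell^\star - u_\ell^{k,\nn} \vvvert + \gamma\,\eta_\ell(u_\ell^{k,\nn}), \qquad \widetilde{\Zeta}_\ell^{k} \coloneqq \vvvert z_\ell^\star - z_\ell^{k,\nnu} \vvvert + \gamma\,\zeta_\ell(z_\ell^{k,\nnu}),
\]
extended by their terminal value for \(k > \mm[\ell]\) resp.\ \(k > \mmu[\ell]\). The first task is to verify the equivalence \(\Eta_\ell^{k,\jj} \simeq \widetilde{\Eta}_\ell^{k}\) (and the dual analog): the only nontrivial ingredient is the bound on the middle term \(\vvvert u_\ell^{k,\star} - u_\ell^{k,\nn} \vvvert\), which combines~\eqref{eq:aposteriori_algebra} with the algebraic stopping criterion~\eqref{eq:n_stopping_criterion} and the triangle inequality via \(u_\ell^{k,0} = u_\ell^{k-1,\nn}\), giving a bound in terms of \(\vvvert u_\ell^\star - u_\ell^{k-1,\nn} \vvvert\) and \(\eta_\ell(u_\ell^{k,\nn})\), absorbable into \(\widetilde{\Eta}_\ell^{k-1} + \widetilde{\Eta}_\ell^{k}\).

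With this equivalence in place, contraction of \(\widetilde{\Eta}_\ell^{k}\) in \(k\) follows from the inexact Zarantonello contraction~\eqref{eq:inexact_Zarantonello_contraction} for \(1 \le k < \mm[\ell]\) and from the modified form~\eqref{eq2:inexact_Zarantonello_contraction} combined with the stopping criterion~\eqref{eq:m_stopping_criterion} at \(k = \mm[\ell]\). A stability-plus-contraction calculation mirroring Step~2 of the proof of Lemma~\ref{lem:contraction_remainder}---but now on the \(k\)-scale rather than the \(\ell\)-scale, and without any remainder since no mesh refinement intervenes---yields \(\widetilde{\Eta}_\ell^{k'} \lesssim \qsymm^{k'-k}\,\widetilde{\Eta}_\ell^{k}\) for all \(0 \le k \le k' \le \mm[\ell]\), which then extends constantly for \(k' > \mm[\ell]\). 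The same reasoning applies verbatim to the dual variable with \(\mmu[\ell]\) replacing \(\mm[\ell]\).

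The hard part will be the combinatorial step of multiplying the two one-sided contractions: once the shorter of the two inner loops terminates, its factor is frozen while the other keeps contracting. Writing \(\kappa_u \coloneqq \min\{k, \mm[\ell]\}\), \(\kappa_u' \coloneqq \min\{k', \mm[\ell]\}\), and defining \(\kappa_z, \kappa_z'\) analogously with \(\mmu[\ell]\), multiplication of the primal and dual contractions yields
\[
\Eta_\ell^{k',\jj}\,\Zeta_\ell^{k',\jj} \lesssim \qsymm^{(\kappa_u' - \kappa_u) + (\kappa_z' - \kappa_z)}\,\Eta_\ell^{k,\jj}\,\Zeta_\ell^{k,\jj},
\]
and a short case distinction exploiting the constraint \(k' \le \kk[\ell] = \max\{\mm[\ell], \mmu[\ell]\}\) shows \((\kappa_u' - \kappa_u) + (\kappa_z' - \kappa_z) \ge k' - k\): both indices saturating simultaneously on \([k, k']\) would force \(k' \le k\). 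This closes~\eqref{eq:quasi_contraction_Eta_ell^k}.
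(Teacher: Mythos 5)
Your treatment of \eqref{eq:Eta_ell^0} and your final combination step are sound and essentially coincide with the paper's argument: the factor whose symmetrization loop runs up to \(\kk[\ell]\) never saturates on \([k,k']\), so it alone contributes the full exponent \(k'-k\), while the other factor is merely quasi-monotone. The gap lies in the core single-factor claim \(\widetilde{\Eta}_\ell^{k'} \lesssim \qsymm^{k'-k}\,\widetilde{\Eta}_\ell^{k}\) for intermediate indices \(k' < \mm[\ell]\). You derive it from the inexact Zarantonello contraction \eqref{eq:inexact_Zarantonello_contraction} plus a ``stability-plus-contraction calculation mirroring Step~2 of Lemma~\ref{lem:contraction_remainder}, without remainder since no mesh refinement intervenes''. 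But \eqref{eq:inexact_Zarantonello_contraction} only contracts the error component \(\vvvert u_\ell^\star - u_\ell^{k,\nn}\vvvert\); the estimator component \(\eta_\ell(u_\ell^{k,\nn})\) has no intrinsic contraction mechanism on a \emph{fixed} mesh. In Step~2 of Lemma~\ref{lem:contraction_remainder} the factor \(q(\theta)<1\) in front of the estimator comes from D\"orfler marking and refinement via \eqref{eq1:estimator_reduction}; on the \(k\)-scale stability \eqref{axiom:stability} only yields \(\eta_\ell(u_\ell^{k',\nn}) \le \eta_\ell(u_\ell^{k'-1,\nn}) + \Cstab\vvvert u_\ell^{k',\nn}-u_\ell^{k'-1,\nn}\vvvert\) with factor \(1\), and since \(\eta_\ell(u_\ell^{k',\nn})\to\eta_\ell(u_\ell^\star)>0\) in general, no geometric decay of the estimator term can follow from stability and Zarantonello contraction alone.

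The missing ingredient is precisely what the paper uses in its derivation of \eqref{eq1a:step7}: for \(k' < \mm[\ell]\) the symmetrization loop has \emph{not} terminated, so the negation of the stopping criterion \eqref{eq:m_stopping_criterion} gives \(\lamsym\,\eta_\ell(u_\ell^{k',\nn}) < \vvvert u_\ell^{k',\nn}-u_\ell^{k'-1,\nn}\vvvert\); this absorbs the estimator into the consecutive-iterate difference, which by \eqref{eq:aposteriori_inexactZarantonello} is controlled by \(\vvvert u_\ell^\star - u_\ell^{k'-1,\nn}\vvvert\) and hence does contract via \eqref{eq:inexact_Zarantonello_contraction}. (Your proposal invokes \eqref{eq:m_stopping_criterion} only at \(k=\mm[\ell]\), where it holds rather than fails and serves a different purpose, namely bounding the algebraic middle term.) With the failed stopping criterion inserted at the intermediate steps your outline can be repaired and then follows the paper's proof quite closely; without it, the intermediate-step contraction — and therefore \eqref{eq:quasi_contraction_Eta_ell^k} — is not established.
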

\begin{proof}
	First, we note that the
	\textsl{a~posteriori} error
	control~\eqref{eq:aposteriori_algebra}
	and the
	stopping criteria of the
	algebraic solver~\eqref{eq:n_stopping_criterion} and of the
	symmetrization~\eqref{eq:m_stopping_criterion} lead, for \((\ell, \mm, \nn)
	\in \QQ^{u}\), to
	\begin{equation*}
		\vvvert u_\ell^{\mm, \star} - u_\ell^{\mm, \nn} \vvvert
		\eqreff{eq:aposteriori_algebra}
		\lesssim
		\vvvert u_\ell^{\mm, \nn} - u_\ell^{\mm, \nn-1} \vvvert
		\eqreff{eq:n_stopping_criterion}
		\lesssim
		\eta_{\ell}(u_\ell^{\mm, \nn})
		+ \vvvert u_\ell^{\mm, \nn} - u_\ell^{\mm, 0} \vvvert
		\eqreff{eq:m_stopping_criterion}
		\lesssim
		\eta_\ell(u_\ell^{\mm, \nn})
		\lesssim
		\Eta_\ell.
	\end{equation*}
	Since the two notions of quasi-errors
	\(\Eta_\ell\) and \(\Eta_{\ell}^{\kk,\jj}\) only differ
	by the middle term \(\vvvert u_\ell^{\mm, \star} - u_\ell^{\mm, \nn} \vvvert\)
	and the fixed constant factor \(0 < \gamma < 1\), this and
	the analogous estimate for the dual variable show
	\begin{equation}\label{eq:step7:x}
		\Eta_\ell
		\le
		\Eta_\ell^{\kk, \jj}
		\lesssim \Eta_\ell
		\quad
		\text{and}
		\quad
		\Zeta_{\ell}
		\le
		\Zeta_\ell^{\kk, \jj}
		\lesssim
		\Zeta_{\ell}
		\quad
		\text{for all \((\ell, \kk, \jj) \in \QQ\)}.
	\end{equation}
	For $0 \le k < k' < \mm[\ell] < \kk[\ell]$
	(i.e., the primal iteration stops earlier than the dual iteration),
	the validity of the stopping criterion~\eqref{eq:n_stopping_criterion}
	for the algebraic solver and
	the failure of criterion~\eqref{eq:m_stopping_criterion} for
	the inexact Zarantonello symmetrization
	prove that
	\begin{equation}\label{eq1a:step7}
		\begin{aligned}
			\Eta_\ell^{k', \nn}
			 & \eqreff*{eq:aposteriori_algebra}\lesssim
			\vvvert u_\ell^\star - u_\ell^{k', \nn} \vvvert +
			\vvvert u_\ell^{k', \nn} - u_\ell^{k', \nn-1} \vvvert +
			\eta_\ell(u_\ell^{k', \nn})
			\\
			 & \eqreff*{eq:n_stopping_criterion}\lesssim
			\vvvert u_\ell^\star - u_\ell^{k'-1, \nn} \vvvert +
			\vvvert u_\ell^{k', \nn} - u_\ell^{k'-1, \nn} \vvvert +
			\eta_\ell(u_\ell^{k', \nn})
			\\
			 & \eqreff*{eq:m_stopping_criterion}\lesssim
			\vvvert u_\ell^\star - u_\ell^{k',\nn} \vvvert +
			\vvvert u_\ell^{k',\nn} - u_\ell^{k'-1,\nn} \vvvert
			\\
			 & \eqreff*{eq:aposteriori_inexactZarantonello}\le
			\vvvert u_\ell^\star - u_\ell^{k'-1,\nn} \vvvert
			\eqreff*{eq:inexact_Zarantonello_contraction}
			\lesssim
			\qsymm^{k'-k} \, \vvvert u_\ell^\star -
			u_\ell^{k,\nn} \vvvert
			\lesssim
			\qsymm^{k'-k} \, \Eta_\ell^{k, \nn}.
		\end{aligned}
	\end{equation}
	Moreover, for $0 \le k < k' = \mm[\ell]$, stability~\eqref{axiom:stability}
	and the estimate~\eqref{eq4:stability} verify
	\begin{align*}\label{eq1:step7}
		\Eta_\ell^{\mm, \nn}
		\, & \eqreff*{eq:step7:x}
		\simeq \,
		\vvvert u_\ell^\star - u_\ell^{\mm, \nn} \vvvert + \eta_\ell(u_\ell^{\mm,
			\nn})
		\eqreff*{axiom:stability}
		\lesssim
		\vvvert u_\ell^\star - u_\ell^{\mm,\nn} \vvvert
		+
		\vvvert u_\ell^{\mm, \nn} - u_\ell^{\mm-1, \nn} \vvvert
		+
		\eta_\ell(u_\ell^{\mm-1, \nn})
		\\&
		\lesssim
		\Eta_\ell^{\mm-1, \nn} + \vvvert u_\ell^{\mm, \nn} - u_\ell^{\mm-1,
			\nn} \vvvert
		\eqreff{eq4:stability}\lesssim \, \Eta_\ell^{\mm-1, \nn}
		\eqreff{eq1a:step7}\lesssim \qsymm^{\mm[\ell] - 1 - k} \,
		\Eta_{\ell}^{k, \nn}
		\simeq
		\qsymm^{\mm[\ell]-k} \, \Eta_\ell^{k, \nn}.
	\end{align*}
	For \(0 \le k \le \mm[\ell] < k' \le \kk[\ell]\),
	it follows
	\(
	\Eta_\ell^{k', \nn} = \Eta_{\ell}^{\mm, \nn}
	\lesssim
	\qsymm^{\mm[\ell] - k} \, \Eta_\ell^{k, \nn}.
	\)
	Finally, for \(\mm[\ell] \le k < k' \le \kk[\ell]\), we have
	\(\Eta_{\ell}^{k', \nn} =
	\Eta_{\ell}^{\mm[\ell], \nn} = \Eta_{\ell}^{k, \nn}\).
	Notice that the same argumentation holds for
	the dual quasi-error \(\Zeta_{\ell}^{k, \nnu}\) in
	the remaining cases with \(\mmu[\ell] < \kk[\ell]\)
	(i.e., the dual iteration stops earlier than the primal iteration).

	Since \(\kk[\ell] = \mm[\ell]\) or \(\kk[\ell] = \mu[\ell]\) by
	definition, we obtain, for all \((\ell, k', \jj) \in \QQ\) with
	\(0 \le k \le k' \le \kk[\ell]\),
	\begin{equation*}
		\Eta_{\ell}^{k', \jj} \lesssim \qsymm^{k' - k} \, \Eta_{\ell}^{k, \jj}
		\quad \text{if \(\kk[\ell] = \mm[\ell]\)}
		\quad
		\text{or}
		\quad
		\Zeta_{\ell}^{k', \jj} \lesssim \qsymm^{k' - k} \, \Zeta_{\ell}^{k, \jj}
		\quad
		\text{if \(\kk[\ell] = \mmu[\ell]\).}
	\end{equation*}
	Furthermore, there holds \(\Eta_{\ell}^{k', \jj} \lesssim
	\Eta_{\ell}^{k, \jj}\) and
	\(\Zeta_{\ell}^{k', \jj} \lesssim \Zeta_{\ell}^{k, \jj}\) in any
	case. This yields~\eqref{eq:quasi_contraction_Eta_ell^k} via
	\begin{equation*}
		\Eta_\ell^{k', \jj} \, \Zeta_\ell^{k', \jj}
		\lesssim \qsymm^{k' - k} \, \Eta_\ell^{k, \jj} \,
		\Zeta_{\ell}^{k, \jj}
		\quad \text{for all } (\ell, k', \jj) \in \QQ \text{ with
		} 0 \le k \le k' \le \kk[\ell],
	\end{equation*}
	where the hidden constant depends only on $\Cstab$, $\lamsym$, and $\qsymm$.

	Nested iteration $u_{\ell-1}^{\mm, \nn} =
		u_\ell^{0,\nn}$ and $z_{\ell-1}^{\mmu, \nnu} = z_\ell^{0,\nnu}$
	and the estimates~\eqref{eq:estimator_quasiError} yield, for
	all \((\ell,0,0)
	\in \QQ\) with \(\ell > 0\),
	\begin{align*}
		\Eta_\ell^{0, \jj}
		 & \eqreff{eq:step7:x}\simeq
		\vvvert u_\ell^\star - u_{\ell-1}^{\mm, \nn} \vvvert +
		\eta_\ell(u_{\ell-1}^{\mm, \nn})
		\le \vvvert u_\ell^\star - u_{\ell-1}^\star \vvvert + \Eta_{\ell-1}^{\kk,
			\jj}
		\eqreff{eq:estimator_quasiError}
		\lesssim
		\Eta_{\ell-1} + \Eta_{\ell-1}^{\kk, \jj}
		\eqreff{eq:step7:x}\simeq
		\Eta_{\ell-1},
		\\
		\Zeta_\ell^{0, \jj}
		 & \eqreff{eq:step7:x}\simeq
		\vvvert z_\ell^\star - z_{\ell-1}^{\mmu, \nnu} \vvvert +
		\zeta_\ell(z_{\ell-1}^{\mmu, \nnu})
		\le \vvvert z_\ell^\star - z_{\ell-1}^\star \vvvert +
		\Zeta_{\ell-1}^{\kk,
			\jj}
		\eqreff{eq:estimator_quasiError}
		\lesssim
		\Zeta_{\ell-1} + \Zeta_{\ell-1}^{\kk, \jj}
		\eqreff{eq:step7:x}\simeq
		\Zeta_{\ell-1}.
	\end{align*}
	A multiplication of the two previous estimates
	proves~\eqref{eq:Eta_ell^0}.
\end{proof}

Finally, the quasi-contraction in \((\ell, k)\) from
Lemma~\ref{lem:quasi_contraction_Eta_ell^k} together with a
quasi-contraction in
the algebraic solver index \(j\) leads to
tail-summability in \((\ell, k, j)\).
\begin{lemma}[quasi-contraction and stability by algebraic solver]
	\label{lem:quasi_contraction_Eta_ell^kj}
	There holds
	\begin{equation}\label{eq:quasi_contraction_Eta_ell^kj}
		\Eta_\ell^{k,j'} \, \Zeta_\ell^{k,j'}
		\lesssim
		\qalg^{j' - j} \,
		\Eta_\ell^{k,j} \, \Zeta_\ell^{k,j}
		\quad
		\text{for all \((\ell,k,j') \in \QQ\) with
			\(0 \le j \le j' \le \jj[\ell,k]\)}
	\end{equation}
	and, with the abbreviation \((m-1)_{+} \coloneqq \max \{m-1, 0\}\),
	\begin{equation}\label{eq1:step9}
		\Eta_\ell^{m,0} \le 3 \, \Eta_\ell^{(m-1)_{+},
			\nn}
		\,
		\text{ and }
		\,
		\Zeta_\ell^{\mu,0} \le 3 \, \Zeta_\ell^{(\mu-1)_{+},
			\nnu}
		\,
		\text{ for all \((\ell, m, 0) \in \QQ^{u}, (\ell, \mu, 0)
			\in
			\QQ^z\)}.
	\end{equation}
\end{lemma}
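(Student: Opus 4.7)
The plan is to dispatch the two claims in sequence. The stability bound \eqref{eq1:step9} is elementary and follows from nested iteration combined with the Zarantonello contraction \eqref{eq:Zarantonello-contraction}. The quasi-contraction \eqref{eq:quasi_contraction_Eta_ell^kj} is proved in the spirit of the preceding Lemma~\ref{lem:quasi_contraction_Eta_ell^k}, but now with the roles of the algebraic solver loop and the Zarantonello symmetrization loop interchanged.

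For \eqref{eq1:step9}, I would first note that for $m = 0$ the nested-iteration rule in Algorithm~\ref{algorithm:afem}\ref{alg:refine} provides $u_\ell^{0,0} = u_\ell^{0,\nn} = u_\ell^{0,\star}$, so $\Eta_\ell^{0,0} = \Eta_\ell^{0,\nn}$ and the bound holds trivially under the convention $(m-1)_+ = 0$. For $m \ge 1$, step~\ref{alg:primal_a} yields the identity $u_\ell^{m,0} = u_\ell^{m-1,\nn}$, so the first and third constituents of $\Eta_\ell^{m,0}$ literally equal those of $\Eta_\ell^{m-1,\nn}$. The middle (algebra) contribution is controlled by applying the Zarantonello contraction \eqref{eq:Zarantonello-contraction} at the fixed-point identity $u_\ell^\star = \Phi_\ell^u(\delta; u_\ell^\star)$, which gives $\vvvert u_\ell^{m,\star} - u_\ell^\star\vvvert \le \qsym\,\vvvert u_\ell^{m-1,\nn} - u_\ell^\star\vvvert$, followed by a triangle inequality to conclude $\vvvert u_\ell^{m,\star} - u_\ell^{m,0}\vvvert \le (1+\qsym)\,\vvvert u_\ell^\star - u_\ell^{m-1,\nn}\vvvert \le 2\,\vvvert u_\ell^\star - u_\ell^{m-1,\nn}\vvvert$. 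Summation yields $\Eta_\ell^{m,0} \le 3\,\Eta_\ell^{m-1,\nn}$, and the dual claim follows verbatim.

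For \eqref{eq:quasi_contraction_Eta_ell^kj} I would argue factor by factor along the chain of estimates used in Lemma~\ref{lem:quasi_contraction_Eta_ell^k}. For the primal factor and $0 \le j < j' < \nn[\ell,k]$, the a~posteriori control \eqref{eq:aposteriori_algebra} first rewrites the algebra error as a multiple of the iterate difference $\vvvert u_\ell^{k,j'} - u_\ell^{k,j'-1}\vvvert$; the failure of the algebraic stopping criterion \eqref{eq:n_stopping_criterion} at the non-stopping index $j'$ simultaneously bounds $\eta_\ell(u_\ell^{k,j'})$ and $\vvvert u_\ell^{k,j'} - u_\ell^{k,0}\vvvert$ by that same iterate difference; a triangle inequality through $u_\ell^{k,0}$ then brings the global-error term $\vvvert u_\ell^\star - u_\ell^{k,j'}\vvvert$ to the same footing. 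The algebraic contraction \eqref{eq:algebra_contraction} finally delivers $\vvvert u_\ell^{k,j'} - u_\ell^{k,j'-1}\vvvert \lesssim \qalg^{j'-1-j}\,\vvvert u_\ell^{k,\star} - u_\ell^{k,j}\vvvert \le \qalg^{j'-1-j}\,\Eta_\ell^{k,j}$, so that the primal factor contracts like $\qalg^{j'-j}$. At $j' = \nn[\ell,k]$ the stability axiom \eqref{axiom:stability} contributes only an extra multiplicative constant, while for $j' > \nn[\ell,k]$ the extension \eqref{eq:quasi-error-extension} freezes $\Eta_\ell^{k,j'} = \Eta_\ell^{k,\nn}$. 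The same argument applies to the dual factor with $\nn$ and \eqref{eq:n_stopping_criterion} replaced by $\nnu$ and \eqref{eq:nu_stopping_criterion}. Because $\jj[\ell,k] = \max\{\nn,\nnu\}$, for every $j' \le \jj[\ell,k]$ at least one of the two algebraic iterations is still active and hence its factor delivers the geometric decay $\qalg^{j'-j}$, while the companion factor is bounded by mere quasi-monotonicity; multiplying the two factor estimates yields the product bound \eqref{eq:quasi_contraction_Eta_ell^kj}.

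The main obstacle I expect is the global-error term $\vvvert u_\ell^\star - u_\ell^{k,j'}\vvvert$ inside the quasi-error: since $u_\ell^\star$ is not the fixed point of the algebraic solver, this quantity does not contract in $j'$ by itself. The failure of the stopping criterion is precisely the device that ties $\vvvert u_\ell^{k,j'} - u_\ell^{k,0}\vvvert$ to the iterate difference, and thus, through the triangle inequality, returns the global-error contribution to the contractive scale of the algebra error. A secondary bookkeeping issue is the possibility $\nn \ne \nnu$, which is handled by the case distinction above exploiting the product structure to borrow the geometric factor from whichever of the two iterations happens to still be running.
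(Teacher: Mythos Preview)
Your argument for \eqref{eq1:step9} is correct and matches the paper's proof essentially verbatim.

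For \eqref{eq:quasi_contraction_Eta_ell^kj}, however, the step you flag as ``the main obstacle'' is not actually resolved by your proposed device. A triangle inequality through $u_\ell^{k,0}$ gives
\[
\vvvert u_\ell^\star - u_\ell^{k,j'}\vvvert \le \vvvert u_\ell^\star - u_\ell^{k,0}\vvvert + \vvvert u_\ell^{k,j'} - u_\ell^{k,0}\vvvert,
\]
and while the second summand is indeed controlled by the failed stopping criterion, the first summand $\vvvert u_\ell^\star - u_\ell^{k,0}\vvvert$ is a \emph{fixed} quantity throughout the $j$-loop. It cannot carry a factor $\qalg^{j'-j}$, so the chain does not close and you only recover quasi-monotonicity $\Eta_\ell^{k,j'}\lesssim\Eta_\ell^{k,0}$, not contraction.

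The missing ingredient is the Zarantonello a~posteriori estimate \eqref{eq:aposteriori_Zarantonello}. The paper splits instead through $u_\ell^{k,\star}$ and invokes
\(
\vvvert u_\ell^\star - u_\ell^{k,\star}\vvvert \le \tfrac{\qsym}{1-\qsym}\,\vvvert u_\ell^{k,\star} - u_\ell^{k,0}\vvvert,
\)
after which a further triangle inequality through $u_\ell^{k,j'}$ yields
\(
\vvvert u_\ell^{k,\star} - u_\ell^{k,0}\vvvert \le \vvvert u_\ell^{k,\star} - u_\ell^{k,j'}\vvvert + \vvvert u_\ell^{k,j'} - u_\ell^{k,0}\vvvert.
\)
Both of these \emph{are} bounded by the iterate difference (via \eqref{eq:aposteriori_algebra} and the failed criterion \eqref{eq:n_stopping_criterion} respectively), so the entire quasi-error collapses to $\vvvert u_\ell^{k,j'}-u_\ell^{k,j'-1}\vvvert$, and the algebraic contraction then delivers the factor $\qalg^{j'-j}$. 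In short: replace ``triangle inequality through $u_\ell^{k,0}$'' by ``triangle inequality through $u_\ell^{k,\star}$ followed by the Zarantonello a~posteriori bound \eqref{eq:aposteriori_Zarantonello}'', and the rest of your outline (the treatment of $j'=\nn$, the frozen extension beyond $\nn$, and the product/case-distinction argument for $\jj=\max\{\nn,\nnu\}$) goes through as in the paper.
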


\begin{proof}
	We recall that $u_\ell^{0,0} = u_\ell^{0, \nn} = u_\ell^{0,\star}$
	by definition
	and, hence, $\Eta_\ell^{0,0} = \Eta_\ell^{0, \nn} = \Eta_\ell^{0,
			\jj}$. Nested
	iteration
	$u_\ell^{m,0} = u_\ell^{m-1,\nn}$ implies that
	\begin{align*}
		\vvvert u_\ell^{m,\star} - u_\ell^{m, 0} \vvvert
		\eqreff{eq:aposteriori_Zarantonello}\le
		(\qsym + 1) \, \vvvert u_\ell^\star - u_\ell^{m-1,\nn} \vvvert
		\le 2 \, \Eta_\ell^{m-1, \jj}
		\quad
		\text{for all \((\ell, m, 0) \in \QQ^u\)}.
	\end{align*}
	Therewith, we derive~\eqref{eq1:step9}.

	The combination of \textsl{a~posteriori} error
	control~\eqref{eq:aposteriori_Zarantonello} for the exact
	Zarantonello iteration, for
	the algebraic solver~\eqref{eq:aposteriori_algebra}, and the
	failure of the stopping criterion~\eqref{eq:n_stopping_criterion} in
	Algorithm~\ref{algorithm:afem}\href{alg:primal_ii}{(\textrm{I.b.ii})} for the algebraic
	solver proves, for $0 \le j < j' < \nn[\ell, m] < \jj[\ell, m]$,
	\begin{equation}\label{eq2c:step9}
		\begin{aligned}
			\Eta_\ell^{m, j'}
			 & \le \
			\vvvert u_\ell^\star - u_\ell^{m, \star} \vvvert
			+
			2 \, \vvvert u_\ell^{m, \star} - u_\ell^{m, j'} \vvvert
			+
			\eta_\ell(u_\ell^{m, j'})
			\\&
			\eqreff*{eq:aposteriori_Zarantonello}
			\le
			\frac{\qsym}{1-\qsym} \, \vvvert u_\ell^{m, j'} - u_\ell^{m - 1,
				\jj} \vvvert
			+
			\Big(2 + \frac{\qsym}{1-\qsym} \Big) \, \vvvert u_\ell^{m, \star} -
			u_\ell^{m, j'} \vvvert
			+
			\eta_\ell(u_\ell^{m,j'})
			\\&
			\eqreff*{eq:aposteriori_algebra}\lesssim
			\vvvert u_\ell^{m, j'} - u_\ell^{m-1, \jj} \vvvert + \vvvert u_\ell^{m, j'}
			-
			u_\ell^{m, j'-1} \vvvert
			+
			\eta_\ell(u_\ell^{m, j'})
			\eqreff{eq:n_stopping_criterion}
			\lesssim \
			\vvvert u_\ell^{m, j'} - u_\ell^{{m, j'-1}} \vvvert
			\\&
			\eqreff*{eq:aposteriori_algebra}
			\lesssim
			\vvvert u_\ell^{m, \star} - u_\ell^{m, j'-1} \vvvert
			\eqreff{eq:algebra_contraction}
			\le
			\qalg^{(j'-1)-j} \, \vvvert u_\ell^{m, \star} - u_\ell^{m, j} \vvvert
			\lesssim
			\qalg^{j'-j} \, \Eta_\ell^{m, j}.
		\end{aligned}
	\end{equation}
	For $0 \le j < \nn[\ell,m] \le j' \le \jj[\ell, m]$,
	stability~\eqref{axiom:stability} and contraction of the algebraic
	solver~\eqref{eq:algebra_contraction} verify that
	\begin{align*}\label{eq2d:step9}
		\begin{split}
			\Eta_\ell^{m, j'} = \Eta_\ell^{m, \nn}
			 & \eqreff*{eq:algebra_contraction}\le
			\vvvert u_\ell^\star - u_\ell^{m, \nn-1} \vvvert + \vvvert u_\ell^{m,
				\nn} - u_\ell^{m, \nn-1} \vvvert + \qalg \, \vvvert u_\ell^{m, \star} -
			u_\ell^{m, \nn-1} \vvvert
			+ \eta_\ell(u_\ell^{m, \nn})
			\\
			 & \eqreff*{axiom:stability}
			\le
			\,
			\Eta_\ell^{m, \nn-1} + (2 + \Cstab) \, \vvvert u_\ell^{m, \nn} -
			u_\ell^{m, \nn-1} \vvvert
			\\&
			\eqreff*{eq:aposteriori_algebra}
			\lesssim \
			\Eta_\ell^{m, \nn-1} + \vvvert u_\ell^{m, \star} - u_\ell^{m,
				\nn-1} \vvvert
			\lesssim \
			\Eta_\ell^{m, \nn-1}
			\eqreff{eq2c:step9}
			\lesssim
			\qalg^{\nn[\ell]-j} \, \Eta_\ell^{m, j}.
		\end{split}
	\end{align*}
	For \(\nn[\ell, m] \le j < j' \le \jj[\ell, m]\), it holds that
	\(\Eta_\ell^{m, j} = \Eta_\ell^{m, \nn}
	= \Eta_\ell^{m, j'}\).
	Since \(\jj[\ell, k] = \nn[\ell, k]\) or \(\jj[\ell,
		k] = \nnu[\ell, k]\),
	we have, for all \((\ell, k, j') \in \QQ\) with \(0 \le j \le j' \le \jj[\ell,
		k]\),
	\begin{align*}
		\Eta_{\ell}^{k, j} \lesssim \qalg^{j - j'} \, \Eta_{\ell}^{k, j'}
		\quad \text{if \(\jj[\ell, k] = \nn[\ell, k]\)}
		\quad
		\text{or}
		\quad
		\Zeta_{\ell}^{k, j} \lesssim \qalg^{j - j'} \, \Zeta_{\ell}^{k, j'}
		\quad \text{if \(\jj[\ell, k] = \nnu[\ell, k]\)}.
	\end{align*}
	Furthermore, we have \(\Eta_{\ell}^{k, j} \lesssim
	\Eta_{\ell}^{k, j'}\) and
	\(\Zeta_{\ell}^{k, j} \lesssim \Zeta_{\ell}^{k, j'}\) in any
	case.
	Hence, we obtain
	\begin{equation*}\label{eq3:step10}
		\Eta_\ell^{k,j} \, \Zeta_\ell^{k,j}
		\lesssim
		\qalg^{j - j'} \,
		\Eta_\ell^{k,j'} \, \Zeta_\ell^{k,j'}
		\quad
		\text{
			for all \((\ell,k,j) \in \QQ\) with \(0 \le j' \le j
			\le
			\jj[\ell,k]\)},
	\end{equation*}
	where the hidden constant depends only on $\qsym$, $\lamsym$, $\qalg$,
	$\lamalg$, and $\Cstab$.
\end{proof}

Ultimately, synthesizing the preceding lemmas
yields tail-summability of the quasi-error product
and thus leads to the following proof
of
Theorem~\ref{lem:full_linear_convergence}.

\begin{proof}[\textbf{Proof of Theorem~\ref{lem:full_linear_convergence}}]
	The proof consists of four steps.

		{\bf Step~1 (tail-summability in mesh level \(\boldsymbol{\ell}\)).}
	We apply the tail-summability criterion from~\cite[Lemma~5]{fps2023} to
	the
	sequences \(a_\ell \coloneqq \Eta_{\ell} \, \Zeta_{\ell}\) and \(b_\ell
	\coloneqq q_{\mathrm{ctr}}^\prime \, R_\ell\). Therein, it is shown that
	\(R\)-linear convergence is equivalent to
	tail-summability and that, for tail-summability,
	it is sufficient to guarantee
	\begin{equation}\label{eq:summability_criterion}
		a_{\ell+1} \le q a_\ell + b_\ell,
		\quad
		b_{\ell+M} \le C_1 \, a_\ell,
		\,\, \text{and} \,\,
		\sum_{\ell' = \ell}^{\ell+M} b_\ell^2 \le C_2 \,
		(M+1)^{1-\delta} \, a_\ell^2
		\,\,\, \text{for all } \ell, M \in \N_0.
	\end{equation}
	Indeed,
	contraction up to a remainder
	from~\eqref{eq:contraction_remainder}, the estimate of the
	remainder from \eqref{eq:summability_remainder}, and
	the quasi-monotonicity of
	\(\Eta_{\ell}\) and \(\Zeta_{\ell}\) from
	\eqref{eq:quasimon_error}
	validate the assumptions of
	the tail-summability criterion~\eqref{eq:summability_criterion} and
	lead to tail-summability
	\begin{equation}\label{eq:summability_ell}
		\sum_{\ell' = \ell+1}^{\elll-1} \Eta_{\ell'}  \, \Zeta_{\ell'}
		\lesssim
		\Eta_{\ell} \, \Zeta_{\ell}
		\quad \text{for all }
		(\ell, \kk,\jj) \in \QQ.
	\end{equation}

	\textbf{Step~2 (tail-summability in \(\boldsymbol{(\ell, k)}\)).}
	For $(\ell,k,\jj) \in \QQ$, the estimates
	\eqref{eq:quasi_contraction_Eta_ell^k}--\eqref{eq:Eta_ell^0} and the
	geometric series prove
	tail-summability
	\begin{equation}\label{eq:summability_k}
		\begin{aligned}
			\sum_{
				\substack{
					(\ell',k',\jj) \in \QQ
			\\ |\ell',k',\jj|
					>
					|\ell,k,\jj|
				}
			}
			\Eta_\ell^{k', \jj} \,  \Zeta_\ell^{k', \jj}
			 & =
			\sum_{k' = k+1}^{\kk[\ell]} \Eta_{\ell}^{k', \jj} \,
			\Zeta_{\ell}^{k',
				\jj}
			+
			\sum_{\ell' = \ell+1}^{\elll} \sum_{k'=0}^{\kk[\ell']}
			\Eta_{\ell'}^{k', \jj}\,
			\Zeta_{\ell'}^{k', \jj}
			\\
			 & \eqreff{eq:quasi_contraction_Eta_ell^k}
			\lesssim
			\Eta_\ell^{k, \jj} \, \Zeta_\ell^{k, \jj}
			+ \sum_{\ell' = \ell+1}^{\elll} \Eta_{\ell'}^{0, \jj} \,
			\Zeta_{\ell'}^{0, \jj}
			\eqreff{eq:Eta_ell^0}
			\lesssim
			\Eta_{\ell}^{k, \jj} \, \Zeta_{\ell}^{k, \jj}
			+ \sum_{\ell' = \ell}^{\elll-1} \Eta_{\ell'} \,  \Zeta_{\ell'}
			\\
			 & \eqreff*{eq:summability_ell}
			\lesssim
			\Eta_{\ell}^{k, \jj} \, \Zeta_{\ell}^{k, \jj}
			+ \Eta_{\ell} \, \Zeta_{\ell}
			\eqreff{eq:step7:x}
			\lesssim
			\Eta_{\ell}^{k, \jj} \, \Zeta_{\ell}^{k, \jj}.
		\end{aligned}
	\end{equation}

	\textbf{Step~3 (tail-summability in \(\boldsymbol{(\ell, k, j)}\)).}
	Finally, for all \( (\ell, k, j) \in \QQ \), we observe that
	\begin{align*}
		\sum_{\substack{(\ell', k', j') \in \QQ  \\ |\ell', k', j'| >
				|\ell,k,j|}}  \Eta_{\ell'}^{k',j'} \,
		 & \Zeta_{\ell'}^{k',j'}
		\!= \! \!
		\sum_{j' = j + 1}^{\jj[\ell, k]} \Eta_{\ell}^{k, j'} \,
		\Zeta_{\ell}^{k, j'}
		+
		\! \! \! \sum_{k' = k + 1}^{\kk[\ell]} \sum_{j'=0}^{\jj[\ell,k']}
		\Eta_{\ell}^{k',j'} \, \Zeta_{\ell}^{k',j'}
		+
		\! \! \!  \sum_{\ell' = \ell+1}^{\elll} \sum_{k'=0}^{\kk[\ell']}
		\sum_{j'=0}^{\jj[\ell',k']}
		\Eta_{\ell'}^{k',j'} \, \Zeta_{\ell'}^{k',j'}
		\\&
		\eqreff*{eq:quasi_contraction_Eta_ell^kj}
		\lesssim
		\Eta_{\ell}^{k, j} \, \Zeta_{\ell}^{k, j}
		+ \sum_{k'=k+1}^{\kk[\ell]} \Eta_{\ell}^{k', 0} \, \Zeta_{\ell}^{k', 0}
		+ \sum_{\ell' = \ell+1}^{\elll} \sum_{k'=0}^{\kk[\ell']}
		\Eta_{\ell'}^{k', 0} \, \Zeta_{\ell'}^{k', 0}
		\\&
		\eqreff*{eq1:step9}
		\lesssim
		\Eta_{\ell}^{k, j} \, \Zeta_{\ell}^{k, j}
		+ \sum_{\substack{(\ell',k',\jj) \in \QQ \\ |\ell',k',\jj| >
				|\ell,k,\jj|}} \Eta_{\ell'}^{k', \jj} \,
		\Zeta_{\ell'}^{k', \jj}
		\eqreff{eq:summability_k}
		\lesssim
		\Eta_{\ell}^{k, j} \, \Zeta_{\ell}^{k, j}
		+
		\Eta_{\ell}^{k, \jj} \, \Zeta_{\ell}^{k, \jj}
		\eqreff{eq:quasi_contraction_Eta_ell^kj}
		\lesssim
		\Eta_{\ell}^{k, j} \, \Zeta_{\ell}^{k, j}.
	\end{align*}

	\textbf{Step~4.}
	Since the index set \(\QQ\) is linearly
	ordered with respect to
	the total step counter
	\(|\cdot, \cdot, \cdot |\), tail-summability
	in Step~3 and
	the equivalence of tail-summability and \(R\)-linear convergence
	from~\cite[Lemma~10]{fps2023} conclude
	the proof of
	\eqref{eq:full_linear_convergence} in
	Theorem~\ref{lem:full_linear_convergence}.
\end{proof}

\section{Optimal complexity of Algorithm~\ref{algorithm:afem}}\label{section:optimal_complexity}
Full linear convergence~\eqref{eq:full_linear_convergence} has a
simple but crucial consequence. Using a geometric series
argument, one can
prove that the
cumulative computational
cost up to a given level is bounded by the cost of the said
level;
see~\cite[Corollary~14]{fps2023}, where only
the primal quasi-error \(\Eta_{\ell}^{k,j}\) has to be replaced by the
quasi-error product~\(\Eta_{\ell}^{k,j} \, \Zeta_{\ell}^{k,j}\).
As a consequence, the convergence
rates with respect
to the number of degrees of freedom (defined as \(M(r)\) in
\eqref{eq:rate_complexity} below) and the rates
with respect to the overall
computational cost (cf.\ \eqref{eq:cost} and the discussion following
the
statement of Algorithm~\ref{algorithm:afem}) coincide.
\begin{corollary}[rates = complexity
			{\cite[Corollary~14]{fps2023}}]
	\label{cor:rate_complexity}
	Suppose the assumptions of
	Theorem~\ref{lem:full_linear_convergence}.
	For all \(r > 0\), the output
	\((\TT_{\ell})_{\ell \in \N_0}\) of
	Algorithm~\ref{algorithm:afem} satisfies
	\begin{equation}\label{eq:rate_complexity}
		\medmuskip = -2mu
		M(r)
		\coloneqq
		\!\!
		\sup \limits_{
			\substack{
				(\ell, k, j) \in \QQ
			}
		}
		\bigl(\# \TT_{\ell}\bigr)^r \Eta_\ell^{k,j} \, \Zeta_\ell^{k,j}
		\le \!\!
		\sup \limits_{
			\substack{(\ell, k, j) \in \QQ
			}
		}
		\Bigl( \!\!\!
		\sum \limits_{
			\substack{
				(\ell^\prime, k^\prime, j^\prime) \in \QQ
				\\
				| \ell^\prime, k^\prime, j^\prime | \le | \ell, k, j |
			}
		}
		\# \TT_{\ell^\prime}
		\Bigr)^r
		\Eta_\ell^{k,j} \Zeta_\ell^{k,j}
		\le \!
		C_{\rm cost}(r)  M(r),
	\end{equation}
	with the constant $C_{\rm cost}(r)
		\coloneqq \Clin /
		(1-\qlin^{1/r})^r > 0$. \qed
\end{corollary}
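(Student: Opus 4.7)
The lower bound in~\eqref{eq:rate_complexity} is immediate, since the summand with \((\ell', k', j') = (\ell, k, j)\) already contributes \(\#\TT_\ell\) to the cumulative cost, so that \((\#\TT_\ell)^r \le \bigl(\sum_{|\ell',k',j'| \le |\ell,k,j|} \#\TT_{\ell'}\bigr)^r\); multiplying by \(\Eta_\ell^{k,j}\Zeta_\ell^{k,j}\) and taking the supremum over \((\ell, k, j) \in \QQ\) proves the first inequality. Hence only the upper bound requires work, and we may assume \(M(r) < \infty\) because otherwise the statement is vacuous.

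The plan is to trade the exponential decay provided by full linear convergence against the growth of the cumulative cost sum. First, I would take the \(1/r\)-th power of~\eqref{eq:full_linear_convergence} to obtain, for all \((\ell', k', j') \in \QQ\) with \(|\ell', k', j'| \le |\ell, k, j|\),
\[
\bigl(\Eta_\ell^{k,j} \Zeta_\ell^{k,j}\bigr)^{1/r}
\le
\Clin^{1/r} \, \qlin^{(|\ell,k,j| - |\ell',k',j'|)/r} \,
\bigl(\Eta_{\ell'}^{k',j'} \Zeta_{\ell'}^{k',j'}\bigr)^{1/r}.
\]
Multiplying by \(\#\TT_{\ell'}\) and absorbing \(\#\TT_{\ell'} \bigl(\Eta_{\ell'}^{k',j'} \Zeta_{\ell'}^{k',j'}\bigr)^{1/r} \le M(r)^{1/r}\) by the very definition of \(M(r)\), the \(\ell'\)-indexed quasi-error disappears and only the geometric weight survives. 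Summing over admissible \((\ell', k', j')\) and exploiting that the total step counter \(|\cdot,\cdot,\cdot|\) is an injective enumeration of \(\QQ\) by an initial segment of \(\N_0\), the geometric series yields
\[
\sum_{\substack{(\ell', k', j') \in \QQ \\ |\ell', k', j'| \le |\ell, k, j|}}
\qlin^{(|\ell,k,j|-|\ell',k',j'|)/r}
\le
\sum_{n=0}^{\infty} \qlin^{n/r}
=
\frac{1}{1-\qlin^{1/r}}.
\]
Raising to the \(r\)-th power and passing to the supremum over \((\ell, k, j) \in \QQ\) produces the constant \(C_{\rm cost}(r) = \Clin/(1-\qlin^{1/r})^r\).

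The argument is essentially a verbatim transcription of~\cite[Corollary~14]{fps2023}, since its only structural input is the abstract form of full linear convergence along the linearly ordered index set, and \eqref{eq:full_linear_convergence} now supplies this with the scalar quasi-error replaced throughout by the product \(\Eta_{\ell}^{k,j}\Zeta_{\ell}^{k,j}\). The sole point requiring mild attention is the verification that the total step counter is indeed a bijection of \(\QQ\) onto an initial segment of \(\N_0\), which justifies collapsing the double sum over \((\ell', k', j')\) into a univariate geometric series; this follows directly from the triple nested-sum definition of \(|\cdot, \cdot, \cdot|\), so no genuine obstacle remains.
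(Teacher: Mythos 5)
Your argument is correct and is precisely the one the paper relies on: the corollary is quoted from \cite[Corollary~14]{fps2023} with the scalar quasi-error replaced by the product \(\Eta_\ell^{k,j}\,\Zeta_\ell^{k,j}\), and that proof is exactly your trade-off of the geometric decay from full linear convergence~\eqref{eq:full_linear_convergence} (raised to the power \(1/r\)) against the cumulative cost sum, using the lexicographic ordering of \(\QQ\) by \(|\cdot,\cdot,\cdot|\) to collapse the sum into a geometric series and recover \(C_{\rm cost}(r)=\Clin/(1-\qlin^{1/r})^r\). No gaps to report.
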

While Theorem~\ref{lem:full_linear_convergence} only concerns
\(R\)-linear convergence, a sufficiently small choice of the adaptivity
parameters \(\theta, \lamsym\), and \(\lamalg\) even guarantees the optimal
convergence rate \(r = s+t\) with respect to
computational cost, i.e., the overall computational time. Here,
we suppose that the primal
solution \(u^\star\)
to \eqref{eq:weak_formulation} can be approximated at rate \(s\) and the dual
solution \(z^\star\) to~\eqref{eq:dual_problem} can be approximated at rate
\(t\).
To formalize this idea, we introduce the notion of approximation
classes~\cite{bdd2004, stevenson2007, ckns2008, axioms}.  For
\(s, t > 0\), define
\begin{equation*}
	\|u^\star\|_{\A_s}
	\coloneqq
	\sup_{N \in \N_0}
	\Bigl(
	\bigl( N+1 \bigr)^s \min_{\TT_{\rm opt} \in \T_N } \eta_{\rm opt} (u^\star_{\rm opt})
	\Bigr),
	\quad
	\|z^\star\|_{\A_t}
	\coloneqq
	\sup_{N \in \N_0}
	\Bigl(
	\bigl( N+1 \bigr)^t \min_{\TT_{\rm opt} \in \T_N} \zeta_{\rm opt} (z^\star_{\rm opt})
	\Bigr),
\end{equation*}
where $\eta_{\rm opt}(\cdot)$ and $\zeta_{\rm opt}(\cdot)$ denote the estimator
values for the exact discrete solutions
\(u_{\mathrm{opt}}^\star\) and \(z_{\mathrm{opt}}^\star\) on
the unavailable optimal triangulations $\TT_{\rm opt} \in \T_N
	(\TT)$. We stress that \(\|u^\star\|_{\A_s}\) and
\(\|z^\star\|_{\A_t}\) can equivalently be defined by energy error plus
data oscillations~\cite{ffp2014, axioms}.
\begin{theorem}[optimal complexity]\label{th:optimal_complexity}
	Suppose that the estimators \(\eta\) and \(\zeta\) satisfy
	\eqref{axiom:stability}--\eqref{axiom:discrete_reliability}
	and \eqref{axiom:qm} and suppose
	quasi-orthogonality~\eqref{axiom:orthogonality}.
	Recall \(\lamalg^\star\) from
	Lemma~\ref{lem:inexact_Zarantonello_contraction} and \(\lambda^\star\) from
	\eqref{eq:choice_lambda} in
	Theorem~\ref{lem:full_linear_convergence}.
	Define the constants
	\begin{equation}\label{eq:def:Calg}
		\begin{aligned}
			\lamsym^\star
			             & \coloneqq
			\min \{1, \Cstab^{-1} \, C_{\textup{alg}}^{-1}\} \le 1
			\quad
			\text{with}
			\quad
			C_{\mathrm{alg}}
			\coloneqq
			\frac{1}{1-\qsym} \,
			\Bigl(
			\frac{2 \, \qalg}{1-\qalg} \, \lamalg^\star + \qsym
			\Bigr),
			\\
			\theta^\star & \coloneqq (1 + \Cstab^2 \, \Crel^2)^{-1} < 1.
		\end{aligned}
	\end{equation}
	Suppose that \(\theta\), \(\lamsym\), and \(\lamalg\) are
	sufficiently small in the sense of
	\begin{equation}
		\begin{aligned}
			0 & < \lamalg \le \lamalg^\star,
			\quad
			0 < \lamsym < \lamsym^\star,
			\quad
			\text{and }
			\quad
			\lamalg \, \lamsym < \lambda^\star,
			\\
			0
			  & <
			\thetamark
			\coloneqq
			\frac{(\theta^{1/2}+ \, \lamsym / \lamsym^\star)^{2}
			}{(1-\lamsym / \lamsym^\star)^{2}}
			<
			\theta^\star
			<
			1.
		\end{aligned}
	\end{equation}
	Then, Algorithm~\ref{algorithm:afem} guarantees, for all \(s, t > 0\), that
	\begin{equation}\label{eq:optimal_complexity}
		\sup_{
			\substack{
				(\ell,k, j) \in \QQ
			}
		}
		\Bigl(
		\sum_{\substack{
				(\ell^\prime, k^\prime, j^\prime) \in \QQ
				\\
				|\ell^\prime,k^\prime, j^\prime| \le |\ell,k, j|
			}
		}
		\#\TT_{\ell^\prime}
		\Bigr)^{s+t} \,
		\Eta_\ell^{k,j} \, \Zeta_\ell^{k, j}
		\le
		\Copt \,
		\max\{
		\|u^\star\|_{\A_s} \, \|z^\star\|_{\A_t}, \,
		\Eta_{0}^{0,0} \, \Zeta_{0}^{0,0}
		\}.
	\end{equation}
	The constant \(\Copt\) depends only on \(\Cstab\),
	\(\Crel\),
	\(\Cdrel\), \(\Cmark\), \(\Cmesh\), $\Clin$, $\qlin$, \(\# \TT_{0}\),
	and \(s+t\).
	In particular, there holds optimal complexity of
	Algorithm~\ref{algorithm:afem}.
\end{theorem}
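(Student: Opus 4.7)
The plan is to reduce the complexity statement~\eqref{eq:optimal_complexity} to an optimal rate with respect to degrees of freedom by Corollary~\ref{cor:rate_complexity} with \(r = s+t\), and then to establish that rate through the standard four-step pipeline: a comparison lemma tailored to the inexact iterates, the quasi-optimality of Dörfler marking~\eqref{eq:marking_criterion}, the NVB mesh-closure estimate, and a geometric-series argument driven by the full linear convergence from Theorem~\ref{lem:full_linear_convergence}. The case \(\ell = 0\) is covered by the term \(\Eta_0^{0,0} \, \Zeta_0^{0,0}\) on the right-hand side of~\eqref{eq:optimal_complexity}, so I may assume \(\ell \ge 1\) below.

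The core step, and the main obstacle, is a comparison lemma: for each \(\ell\) with \((\ell+1, 0, 0) \in \QQ\), I construct sets \(\MM_\ell^{u, \star}, \MM_\ell^{z, \star} \subseteq \TT_\ell\) with
\begin{equation*}
\#\MM_\ell^{u, \star} \lesssim \|u^\star\|_{\A_s}^{1/s} \, \bigl[\Eta_\ell^{\kk,\jj}\bigr]^{-1/s},
\qquad
\thetamark \, \eta_\ell(u_\ell^{\mm, \nn})^2 \le \eta_\ell(\MM_\ell^{u, \star}, u_\ell^{\mm, \nn})^2,
\end{equation*}
and analogous bounds for the dual problem with \(t\) and \(\|z^\star\|_{\A_t}\). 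The set \(\MM_\ell^{u, \star}\) is chosen as the non-shared part of \(\TT_\ell\) relative to a quasi-optimal mesh \(\TT_\ell^{\mathrm{opt}} \in \T_N\) for \(u^\star\) of cardinality \(N \lesssim \|u^\star\|_{\A_s}^{1/s} [\Eta_\ell^{\kk,\jj}]^{-1/s}\); discrete reliability~\eqref{axiom:discrete_reliability} then yields Dörfler marking at the threshold \(\theta^\star = (1 + \Cstab^2 \Crel^2)^{-1}\) for the exact-solution estimator \(\eta_\ell(u_\ell^\star)\). The property is transferred to the computed iterate \(u_\ell^{\mm, \nn}\) via stability~\eqref{axiom:stability} combined with the a~posteriori perturbation bound
\begin{equation*}
\vvvert u_\ell^\star - u_\ell^{\mm, \nn} \vvvert \le C_{\mathrm{alg}} \, \lamsym \, \eta_\ell(u_\ell^{\mm, \nn}),
\end{equation*}
which follows from the Zarantonello stopping criterion~\eqref{eq:m_stopping_criterion} inserted into the quasi-contraction~\eqref{eq2:inexact_Zarantonello_contraction}, with \(C_{\mathrm{alg}}\) as in~\eqref{eq:def:Calg}. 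Expanding and collecting terms loses a multiplicative factor in \(\lamsym / \lamsym^\star\), and the resulting threshold is precisely the \(\thetamark\) given in the theorem; the hypothesis \(\thetamark < \theta^\star\) is exactly what makes the transfer admissible.

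With the comparison lemma at hand, the quasi-minimality~\eqref{eq:marking_criterion} gives \(\#\overline{\MM}_\ell^u \lesssim \|u^\star\|_{\A_s}^{1/s} [\Eta_\ell^{\kk,\jj}]^{-1/s}\) and analogously for the dual. The combined marking in Algorithm~\ref{algorithm:afem}\ref{alg:mark} satisfies \(\#\MM_\ell \le 2 \min\{\#\overline{\MM}_\ell^u, \#\overline{\MM}_\ell^z\}\), and the elementary inequality \(\min\{A, B\} \le A^{s/(s+t)} B^{t/(s+t)}\) applied to \(A \coloneqq \|u^\star\|_{\A_s}^{1/s} [\Eta_\ell^{\kk,\jj}]^{-1/s}\) and \(B \coloneqq \|z^\star\|_{\A_t}^{1/t} [\Zeta_\ell^{\kk,\jj}]^{-1/t}\) yields
\begin{equation*}
\#\MM_\ell \lesssim \bigl(\|u^\star\|_{\A_s} \, \|z^\star\|_{\A_t}\bigr)^{1/(s+t)} \, \bigl[\Eta_\ell^{\kk,\jj} \, \Zeta_\ell^{\kk,\jj}\bigr]^{-1/(s+t)}.
\end{equation*}
Combined with the NVB closure estimate \(\#\TT_\ell - \#\TT_0 \le \Cmesh \sum_{\ell'=0}^{\ell-1} \#\MM_{\ell'}\) and full linear convergence~\eqref{eq:full_linear_convergence} (which makes \([\Eta_{\ell'}^{\kk,\jj} \Zeta_{\ell'}^{\kk,\jj}]^{-1/(s+t)}\) geometrically increasing in \(\ell'\)), the resulting sum is a geometric series bounded by its last term \([\Eta_\ell^{\kk,\jj} \Zeta_\ell^{\kk,\jj}]^{-1/(s+t)}\). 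Rearranging produces the DoF bound \((\#\TT_\ell)^{s+t} \, \Eta_\ell^{\kk,\jj} \, \Zeta_\ell^{\kk,\jj} \lesssim \|u^\star\|_{\A_s} \, \|z^\star\|_{\A_t}\), and Corollary~\ref{cor:rate_complexity} upgrades this DoF bound to the cost bound~\eqref{eq:optimal_complexity}.

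The principal difficulty is the comparison lemma: one must transport Dörfler marking from the unavailable exact discrete solution \(u_\ell^\star\) to the actually computed inexact iterate \(u_\ell^{\mm, \nn}\) without losing so much of the bulk parameter that it drops below \(\theta^\star\). The explicit formulae for \(\thetamark\), \(\lamsym^\star\), and \(C_{\mathrm{alg}}\) in the theorem statement are calibrated precisely so that the perturbation introduced by the Zarantonello and algebraic solvers is absorbed into the estimator, and the transferred threshold \(\thetamark\) still lies strictly below the maximal admissible threshold \(\theta^\star\) dictated by discrete reliability.
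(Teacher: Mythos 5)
Your proposal is correct and follows essentially the same route as the paper's proof: reduction of \eqref{eq:optimal_complexity} to a rate bound with respect to degrees of freedom via Corollary~\ref{cor:rate_complexity}, a comparison-set argument calibrated by \(\thetamark < \theta^\star\), quasi-minimality of Dörfler marking \eqref{eq:marking_criterion}, the NVB mesh-closure estimate, and a geometric-series argument driven by Theorem~\ref{lem:full_linear_convergence}; where the paper cites packaged results (the combined comparison set of \cite[Lemma~14]{fghpf2016}, the transfer of the Dörfler property in \cite[Theorem~8]{bgip2023}, and the estimator equivalence of Lemma~\ref{lem:estimator-equivalence}), you merely inline their proofs by building separate primal and dual comparison sets and combining them through \(\#\MM_\ell \le 2\min\{\#\overline{\MM}_\ell^u, \#\overline{\MM}_\ell^z\}\) together with \(\min\{A,B\} \le A^{s/(s+t)}B^{t/(s+t)}\), which is exactly the content of those auxiliary results. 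One bookkeeping slip does not harm the argument: the \(\thetamark\)-Dörfler property is arranged for the \emph{exact} discrete solution (this is where \(\thetamark < \theta^\star\) enters, so that the comparison lemma based on discrete reliability \eqref{axiom:discrete_reliability} applies), and the stability/estimator-equivalence perturbation then yields the \(\theta\)-criterion for the computed iterate \(u_\ell^{\mm,\nn}\) --- not, as you state, the \(\thetamark\)-criterion for the computed iterate --- but since \(\thetamark \ge \theta\) and all you use downstream is membership in \(\M_\ell^u[\theta, u_\ell^{\mm,\nn}]\), the conclusion is unaffected.
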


The proof of Theorem~\ref{th:optimal_complexity} employs the
following result from \cite{bhimps2023b}
providing estimator equivalence between the (unavailable)
estimators for the exact
discrete solutions~\(u_\ell^\star, z_\ell^\star\) and the
estimators at the computed approximations
\(u_\ell^{\mm, \nn}, z_\ell^{\mmu, \nnu}\).
\begin{lemma}[estimator equivalence
			{\cite[Lemma~15]{bhimps2023b}}]
	\label{lem:estimator-equivalence}
	Recall the constants \(\lamsym^\star\), $C_{\mathrm{alg}} >
		0$ from~\eqref{eq:def:Calg} and
	$\lamalg^\star > 0$ from
	Lemma~\ref{lem:inexact_Zarantonello_contraction}.
	Then, for all \(0 < \theta \le 1\), \(0 < \lamalg \le \lamalg^\star\),
	\(
	0
	<
	\lamsym
	<
	\lamsym^\star
	\),
	it holds that
	\begin{equation}\label{eq:corrigendum:equivalence}
		\medmuskip = -2mu
		\begin{aligned}
			\bigl(1 - \lamsym/ \lamsym^\star\bigr)
			\,\eta_\ell(u_\ell^{\mm, \nn})
			 & \le
			\eta_\ell(u_\ell^{\star})
			\le
			\bigl( 1 + \lamsym/ \lamsym^\star\bigr)
			\,\eta_\ell(u_\ell^{\mm, \nn})
			\,
			\text{ for all \((\ell, \mm, \nn) \in \QQ^u\)},
			\\
			\bigl( 1 - \lamsym / \lamsym^\star\bigr)
			\,\zeta_\ell(z_\ell^{\mmu, \nnu})
			 & \le
			\zeta_\ell(z_\ell^{\star})
			\le
			\bigl( 1 + \lamsym/ \lamsym^\star \bigr)
			\,\zeta_\ell(z_\ell^{\mmu, \nnu})
			\,
			\text{ for all \((\ell, \mmu, \nnu) \in \QQ^z\)}.
			\hspace{-0.7em}\qed
		\end{aligned}
	\end{equation}
\end{lemma}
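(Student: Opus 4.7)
The plan is to control $\vvvert u_\ell^\star - u_\ell^{\mm,\nn} \vvvert$ by a small multiple of $\eta_\ell(u_\ell^{\mm,\nn})$ (governed by $\lamsym$) and then transfer this to the estimator via the stability axiom~\eqref{axiom:stability}. The analogous argument for the dual variable is verbatim, so I would present only the primal case explicitly.

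First I would exploit the nested iteration $u_\ell^{\mm,0} = u_\ell^{\mm-1,\nn}$ together with the symmetrization stopping criterion~\eqref{eq:m_stopping_criterion} to conclude
\[
\vvvert u_\ell^{\mm,\nn} - u_\ell^{\mm-1,\nn} \vvvert
=
\vvvert u_\ell^{\mm,\nn} - u_\ell^{\mm,0} \vvvert
\le
\lamsym \, \eta_\ell(u_\ell^{\mm,\nn}).
\]
Combining this with the triangle inequality gives $\vvvert u_\ell^\star - u_\ell^{\mm-1,\nn} \vvvert \le \vvvert u_\ell^\star - u_\ell^{\mm,\nn} \vvvert + \lamsym \, \eta_\ell(u_\ell^{\mm,\nn})$. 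Feeding this into the final-iterate contraction~\eqref{eq2:inexact_Zarantonello_contraction} and using $\lamalg \le \lamalg^\star$ leads to
\[
\vvvert u_\ell^\star - u_\ell^{\mm,\nn} \vvvert
\le
\qsym \, \vvvert u_\ell^\star - u_\ell^{\mm,\nn} \vvvert
+
\Bigl( \qsym + \frac{2\qalg}{1-\qalg} \lamalg^\star \Bigr) \, \lamsym \, \eta_\ell(u_\ell^{\mm,\nn}).
\]
Absorbing the first term on the right-hand side (which is possible because $0 < \qsym < 1$) and using the definition of $C_{\mathrm{alg}}$ in~\eqref{eq:def:Calg} produces the key a~posteriori bound
\[
\vvvert u_\ell^\star - u_\ell^{\mm,\nn} \vvvert
\le
\lamsym \, C_{\mathrm{alg}} \, \eta_\ell(u_\ell^{\mm,\nn}).
\]

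Next, I would apply stability~\eqref{axiom:stability} on the full triangulation $\UU_H = \TT_\ell$ with $v_h = u_\ell^\star$ and $v_H = u_\ell^{\mm,\nn}$, which yields
\[
\bigl| \eta_\ell(u_\ell^\star) - \eta_\ell(u_\ell^{\mm,\nn}) \bigr|
\le
\Cstab \, \vvvert u_\ell^\star - u_\ell^{\mm,\nn} \vvvert
\le
\Cstab \, C_{\mathrm{alg}} \, \lamsym \, \eta_\ell(u_\ell^{\mm,\nn}).
\]
The definition $\lamsym^\star = \min\{1, \Cstab^{-1} C_{\mathrm{alg}}^{-1}\}$ in~\eqref{eq:def:Calg} implies $\Cstab \, C_{\mathrm{alg}} \le 1/\lamsym^\star$, so the right-hand side is bounded by $(\lamsym/\lamsym^\star) \, \eta_\ell(u_\ell^{\mm,\nn})$. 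The reverse triangle inequality then yields both halves of~\eqref{eq:corrigendum:equivalence} for the primal estimator. The dual estimate follows by exactly the same chain of arguments, replacing $\mm,\nn$ by $\mmu,\nnu$, $u$ by $z$, $\eta$ by $\zeta$, and invoking~\eqref{eq:m_stopping_criterion}--\eqref{eq:mu_stopping_criterion} and the dual part of~\eqref{eq2:inexact_Zarantonello_contraction}.

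The only subtle point is to verify that the contraction~\eqref{eq2:inexact_Zarantonello_contraction} applies to the \emph{final} iterate $m = \mm[\ell]$ regardless of whether $\mm[\ell] = 1$, which is the case where the stopping criterion triggers immediately; since $u_\ell^{0,\nn} = u_\ell^{0,0}$ is the initial guess on mesh $\TT_\ell$, the relation $u_\ell^{\mm,0} = u_\ell^{\mm-1,\nn}$ used above is valid for all $\mm \ge 1$ and no special-case analysis is needed. Everything else is a bookkeeping exercise with constants, so I would not expect any real obstacle beyond carefully tracking how the constant $C_{\mathrm{alg}}$ arises from the geometric-series-type absorption of the factor $1/(1-\qsym)$.
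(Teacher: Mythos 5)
Your proof is correct: the chain (stopping criterion \eqref{eq:m_stopping_criterion} with nested iteration, triangle inequality, final-iterate estimate \eqref{eq2:inexact_Zarantonello_contraction}, absorption of the factor $\qsym$, then stability \eqref{axiom:stability} and the definition $\lamsym^\star = \min\{1, \Cstab^{-1} C_{\mathrm{alg}}^{-1}\}$) yields exactly $\vvvert u_\ell^\star - u_\ell^{\mm,\nn} \vvvert \le C_{\mathrm{alg}}\,\lamsym\,\eta_\ell(u_\ell^{\mm,\nn})$ and hence \eqref{eq:corrigendum:equivalence}, with the dual case verbatim. The paper itself gives no proof but cites \cite[Lemma~15]{bhimps2023b}; your reconstruction is precisely the argument for which the constants $C_{\mathrm{alg}}$ and $\lamsym^\star$ in \eqref{eq:def:Calg} are tailored, so it is essentially the intended proof.
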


\begin{proof}[Proof of Theorem~\ref{th:optimal_complexity}]
	By Corollary~\ref{cor:rate_complexity}, it suffices to prove that,
	for any \(s, t > 0\),
	\begin{equation}\label{eq:proof_optimal_complexity}
		\sup \limits_{\substack{(\ell, k, j) \in \QQ}}
		\bigl(\# \TT_{\ell}\bigr)^{s+t} \, \Eta_{\ell}^{k, j} \, \Zeta_{\ell}^{k, j}
		\lesssim
		\max\{
		\|u^\star\|_{\A_s} \, \|z^\star\|_{\A_t},
		\Eta_{0}^{0,0} \, \Zeta_{0}^{0,0}
		\}.
	\end{equation}
	Since the inequality becomes trivial if either
	\(\|u^\star\|_{\A_s} = \infty\) or
	\(\|z^\star\|_{\A_t} = \infty\), we may assume
	\(\|u^\star\|_{\A_s} \, \|z^\star\|_{\A_t} <
	\infty\). The proof consists of three steps.

	\medskip
	\textbf{Step 1.}
	With
	\(
	0
	<
	\thetamark
	\coloneqq
	(\theta^{1/2}+ \, \lamsym / \lamsym^\star)^{2}  \, (1-\lamsym / \lamsym^\star)^{-{2}}
	<
	\theta^\star,
	\)
	the validity of \eqref{axiom:discrete_reliability} for both estimators and
	\cite[Lemma~14]{fghpf2016}
	guarantee the existence of sets \(\RR_{\ell^\prime} \subseteq
	\TT_{\ell^\prime}\) with \(0 \le \ell^\prime < \elll\) such
	that
	\begin{subequations}
		\begin{align}
			\# \RR_{\ell^\prime}
			\lesssim
			\bigl(
			\|u^\star\|_{\A_s} \,
			\|z^\star\|_{\A_t}
			\bigr)^{1/(s+t)} \,
			\bigl[
				\eta_{\ell^\prime}(u_{\ell^\prime}^{\star}) \, \zeta_{\ell^\prime}(z_{\ell^\prime}^{\star})
				\bigr]^{-1/(s+t)}
			\label{eq:R_estimate},
			\\
			\thetamark \, \eta_{\ell^\prime}(u_{\ell^\prime}^{\star})
			\le
			\eta_{\ell^\prime}(\RR_{\ell^\prime},  u_{\ell^\prime}^\star)
			\quad
			\text{or}
			\quad
			\thetamark \, \zeta_{\ell^\prime}(z_{\ell^\prime}^{\star})
			\le
			\zeta_{\ell^\prime}(\RR_{\ell^\prime}, z_{\ell^\prime}^\star)
			\label{eq:proof_marking}.
		\end{align}
	\end{subequations}
	For \(0 \le \ell^\prime < \elll\),  the estimator
	equivalence~\eqref{eq:corrigendum:equivalence} in
	Lemma~\ref{lem:estimator-equivalence} leads to
	\begin{equation*}
		\bigl(1-\lamsym / \lamsym^\star \bigr) \,
		\eta_{\ell'}(u_{\ell'}^{\mm, \nn})
		\le
		\eta_{\ell'}(u_{\ell'}^\star)
		\quad
		\text{and}
		\quad
		\bigl(1-\lamsym / \lamsym^\star \bigr) \,
		\zeta_{\ell'}(z_{\ell'}^{\mmu, \nnu})
		\le
		\zeta_{\ell'}(z_{\ell'}^\star)
	\end{equation*}
	and consequently with \eqref{eq:R_estimate} to
	\begin{equation}\label{eq2:R_estimator_final}
		\# \RR_{\ell^\prime}
		\lesssim
		\bigl(
		\|u^\star\|_{\A_s} \, \|z^\star\|_{\A_t}
		\bigr)^{1/(s+t)} \,
		\bigl[
			\eta_{\ell^\prime}(u_{\ell^\prime}^{\mm, \nn}) \,
			\zeta_{\ell^\prime}(z_{\ell^\prime}^{\mmu, \nnu})
			\bigr]^{-1/(s+t)}.
	\end{equation}
	Note that the stopping
	criteria~\eqref{eq:m_stopping_criterion} and \eqref{eq:mu_stopping_criterion}
	lead to
	\begin{equation*}
		\Eta_{\ell'}
		\simeq
		\vvvert u_{\ell'}^\star - u_{\ell'}^{\mm, \nn} \vvvert +
		\eta_{\ell'}(u_{\ell'}^{\mm, \nn})
		\eqreff{eq:m_stopping_criterion}
		\lesssim
		\eta_{\ell'}(u_{\ell'}^{\mm, \nn})
		\quad
		\text{and}
		\quad
		\Zeta_{\ell'}
		\simeq
		\vvvert z_{\ell'}^\star - z_{\ell'}^{\mmu, \nnu} \vvvert +
		\zeta_{\ell'}(z_{\ell'}^{\mmu, \nnu})
		\eqreff{eq:mu_stopping_criterion}
		\lesssim
		\zeta_{\ell'}(z_{\ell'}^{\mmu, \nnu})
	\end{equation*}
	and with~\eqref{eq:Eta_ell^0} to
	\begin{equation}\label{eq:Delta_estimator}
		\Eta_{\ell'+1}^{0, \jj} \, \Zeta_{\ell'+1}^{0, \jj}
		\eqreff{eq:Eta_ell^0}
		\lesssim
		\Eta_{\ell'} \, \Zeta_{\ell'}
		\lesssim
		\eta_{\ell'}(u_{\ell'}^{\mm, \nn}) \,
		\zeta_{\ell'}(z_{\ell'}^{\mmu,
			\nnu}).
	\end{equation}
	Hence, the combination of \eqref{eq2:R_estimator_final} and
	\eqref{eq:Delta_estimator} reads
	\begin{equation}\label{eq:R_Delta}
		\# \RR_{\ell^\prime}
		\lesssim
		\bigl(
		\|u^\star\|_{\A_s} \, \|z^\star\|_{\A_t}
		\bigr)^{1/(s+t)} \,
		\bigl[
			\Eta_{\ell^\prime+1}^{0, \jj} \, \Zeta_{\ell^\prime+1}^{0, \jj}
			\bigr]^{-1/(s+t)}.
	\end{equation}

	\medskip
	\textbf{Step 2.}
	Recall from \cite[Theorem~8]{bgip2023} that the set
	\(\RR_{\ell^\prime}\)
	satisfies the
	Dörfler criterion from
	Algorithm~\ref{algorithm:afem}\ref{alg:mark} with the same
	parameter \(\theta\).
	The quasi-minimality of
	\(\MM_{\ell^\prime}\) implies
	\begin{align}\label{eq:dorfleropt}
		\# \MM_{\ell^\prime}
		\le
		C_{\rm mark} \, \# \RR_{\ell^\prime}
		\quad\text{for all $0 \le
				\ell' < \elll$}
	\end{align}
	with the constant $\Cmark \ge 1$ from Algorithm~\ref{algorithm:afem}.

	\textbf{Step 3.} Let $(\ell, k, j) \in \QQ$. Full
	linear convergence \eqref{eq:full_linear_convergence} from
	Theorem~\ref{lem:full_linear_convergence} yields that
	\begin{equation}\label{eq:lin_cv_sum}
		\begin{aligned}
			\sum_{
			\substack{
			(\ell^{\prime},k^{\prime},j^{\prime}) \in \QQ
			\\
			|\ell^{\prime},k^{\prime},j^{\prime}|
			\le
			|\ell,k,j|
			}
			} \hspace{-0.5cm}
			(
			\Eta_{\ell^{\prime}}^{k^{\prime}, j^{\prime}} \,
			\Zeta_{\ell^{\prime}}^{k^{\prime}, j^{\prime}}
			)^{-1/(s+t)}
			&\eqreff*{eq:full_linear_convergence}
			\lesssim
			(\Eta_{\ell}^{k, j} \, \Zeta_{\ell}^{k, j})^{-1/(s+t)}
			\hspace{-0.5cm}
			\sum_{
			\substack{
			(\ell^{\prime},k^{\prime},j^{\prime}) \in \QQ
			\\
			|\ell^{\prime},k^{\prime},j^{\prime}|
			\le
			|\ell,k,j|
			}
			}
			\hspace{-0.5cm}
			(\qlin^{1/s})^{|\ell, k, j| - |\ell', k', j'|}
			\\
			&\lesssim
			(\Eta_{\ell}^{k, j} \, \Zeta_{\ell}^{k, j})^{-1/(s+t)}.
		\end{aligned}
	\end{equation}
	NVB refinement satisfies the mesh-closure estimate~\cite[Eqn.~(2.9)]{axioms} reading,
	\begin{align}\label{eq:meshclosure}
		\# \TT_\ell - \# \TT_0
		\le
		\Cmesh \sum_{\ell' = 0}^{\ell-1} \# \MM_{\ell'}
		\quad\text{for all \(\ell \ge
			0\)},
	\end{align}
	where $\Cmesh > 1$ depends only on $\TT_{0}$.
	Thus, for $(\ell, k, j) \in \QQ$, we have by the mesh-closure
	estimate~\eqref{eq:meshclosure}, quasi-optimality of
	D\"orfler marking \eqref{eq:dorfleropt}, and the result
	\eqref{eq:lin_cv_sum} that
	\begin{align*}
		\# \TT_{\ell} - \# \TT_{0}
		\
		 & \eqreff*{eq:meshclosure}
		\lesssim
		\
		\sum_{\ell^\prime = 0}^{\ell-1}  \# \MM_{\ell^\prime}
		\eqreff{eq:dorfleropt}
		\lesssim
		\sum_{\ell^\prime = 0}^{\ell-1}  \#
		\RR_{\ell^\prime}
		\eqreff*{eq:R_Delta}
		\lesssim
		\bigl(
		\|u^\star\|_{\A_s} \, \|z^\star\|_{\A_t}
		\bigr)^{1/(s+t)} \,
		\sum_{\ell^{\prime} = 0}^{\ell-1}
		\bigl(
		\Eta_{\ell^{\prime}+1}^{0, \jj} \, \Zeta_{\ell^{\prime}+1}^{0, \jj}
		\bigr)^{-1/(s+t)}
		\\
		 & \le
		\bigl(
		\|u^\star\|_{\A_s} \, \|z^\star\|_{\A_t}
		\bigr)^{1/(s+t)} \! \!
		\sum_{
		\substack{
		(\ell^{\prime},k^{\prime},j^{\prime}) \in \QQ
		\\
		|\ell^{\prime},k^{\prime},j^{\prime}|
		\le
		|\ell,k,j|
		}
		}
		(\Eta_{\ell'}^{k', j'} \, \Zeta_{\ell'}^{k', j'})^{-1/(s+t)}
		\\
		 & \eqreff*{eq:lin_cv_sum}
		\lesssim
		\bigl(
		\|u^\star\|_{\A_s} \, \|z^\star\|_{\A_t}
		\bigr)^{1/(s+t)}
		(\Eta_{\ell}^{k, j} \, \Zeta_{\ell}^{k, j})^{-1/(s+t)}.
	\end{align*}
	Rearranging the terms and noting that \(1 \le \# \TT_{\ell} - \# \TT_{0}\)
	implies
	$\# \TT_{\ell} - \# \TT_{0} +1 \le 2 \, (\#\TT_{\ell} - \# \TT_{0})$, we
	obtain, for \(\ell > 0\), that
	\begin{subequations}\label{eq:optimality}
		\begin{equation}\label{eq:first_optimality}
			(\# \TT_{\ell} - \# \TT_{0} + 1)^{s+t} \, \Eta_{\ell}^{k, j} \,
			\Zeta_{\ell}^{k, j}
			\lesssim
			\|u^\star\|_{\A_s} \, \|z^\star\|_{\A_t}.
		\end{equation}
		Moreover, full linear
		convergence~\eqref{eq:full_linear_convergence} proves that
		\begin{equation}\label{eq:second_optimality}
			(\# \TT_{0} - \# \TT_{0} + 1)^{s+t} \,
			\Eta_{0}^{k, j} \, \Zeta_{0}^{k, j}
			=
			\Eta_{0}^{k, j} \, \Zeta_{0}^{k, j}
			\lesssim
			\Eta_{0}^{0, 0} \, \Zeta_{0}^{0, 0}.
		\end{equation}
	\end{subequations}
	We recall from~\cite[Lemma~22]{bhp2017} that, for
	all
	$\TT_{\ell} \in \T$, it holds
	\begin{equation}\label{eq:bhp-lemma22}
		\# \TT_{\ell} - \# \TT_{0} +1
		\le
		\# \TT_{\ell}
		\le
		\# \TT_{0} \, (\# \TT_{\ell} - \# \TT_{0} +1).
	\end{equation}
	This shows, for all $(\ell, k, j) \in \QQ$,
	\begin{equation*}
		(\# \TT_{\ell})^{s+t} \, \Eta_\ell^{k,j} \, \Zeta_\ell^{k,j}
		\eqreff{eq:bhp-lemma22}
		\lesssim
		(\# \TT_{\ell} - \# \TT_{0} + 1)^{s+t} \, \Eta_\ell^{k,j} \,
		\Zeta_\ell^{k,j}
		\eqreff{eq:optimality}
		\lesssim
		\max \{
			\|u^\star\|_{\A_s} \, \|z^\star\|_{\A_t},
			\Eta_{0}^{0,0} \, \Zeta_{0}^{0,0}
		\}
	\end{equation*}
	and concludes the proof of~\eqref{eq:proof_optimal_complexity}.
\end{proof}

%

\section{Numerical examples}\label{section:numerics}
In this section, we present numerical experiments using 
the open source software package MooAFEM~\cite{mooafem}\footnote{All 
experiments presented in this paper are reproducible with 
the openly available software package 
under \url{https://www.tuwien.at/mg/asc/praetorius/software/mooafem}.}.
In the following, Step~\ref{alg:solve_primal} and \ref{alg:solve_dual} of Algorithm~\ref{algorithm:afem} employ
the optimal \(hp\)-robust local multigrid method from 
\cite{imps2022} 
as an algebraic solver. If not explicitly stated 
otherwise, we choose 
the 
parameters \(\theta = 0.5\), \(\delta = 
0.5\), \(\lamsym = \lamalg = 0.7\) in 
Algorithm~\ref{algorithm:afem} throughout the 
numerical 
experiments.

\subsection{Singularity in the goal functional} The first model problem is a 
nonsymmetric variant 
of the benchmark problem from~\cite[Section~4.1]{bgip2023} with 
a singularity 
only in the 
goal functional.
On the unit square \(\Omega = (0,1)^2 \subset \R^2\), we consider
\begin{equation}\label{eq:singularity_goal}
	- \Delta u^\star + x \cdot \nabla u^\star + 
	u^\star 
	= 
	f
	\quad \text{in } \Omega 
	\quad
	\text{subject to}
	\quad
	u^\star = 0
	\quad \text{on } \partial \Omega,
\end{equation}
where the right-hand side is chosen such that the exact solution \(u^\star\) 
reads 
\[u^\star(x) = x_1 \, x_2 \, (1-x_1) \, (1-x_2).\]
Consider \(g = 0\) and \(\gvec = \chi_K \, (1, 
0)^\top\) 
in the quantity of interest
\[
	 G(u^\star) \coloneqq \int_{K} \partial_{x_1} u^\star \d{x} = 
	 11/960
	 \quad
	 \text{with \(K 
	 	\coloneqq \mathrm{conv} \{
			(\nicefrac{1}{2}, 1), (1, 
	 	\nicefrac{1}{2}), (1,1)\}\)
		}.
\]
Figure~\ref{fig:mesh} (left) displays a 
mesh 
generated by Algorithm~\ref{algorithm:afem} and the support \(K\) of 
\(\gvec\). The error estimator captures and resolves the two 
point singularities induced by \(G\).

\subsection{Geometric singularity and strong convection} The second 
benchmark problem investigates
\(\Omega = (-1, 1)^2 \setminus \mathrm{conv}\{(0,0), (-1, 0), (-1,-1)\} \subset 
\R^2\) with 
the Dirichlet boundary \(\Gamma_D = \mathrm{conv}\{(-1,0), (0,0)\} 
\cup \, \mathrm{conv}\{(0,0), (-1,-1)\}\) and Neumann 
boundary \(\Gamma_N = \partial \Omega \setminus \Gamma_D\); see 
Figure~\ref{fig:mesh} (right) for a visualization of the geometry. 
We 
consider
\begin{equation}\label{eq:geometric_singularity}
	- \Delta u^\star + (5, 5)^\top \cdot \nabla u^\star
	= 
	1
	\ \text{in } \Omega 
	\quad \text{subject to} \quad
	u^\star = 0
	\text{ on \(\Gamma_D\) and }
	\nabla u^\star \cdot \boldsymbol{n} = 0
	\text{ on \(\Gamma_N\)}.
\end{equation}
Consider \(g = 0\) and \(\gvec = \chi_S \, (1, 
1)^\top\) in 
the 
quantity of 
interest
\[
G(u^\star) = \int_{S} \partial_{x_1} u^\star + 
\partial_{x_2} u^\star \d{x}
\quad
\text{with \(S \coloneqq (-\nicefrac{1}{2}, \nicefrac{1}{2})^2 
\cap \Omega\)}
.
\]
The exact solution \(u^\star\) is not known analytically in 
this case so that we do not have access to the 
exact goal error \(| 
G(u^\star) - G_\ell(u_\ell^{\mm, \nn}, z_\ell^{\mmu, \nnu})|\).
Figure~\ref{fig:mesh} (right) shows a mesh generated by 
Algorithm~\ref{algorithm:afem} 
as well as the configuration, i.e., the support \(S\) of 
\(\gvec\) in blue, the 
Dirichlet boundary in red solid lines, and the Neumann boundary in 
green dashed lines.
\begin{figure}[htbp!]
	\centering
	\resizebox{0.4\textwidth}{!}{
	\includegraphics{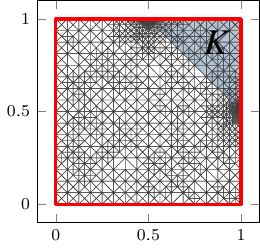}
	}
	\hfil
	\resizebox{0.4\textwidth}{!}{
	\includegraphics{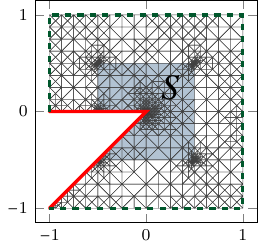}
	}
	\caption{\label{fig:mesh}  Left: Mesh \(\TT_{15}\) for the 
	problem~\eqref{eq:singularity_goal} generated by 
	Algorithm~\ref{algorithm:afem} with 
	\(\# \TT_{15} = 2315\). Right: Mesh \(\TT_{18}\) for the 
	problem~\eqref{eq:geometric_singularity} with \(\# \TT_{18} = 
	2130\), 
	where the 
	Dirichlet boundary part \(\Gamma_D\) is marked by red solid lines
	and the Neumann boundary part \(\Gamma_N\) by green dashed lines.
}
\end{figure}

\textbf{Optimality of Algorithm~\ref{algorithm:afem}.} 
Figure~\ref{fig:optimality} displays the
estimator product \(\eta_\ell(u_\ell^{\mm, \nn}) 
\, 
\zeta_\ell(z^{\underline{\mu}, \underline{\nu}})\) 
and the goal error 
\(| G(u^\star) - G_\ell(u_\ell^{\mm, \nn}, 
z_\ell^{\underline{\mu}, \underline{\nu}}) |\) 
from~\eqref{eq:goalError:Estimate} for the 
problem~\eqref{eq:singularity_goal}, due to higher-order 
approximations, we only show results prior to machine precision.
For all investigated polynomial degrees 
\(p\), the goal error and the estimator product are 
indeed 
equivalent. Algorithm~\ref{algorithm:afem} 
achieves the optimal rate 
\(-p\) with 
respect to the cumulative computational work and with respect to the 
cumulative computational time in Figure~\ref{fig:optimality} for 
problem~\eqref{eq:singularity_goal} and Figure~\ref{fig:Zshape_convergence} for 
problem~\eqref{eq:geometric_singularity}.
Figure~\ref{fig:timing} shows that the proposed algorithm  indeed achieves 
linear complexity and is substantially faster than the \textsc{Matlab} built-in 
direct solver as the slightly larger slope of the
latter indicates super-linear complexity. 
Table~\ref{tab:cost} displays the weighted costs 
\begin{equation}\label{eq:weighted_costs}
	\eta_{\ell}(u_\ell^{\underline{m}, \underline{n}}) \, 
	\zeta_{\ell}(z_\ell^{\underline{\mu}, \underline{\nu}})
	\Bigl(\sum_{
		\substack{(\ell', k', j') \in \QQ 
			\\ |\ell', k', j'| \le  		|\ell,\kk,\jj|}
		} 
            \mathtt{time}(\ell', k', j') \Bigr)^p
\end{equation}
of 
Algorithm~\ref{algorithm:afem} for polynomial degree \(p=2\) with \(\mathtt{time}(\ell', k', j')\) in seconds and 
highlights the corresponding optimal choices of the parameters. 
This justifies the 
selection of \(\theta = 0.5\) together with 
larger symmetrization 
parameter \(\lamsym = 0.7\), and algebraic solver parameter \(\lamalg = 0.7\).
The table for the second benchmark problem
    from~\eqref{eq:geometric_singularity} leads to similar results and is therefore omitted.
While the choice of the damping parameter \(0 < \delta < 2\alpha/L^2\)
in~\eqref{eq:Zarantonello-iterations} is crucial in the case of large convection
to guarantee the contraction property~\eqref{eq:Zarantonello-contraction},
the adaptivity parameters appear more robust with respect to
other coefficients in~\eqref{eq:model_problem}.

Finally, in Figure~\ref{fig:iterations}, we display the number of total solver
steps \(| \ell, \underline{m}, \underline{n}| - |\ell, 0, 0|\) resp.\ \(| \ell, \underline{\mu}, \underline{\nu}| - |\ell, 0, 0|\) on each mesh level for both
benchmark problems~\eqref{eq:singularity_goal} and~\eqref{eq:geometric_singularity}.
The plots show that the two iterations often stop after the same number of steps.
\begin{figure}
	\resizebox{\textwidth}{!}{
\pgfplotstableread[col sep = comma]{
	ndofs,  nElems, combinedEstimator,  GoalError,time_cumulative,relative_time, work
	  41,     126,      2.31414e-02,3.84768e-03,    1.07724e+00,  2.15449e-02,5.00000e+01
	  73,     242,      1.47569e-02,1.52268e-03,    1.27295e+00,  1.15723e-02,1.60000e+02
	 137,     510,      8.54214e-03,6.45762e-04,    1.41798e+00,  6.62606e-03,3.74000e+02
	 278,    1052,      4.39263e-03,2.82059e-04,    1.47635e+00,  3.15459e-03,8.42000e+02
	 558,    2130,      2.30667e-03,1.26305e-04,    1.57169e+00,  1.58436e-03,1.83400e+03
	1115,    4348,      1.16646e-03,5.71738e-05,    1.62370e+00,  7.98280e-04,3.86800e+03
	2244,    9050,      5.89691e-04,2.66693e-05,    1.69571e+00,  4.02589e-04,8.08000e+03
	4622,   18072,      2.90811e-04,1.22185e-05,    1.80404e+00,  2.03616e-04,1.69400e+04
	9174,   37713,      1.48216e-04,5.84359e-06,    1.98274e+00,  1.11390e-04,3.47400e+04
   19063,   76722,      7.18022e-05,2.69668e-06,    2.30891e+00,  6.18944e-05,7.20440e+04
   38644,  156191,      3.58197e-05,1.31114e-06,    2.97516e+00,  3.90646e-05,1.48204e+05
   78511,  313751,      1.76320e-05,6.20380e-07,    4.30442e+00,  2.77054e-05,3.03568e+05
  157451,  634849,      8.86721e-06,3.08135e-07,    7.31765e+00,  2.34087e-05,6.16172e+05
  318270, 1272065,      4.36095e-06,1.47944e-07,    1.35168e+01,  2.13480e-05,1.24933e+06
  637198, 2558227,      2.19933e-06,7.46053e-08,    2.66353e+01,  2.09770e-05,2.51907e+06
 1280829, 5106128,      1.08454e-06,3.61679e-08,    5.23958e+01,  2.05088e-05,5.07387e+06
 2555433,10221606,      5.49089e-07,1.83987e-08,    1.04429e+02,  2.04706e-05,1.01753e+07
 5114242,10221606,      2.71280e-07,8.97122e-09,    1.42428e+02,  1.39434e-05,2.03900e+07
}{\ComplexityOne}%
\pgfplotstableread[col sep = comma]{
	ndofs,  nElems,combinedEstimator,  GoalError,time_cumulative,relative_time, work
	 145,     104,      2.99996e-03,4.85034e-04,    1.07478e-01,  4.75566e-04,5.65000e+02
	 229,     176,      1.14929e-03,1.51594e-04,    3.01815e-01,  7.98452e-04,9.43000e+02
	 383,     310,      4.48363e-04,4.77986e-05,    3.82502e-01,  5.92108e-04,1.58900e+03
	 659,     446,      1.45652e-04,1.50284e-05,    4.68814e-01,  4.02070e-04,2.75500e+03
	 935,     722,      5.74064e-05,4.86716e-06,    5.34872e-01,  3.14261e-04,4.45700e+03
	1509,    1122,      2.37864e-05,1.57423e-06,    6.20398e-01,  2.24619e-04,7.21900e+03
	2327,    1984,      8.04252e-06,5.54883e-07,    7.31099e-01,  1.69001e-04,1.15450e+04
	4063,    3848,      2.62902e-06,1.97043e-07,    8.95769e-01,  1.15583e-04,1.92950e+04
	7851,    7009,      7.82524e-07,7.45983e-08,    1.20223e+00,  7.96917e-05,3.43810e+04
   14216,   11906,      1.97669e-07,1.97961e-08,    1.83029e+00,  6.62092e-05,7.58470e+04
   24117,   22924,      6.88793e-08,8.74250e-09,    2.62110e+00,  5.57468e-05,1.22865e+05
   46223,   37807,      1.75803e-08,2.82370e-09,    4.22105e+00,  4.64106e-05,2.59290e+05
   76144,   73870,      6.51940e-09,1.35305e-09,    6.71556e+00,  4.47191e-05,4.09462e+05
  148457,  127726,      1.67137e-09,4.68765e-10,    1.20571e+01,  4.10037e-05,8.50537e+05
  256403,  248864,      5.10332e-10,1.60182e-10,    2.17085e+01,  4.26487e-05,1.61405e+06
  499115,  441210,      1.21067e-10,3.94533e-11,    4.22437e+01,  4.25549e-05,3.59942e+06
  884067,  815212,      3.93167e-11,1.38685e-11,    7.77253e+01,  4.41233e-05,6.24174e+06
 1632989, 1562897,      1.23394e-11,4.93028e-12,    1.43443e+02,  4.40587e-05,1.11253e+07
 3128892, 2746491,      2.90413e-12,1.37136e-12,    2.83767e+02,  4.54361e-05,2.36161e+07
 5497874, 2746491,      1.00164e-12,6.48438e-13,    4.77237e+02,  4.34793e-05,4.00804e+07
}{\ComplexityTwo}%
\pgfplotstableread[col sep = comma]{
	ndofs,  nElems,combinedEstimator,  GoalError,time_cumulative,relative_time, work
	 313,     104,      1.76177e-04,3.46968e-05,    2.77032e-01,  5.22702e-04,3.18000e+03
	 499,     171,      6.45340e-05,1.07525e-05,    3.79052e-01,  4.31722e-04,4.05800e+03
	 808,     267,      2.71487e-05,3.31175e-06,    5.13490e-01,  3.50266e-04,5.52400e+03
	1255,     367,      8.87150e-06,1.05336e-06,    6.00886e-01,  2.61255e-04,7.82400e+03
	1714,     428,      2.33261e-06,1.89897e-07,    7.34921e-01,  2.30962e-04,1.25970e+04
	1993,     518,      9.91448e-07,6.54185e-08,    8.47784e-01,  2.27776e-04,1.63190e+04
	2404,     762,      4.19770e-07,2.39479e-08,    9.65629e-01,  2.13635e-04,2.08390e+04
	3520,    1128,      1.69137e-07,9.32184e-09,    1.13130e+00,  1.69356e-04,2.75190e+04
	5188,    1534,      6.11080e-08,3.95088e-09,    1.37915e+00,  1.38859e-04,3.74510e+04
	7048,    2618,      2.01365e-08,1.75986e-09,    1.71713e+00,  1.27007e-04,5.09710e+04
   11959,    4082,      4.51882e-09,5.59880e-10,    2.19206e+00,  9.44447e-05,8.57860e+04
   18628,    5546,      1.21680e-09,1.91101e-10,    2.78143e+00,  7.67840e-05,1.40122e+05
   25258,    7752,      3.58910e-10,6.68841e-11,    3.55788e+00,  7.21446e-05,2.14096e+05
   35230,   12322,      1.20665e-10,3.33714e-11,    4.42298e+00,  6.40269e-05,2.83176e+05
   55858,   17408,      3.26592e-11,1.17908e-11,    5.88676e+00,  5.34752e-05,4.48302e+05
   78859,   24628,      1.12361e-11,4.14505e-12,    8.13363e+00,  5.22626e-05,6.81747e+05
  111475,   37959,      3.98502e-12,2.05838e-12,    1.08345e+01,  4.91679e-05,9.02105e+05
  171559,   56639,      1.02501e-12,7.09925e-13,    1.55623e+01,  4.57516e-05,1.41233e+06
  255829,   86892,      3.72847e-13,3.45583e-13,    2.17354e+01,  4.27990e-05,1.92018e+06
  392257,  126585,      7.89995e-14,7.50702e-14,    3.32486e+01,  4.26512e-05,3.47927e+06
  571045,  174324,      1.98672e-14,1.04691e-14,    4.95826e+01,  4.36296e-05,5.75216e+06
  786175,  265274,      6.68642e-15,4.81733e-15,    7.07131e+01,  4.51700e-05,8.10038e+06
 1195927,  422902,      2.68592e-15,1.61451e-14,    1.00333e+02,  4.21024e-05,1.04835e+07
 1905679,  565556,      5.11277e-16,4.43812e-14,    1.64494e+02,  4.32778e-05,1.99857e+07
 2548006,  878609,      1.61276e-16,6.45838e-14,    2.42529e+02,  4.77044e-05,2.76117e+07
 3957715, 1294874,      6.68162e-17,1.13002e-13,    3.39430e+02,  4.29684e-05,3.55112e+07
 5831854, 1294874,      1.62050e-17,1.85827e-13,    5.05381e+02,  4.34026e-05,5.87993e+07
}{\ComplexityThree}%
\begin{tikzpicture}[scale=1]
	\begin{loglogaxis}[xlabel = {$\sum_{| \ell', k', j' | \le | \ell, \underline{k}, \underline{j} |} {\rm dim} \, \mathcal{X}_{\ell'}$}, 
			ylabel={error},
			ymin = 1e-18,
			each nth point=2,line join = round,
			ymajorgrids      = true,%
			grid style={densely dotted, semithick},
			line cap=round]
			
			\coordinate (legend) at (axis description cs:0.33,0.2);
			\addplot [pyBlue,thick,mark=*,mark options={solid, 
			fill=pyBlue!50!white, scale = 1.1}, solid] table [x={work},  
			y={combinedEstimator}]{\ComplexityOne}; \label{Figure3_left:EstimatorOne}
			\addplot [pyRed, first = 27, thick,mark=otimes*,mark 
			options={solid,fill=pyRed!50!white, scale = 1.1}, solid] table 
			[x={work}, y={combinedEstimator}]{\ComplexityTwo}; 
			\label{Figure3_left:EstimatorTwo}
			\addplot [pyGreen, first = 28, thick,mark=oplus*,mark 
			options={solid,fill=pyGreen!50!white, scale = 1.1}, solid] table 
			[x={work}, y={combinedEstimator}]{\ComplexityThree}; 
			\label{Figure3_left:EstimatorThree}
			
			\addplot [pyOrange,thick,mark=diamond*,mark options={solid, 
			fill=pyOrange!50!white, scale = 1.1}, solid] 
			table [x={work}, y={GoalError}]{\ComplexityOne}; \label{Figure3_left:quasiErrorOne}
			\addplot [pyPurple,first = 27, thick,mark=halfdiamond*,mark 
			options={solid,fill=pyPurple!50!white, scale = 1.1}, 
			solid] 
			table 
			[x={work}, y={GoalError}]{\ComplexityTwo}; \label{Figure3_left:quasiErrorTwo}
			\addplot [pyYellow,first = 22, thick,mark=halfsquare*,mark 
			options={solid,fill=pyYellow!50!white, scale = 1.1}, solid] 
			table [x={work}, y={GoalError}]{\ComplexityThree}; \label{Figure3_left:quasiErrorThree}
			\logLogSlope[reference]{0.5}{0.8}{0.3}{-1}{4pt}
			\logLogSlope[reference]{0.65}{0.56}{0.23}{-2}{4pt}
			\logLogSlope[reference]{0.6}{0.4}{0.23}{-3}{-5pt}
		\end{loglogaxis}
		\matrix [
		matrix of nodes,
		anchor=center,
		font=\scriptsize,
		line join = round,
		line cap=round
		] at (legend) {
			estimator product & goal error & \\
			\ref{Figure3_left:EstimatorOne} & \ref{Figure3_left:quasiErrorOne} &$p = 1$\\
			\ref{Figure3_left:EstimatorTwo} & \ref{Figure3_left:quasiErrorTwo} & $p = 2$\\
			\ref{Figure3_left:EstimatorThree} & \ref{Figure3_left:quasiErrorThree} & $p = 3$\\
		};
\end{tikzpicture}
		\hfil
		\pgfplotstableread[col sep = comma]{
	ndofs,  nElems, combinedEstimator,  GoalError,time_cumulative,relative_time, work
	  41,     126,      2.31414e-02,3.84768e-03,    1.07724e+00,  2.15449e-02,5.00000e+01
	  73,     242,      1.47569e-02,1.52268e-03,    1.27295e+00,  1.15723e-02,1.60000e+02
	 137,     510,      8.54214e-03,6.45762e-04,    1.41798e+00,  6.62606e-03,3.74000e+02
	 278,    1052,      4.39263e-03,2.82059e-04,    1.47635e+00,  3.15459e-03,8.42000e+02
	 558,    2130,      2.30667e-03,1.26305e-04,    1.57169e+00,  1.58436e-03,1.83400e+03
	1115,    4348,      1.16646e-03,5.71738e-05,    1.62370e+00,  7.98280e-04,3.86800e+03
	2244,    9050,      5.89691e-04,2.66693e-05,    1.69571e+00,  4.02589e-04,8.08000e+03
	4622,   18072,      2.90811e-04,1.22185e-05,    1.80404e+00,  2.03616e-04,1.69400e+04
	9174,   37713,      1.48216e-04,5.84359e-06,    1.98274e+00,  1.11390e-04,3.47400e+04
   19063,   76722,      7.18022e-05,2.69668e-06,    2.30891e+00,  6.18944e-05,7.20440e+04
   38644,  156191,      3.58197e-05,1.31114e-06,    2.97516e+00,  3.90646e-05,1.48204e+05
   78511,  313751,      1.76320e-05,6.20380e-07,    4.30442e+00,  2.77054e-05,3.03568e+05
  157451,  634849,      8.86721e-06,3.08135e-07,    7.31765e+00,  2.34087e-05,6.16172e+05
  318270, 1272065,      4.36095e-06,1.47944e-07,    1.35168e+01,  2.13480e-05,1.24933e+06
  637198, 2558227,      2.19933e-06,7.46053e-08,    2.66353e+01,  2.09770e-05,2.51907e+06
 1280829, 5106128,      1.08454e-06,3.61679e-08,    5.23958e+01,  2.05088e-05,5.07387e+06
 2555433,10221606,      5.49089e-07,1.83987e-08,    1.04429e+02,  2.04706e-05,1.01753e+07
 5114242,10221606,      2.71280e-07,8.97122e-09,    1.42428e+02,  1.39434e-05,2.03900e+07
}{\ComplexityOne}%
\pgfplotstableread[col sep = comma]{
	ndofs,  nElems,combinedEstimator,  GoalError,time_cumulative,relative_time, work
	 145,     104,      2.99996e-03,4.85034e-04,    1.07478e-01,  4.75566e-04,5.65000e+02
	 229,     176,      1.14929e-03,1.51594e-04,    3.01815e-01,  7.98452e-04,9.43000e+02
	 383,     310,      4.48363e-04,4.77986e-05,    3.82502e-01,  5.92108e-04,1.58900e+03
	 659,     446,      1.45652e-04,1.50284e-05,    4.68814e-01,  4.02070e-04,2.75500e+03
	 935,     722,      5.74064e-05,4.86716e-06,    5.34872e-01,  3.14261e-04,4.45700e+03
	1509,    1122,      2.37864e-05,1.57423e-06,    6.20398e-01,  2.24619e-04,7.21900e+03
	2327,    1984,      8.04252e-06,5.54883e-07,    7.31099e-01,  1.69001e-04,1.15450e+04
	4063,    3848,      2.62902e-06,1.97043e-07,    8.95769e-01,  1.15583e-04,1.92950e+04
	7851,    7009,      7.82524e-07,7.45983e-08,    1.20223e+00,  7.96917e-05,3.43810e+04
   14216,   11906,      1.97669e-07,1.97961e-08,    1.83029e+00,  6.62092e-05,7.58470e+04
   24117,   22924,      6.88793e-08,8.74250e-09,    2.62110e+00,  5.57468e-05,1.22865e+05
   46223,   37807,      1.75803e-08,2.82370e-09,    4.22105e+00,  4.64106e-05,2.59290e+05
   76144,   73870,      6.51940e-09,1.35305e-09,    6.71556e+00,  4.47191e-05,4.09462e+05
  148457,  127726,      1.67137e-09,4.68765e-10,    1.20571e+01,  4.10037e-05,8.50537e+05
  256403,  248864,      5.10332e-10,1.60182e-10,    2.17085e+01,  4.26487e-05,1.61405e+06
  499115,  441210,      1.21067e-10,3.94533e-11,    4.22437e+01,  4.25549e-05,3.59942e+06
  884067,  815212,      3.93167e-11,1.38685e-11,    7.77253e+01,  4.41233e-05,6.24174e+06
 1632989, 1562897,      1.23394e-11,4.93028e-12,    1.43443e+02,  4.40587e-05,1.11253e+07
 3128892, 2746491,      2.90413e-12,1.37136e-12,    2.83767e+02,  4.54361e-05,2.36161e+07
 5497874, 2746491,      1.00164e-12,6.48438e-13,    4.77237e+02,  4.34793e-05,4.00804e+07
}{\ComplexityTwo}%
\pgfplotstableread[col sep = comma]{
	ndofs,  nElems,combinedEstimator,  GoalError,time_cumulative,relative_time, work
	 313,     104,      1.76177e-04,3.46968e-05,    2.77032e-01,  5.22702e-04,3.18000e+03
	 499,     171,      6.45340e-05,1.07525e-05,    3.79052e-01,  4.31722e-04,4.05800e+03
	 808,     267,      2.71487e-05,3.31175e-06,    5.13490e-01,  3.50266e-04,5.52400e+03
	1255,     367,      8.87150e-06,1.05336e-06,    6.00886e-01,  2.61255e-04,7.82400e+03
	1714,     428,      2.33261e-06,1.89897e-07,    7.34921e-01,  2.30962e-04,1.25970e+04
	1993,     518,      9.91448e-07,6.54185e-08,    8.47784e-01,  2.27776e-04,1.63190e+04
	2404,     762,      4.19770e-07,2.39479e-08,    9.65629e-01,  2.13635e-04,2.08390e+04
	3520,    1128,      1.69137e-07,9.32184e-09,    1.13130e+00,  1.69356e-04,2.75190e+04
	5188,    1534,      6.11080e-08,3.95088e-09,    1.37915e+00,  1.38859e-04,3.74510e+04
	7048,    2618,      2.01365e-08,1.75986e-09,    1.71713e+00,  1.27007e-04,5.09710e+04
   11959,    4082,      4.51882e-09,5.59880e-10,    2.19206e+00,  9.44447e-05,8.57860e+04
   18628,    5546,      1.21680e-09,1.91101e-10,    2.78143e+00,  7.67840e-05,1.40122e+05
   25258,    7752,      3.58910e-10,6.68841e-11,    3.55788e+00,  7.21446e-05,2.14096e+05
   35230,   12322,      1.20665e-10,3.33714e-11,    4.42298e+00,  6.40269e-05,2.83176e+05
   55858,   17408,      3.26592e-11,1.17908e-11,    5.88676e+00,  5.34752e-05,4.48302e+05
   78859,   24628,      1.12361e-11,4.14505e-12,    8.13363e+00,  5.22626e-05,6.81747e+05
  111475,   37959,      3.98502e-12,2.05838e-12,    1.08345e+01,  4.91679e-05,9.02105e+05
  171559,   56639,      1.02501e-12,7.09925e-13,    1.55623e+01,  4.57516e-05,1.41233e+06
  255829,   86892,      3.72847e-13,3.45583e-13,    2.17354e+01,  4.27990e-05,1.92018e+06
  392257,  126585,      7.89995e-14,7.50702e-14,    3.32486e+01,  4.26512e-05,3.47927e+06
  571045,  174324,      1.98672e-14,1.04691e-14,    4.95826e+01,  4.36296e-05,5.75216e+06
  786175,  265274,      6.68642e-15,4.81733e-15,    7.07131e+01,  4.51700e-05,8.10038e+06
 1195927,  422902,      2.68592e-15,1.61451e-14,    1.00333e+02,  4.21024e-05,1.04835e+07
 1905679,  565556,      5.11277e-16,4.43812e-14,    1.64494e+02,  4.32778e-05,1.99857e+07
 2548006,  878609,      1.61276e-16,6.45838e-14,    2.42529e+02,  4.77044e-05,2.76117e+07
 3957715, 1294874,      6.68162e-17,1.13002e-13,    3.39430e+02,  4.29684e-05,3.55112e+07
 5831854, 1294874,      1.62050e-17,1.85827e-13,    5.05381e+02,  4.34026e-05,5.87993e+07
}{\ComplexityThree}%

	\begin{tikzpicture}[scale=1]
		\begin{loglogaxis}[xlabel = {cumulative time [s]\vphantom{$\sum_{| 
		\ell', k', j' | \le | \ell, \underline{k}, \underline{j} |} {\rm dim} 
		\, \mathcal{X}_{\ell'}$}}, 
			ylabel={error},
			ymin = 1e-18,
			each nth point=2,line join = round,
			ymajorgrids      = true,%
			grid style={densely dotted, semithick},
			line cap=round]
			
			\coordinate (legend) at (axis description cs:0.33,0.17);
			\addplot [pyBlue,thick,mark=*,mark options={solid, 
				fill=pyBlue!50!white, scale = 1.1}, solid] table 
				[x={time_cumulative},  
			y={combinedEstimator}]{\ComplexityOne}; \label{Figure3_right:EstimatorOne}
			\addplot [pyRed, first = 27, thick,mark=otimes*,mark 
			options={solid,fill=pyRed!50!white, scale = 1.1}, solid] table 
			[x={time_cumulative}, y={combinedEstimator}]{\ComplexityTwo}; 
			\label{Figure3_right:EstimatorTwo}
			\addplot [pyGreen, first = 28, thick,mark=oplus*,mark 
			options={solid,fill=pyGreen!50!white, scale = 1.1}, solid] table 
			[x={time_cumulative}, y={combinedEstimator}]{\ComplexityThree}; 
			\label{Figure3_right:EstimatorThree}
			
			\addplot [pyOrange,thick,mark=diamond*,mark options={solid, 
				fill=pyOrange!50!white, scale = 1.1}, solid] 
			table [x={time_cumulative},  
			y={GoalError}]{\ComplexityOne}; \label{Figure3_right:quasiErrorOne}
			\addplot [pyPurple, first = 27, thick,mark=halfdiamond*,mark 
			options={solid,fill=pyPurple!50!white, scale = 1.1}, 
			solid] 
			table 
			[x={time_cumulative}, y={GoalError}]{\ComplexityTwo}; 
			\label{Figure3_right:quasiErrorTwo}
			\addplot [pyYellow, , first = 22, thick,mark=halfsquare*,mark 
			options={solid,fill=pyYellow!50!white, scale = 1.1}, solid] 
			table [x={time_cumulative}, y={GoalError}]{\ComplexityThree}; 
			\label{Figure3_right:quasiErrorThree}
			\logLogSlope[reference]{0.45}{0.75}{0.3}{-1}{4pt}
			\logLogSlope[reference]{0.6}{0.52}{0.27}{-2}{4pt}
			\logLogSlope[reference]{0.65}{0.23}{0.25}{-3}{-5pt}
		\end{loglogaxis}
		\matrix [
		matrix of nodes,
		anchor=center,
		font=\scriptsize,
		line join = round,
		line cap=round
		] at (legend) {
			estimator product & goal error & \\
			\ref{Figure3_right:EstimatorOne} & \ref{Figure3_right:quasiErrorOne} &$p = 1$\\
			\ref{Figure3_right:EstimatorTwo} & \ref{Figure3_right:quasiErrorTwo} & $p = 2$\\
			\ref{Figure3_right:EstimatorThree} & \ref{Figure3_right:quasiErrorThree} & $p = 3$\\
		};
	\end{tikzpicture}
	}
	\hspace{-0.4cm}
	\caption{Convergence history plot of 
	estimator product 
	\(\eta_\ell(u_\ell^{\mm, \nn}) \, 
		\zeta_\ell(z^{\underline{\mu}, \underline{\nu}})\) 
		indicated by bullets and goal error 
	from~\eqref{eq:goalError:Estimate} indicated by 
	diamonds with respect to the 
	cumulative 
	computational work (left) 
	and with respect to the cumulative computational time (right) for the benchmark 
	problem~\eqref{eq:singularity_goal}.\label{fig:optimality}}
\end{figure}
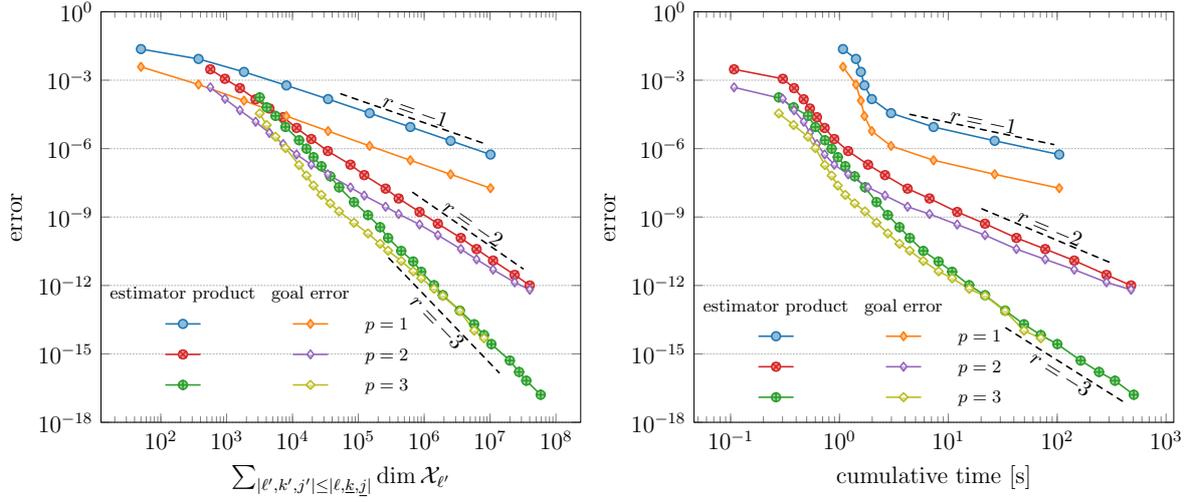
\begin{figure}
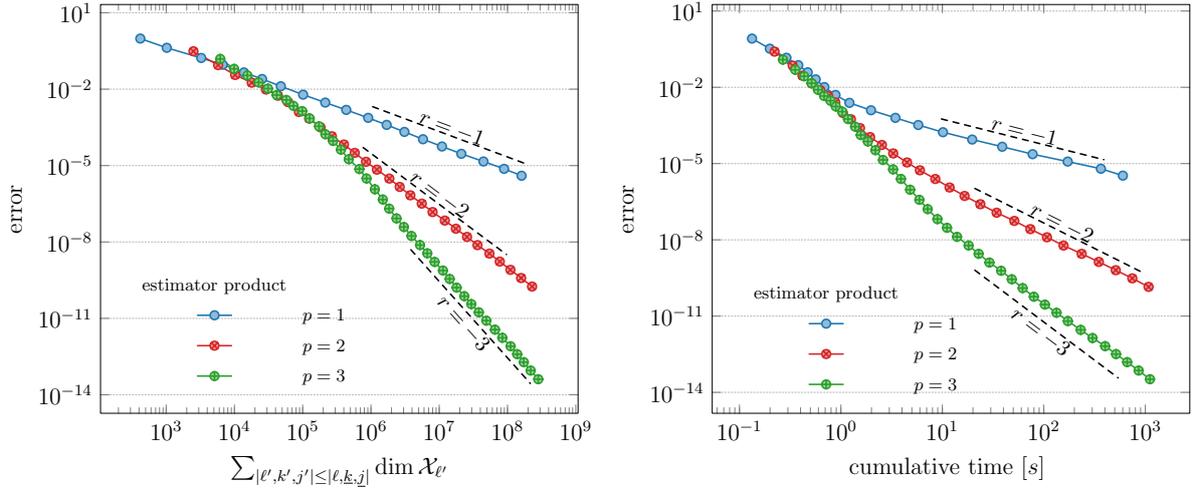

	\resizebox{\textwidth}{!}{
		\input{numerics/geometricSingularity/convergence_complexity}
		\hfil
		\input{numerics/geometricSingularity/estimator_time}
	}
	\hspace{-0.4cm}
	\caption{Convergence history plot of 
		estimator product 
		\(\eta_\ell(u_\ell^{\mm, \nn}) \, 
			\zeta_\ell(z^{\underline{\mu}, \underline{\nu}})\)  with respect 
			to the cumulative 
		computational 
		cost (left) and the cumulative 
		computational time (right) for the benchmark 
		problem~\eqref{eq:geometric_singularity}.\label{fig:Zshape_convergence}}
\end{figure}
\begin{figure}
	\centering
	\resizebox{0.55\textwidth}{!}{
		\input{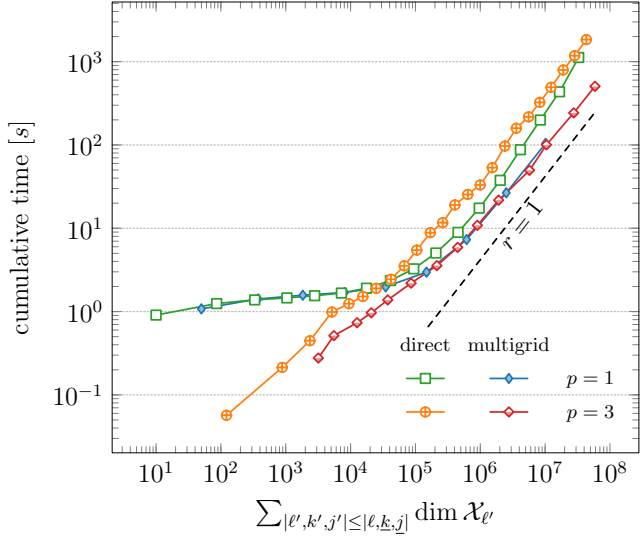}
	}
	\caption{Comparison of cumulative time of the local 
	multigrid solver with the \textsc{Matlab} built-in direct solver 
	\texttt{mldivide} with respect to the 
	cumulative computational cost for the benchmark 
	problem~\eqref{eq:geometric_singularity}.\label{fig:timing}}
\end{figure}

	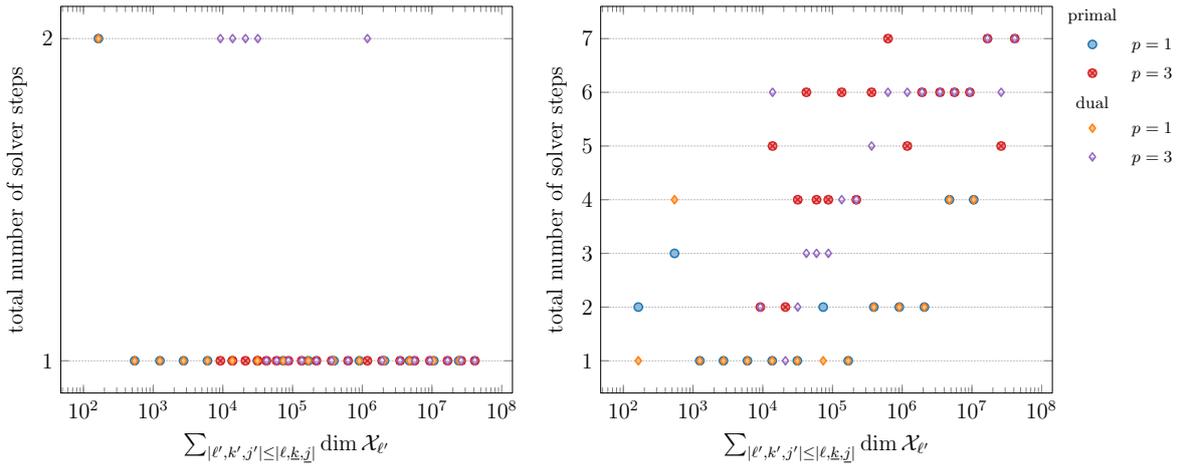
\begin{figure}
	\resizebox{\textwidth}{!}{
\pgfplotstableread[col sep =
     comma]{
     ndofs, combinedEstimator, work, jPrimal, jDual
     41,0.02314,164,2,2
     64,0.017263,356,2,1
     93,0.013565,542,1,1
     136,0.0089772,814,1,1
     218,0.0060677,1250,1,1
     285,0.0044292,1820,1,1
     456,0.0028954,2732,1,1
     658,0.0019986,4048,1,1
     987,0.0013084,6022,1,1
     1509,0.00087928,9040,1,1
     2302,0.00057421,13644,1,1
     3606,0.00036821,20856,1,1
     5220,0.00025243,31296,1,1
     8070,0.00016145,47436,1,1
     13084,0.00010323,73604,1,1
     18673,7.0037e-05,1.1095e+05,1,1
     28700,4.5331e-05,1.6835e+05,1,1
     45494,2.9799e-05,2.5934e+05,1,1
     66484,1.9707e-05,3.9231e+05,1,1
     1.0217e+05,1.283e-05,5.9665e+05,1,1
     1.56e+05,8.6083e-06,9.0865e+05,1,1
     2.4004e+05,5.5648e-06,1.3887e+06,1,1
     3.4634e+05,3.7968e-06,2.0814e+06,1,1
     5.1006e+05,2.5333e-06,3.1015e+06,1,1
     8.2768e+05,1.6501e-06,4.7569e+06,1,1
     1.1762e+06,1.1145e-06,7.1094e+06,1,1
     1.7585e+06,7.3515e-07,1.0626e+07,1,1
     2.7611e+06,4.949e-07,1.6148e+07,1,1
     4.0556e+06,3.2555e-07,2.426e+07,1,1
}{\OrderOne}%
\pgfplotstableread[col sep = comma]{
     ndofs, combinedEstimator, work, jPrimal, jDual
     466,7.2536e-05,9223,1,2
     646,3.8983e-05,11161,1,2
     880,1.9053e-05,13801,1,2
     1120,8.7334e-06,17161,1,2
     1324,4.1114e-06,21133,1,2
     1606,1.6094e-06,25951,1,2
     1927,8.2623e-07,31732,1,2
     2419,3.3736e-07,36570,1,1
     2833,1.651e-07,42236,1,1
     3580,8.2397e-08,49396,1,1
     4849,3.9859e-08,59094,1,1
     6034,1.8478e-08,71162,1,1
     8050,8.8317e-09,87262,1,1
     10606,3.3767e-09,1.0847e+05,1,1
     13582,1.5105e-09,1.3564e+05,1,1
     18868,6.6882e-10,1.7337e+05,1,1
     23431,3.0837e-10,2.2024e+05,1,1
     31639,1.3532e-10,2.8351e+05,1,1
     40168,5.4534e-11,3.6385e+05,1,1
     55852,2.3484e-11,4.7555e+05,1,1
     74947,1.0547e-11,6.2545e+05,1,1
     92419,4.8443e-12,8.1029e+05,1,1
     1.2457e+05,2.1146e-12,1.184e+06,1,2
     1.5692e+05,8.8752e-13,1.4978e+06,1,1
     2.1504e+05,3.9938e-13,1.9279e+06,1,1
     2.8435e+05,1.9034e-13,2.781e+06,1,2
     3.5304e+05,8.6053e-14,3.487e+06,1,1
     4.7027e+05,3.8595e-14,4.4276e+06,1,1
     6.019e+05,1.6221e-14,5.6314e+06,1,1
     8.0253e+05,7.4316e-15,7.2365e+06,1,1
     1.0506e+06,3.6497e-15,9.3377e+06,1,1
     1.3176e+06,1.706e-15,1.329e+07,1,2
     1.7249e+06,7.821e-16,1.674e+07,1,1
     2.2187e+06,3.4361e-16,2.1178e+07,1,1
     2.5482e+06,1.749e-16,2.6274e+07,1,1
     3.2902e+06,9.615e-17,3.2854e+07,1,1
     4.1561e+06,5.2051e-17,4.1167e+07,1,1
}{\OrderThree}%

\begin{tikzpicture}[scale=1]
     \begin{semilogxaxis}[xlabel={$\sum_{| \ell', k', j' | \le | \ell, \underline{k},
                                   \underline{j} |} {\rm
                                   dim} \, \mathcal{X}_{\ell'}$},
               ylabel={total number of solver steps},
               ytick = {1, 2},
               ymajorgrids      = true,%
               grid style={densely dotted, semithick},
               each nth point=2,line join = round,
               line cap=round]

          \coordinate (legend) at (axis description cs:0.5,0.2);
          \addplot [pyBlue,thick,mark=*,mark
          options={solid,fill=pyBlue!50!white, scale = 1.1}, only marks,
          solid] table [x={work},
                    y={jPrimal}] {\OrderOne}; \label{primalOrderOne}
          \addplot [pyRed, thick, mark=otimes*,mark
          options={solid,fill=pyRed!50!white, scale = 1.1}, only marks, solid]
          table
               [x={work}, y={jPrimal}] {\OrderThree}; \label{primalOrderThree}

          \addplot [pyOrange,thick,mark=diamond*,mark
          options={solid,fill=pyOrange!50!white, scale = 1.1}, only marks, solid] table [x={work},
                    y={jDual}] {\OrderOne}; \label{dualOrderOne}
          \addplot [pyPurple, thick, mark=halfdiamond*,mark
          options={solid,fill=pyPurple!50!white, scale = 1.1}, only marks, solid]
          table
               [x={work}, y={jDual}] {\OrderThree}; \label{dualOrderThree}

     \end{semilogxaxis}

\end{tikzpicture}
	\hfil
\pgfplotstableread[col sep =
  comma]{
  ndofs, combined_estimator, complexity, jPrimal, jDual
  24,1.8008,72,2,1
  37,0.95787,516,7,5
  49,0.43293,859,3,4
  80,0.28926,1019,1,1
  140,0.16853,1299,1,1
  232,0.11754,1763,1,1
  345,0.077335,2453,1,1
  571,0.047594,3595,1,1
  948,0.028799,5491,1,1
  1553,0.017914,8597,1,1
  2514,0.01124,13625,1,1
  4088,0.007142,21801,1,1
  6631,0.0044589,35063,1,1
  10598,0.0028085,56259,1,1
  16937,0.0017677,1.0707e+05,2,1
  27024,0.0010978,1.6112e+05,1,1
  43564,0.00068909,2.4825e+05,1,1
  69850,0.00043035,5.2765e+05,2,2
  1.1016e+05,0.00027206,9.6829e+05,2,2
  1.7649e+05,0.00017238,1.4978e+06,1,2
  2.8078e+05,0.00010848,2.6209e+06,2,2
  4.3794e+05,6.8811e-05,4.3726e+06,2,2
  6.9374e+05,4.3766e-05,7.1476e+06,2,2
  1.0844e+06,2.7303e-05,1.4738e+07,3,4
  1.652e+06,1.7442e-05,2.7954e+07,4,4
  2.5069e+06,1.1778e-05,4.5503e+07,3,4
  3.8568e+06,7.5469e-06,7.6357e+07,4,4
  5.8347e+06,5.1273e-06,9.9696e+07,2,2
}{\OrderOne}%
\pgfplotstableread[col sep =
  comma]{
  ndofs, combined_estimator, complexity, jPrimal, jDual
  154,1.8713,616,2,2
  271,1.2805,2513,4,3
  412,0.09837,7045,5,6
  580,0.032616,10525,4,2
  742,0.017701,12751,2,1
  1015,0.0098268,16811,2,2
  1273,0.0053682,24449,4,2
  1483,0.0033873,28898,2,1
  1864,0.0019709,45674,6,3
  2182,0.0010859,52220,2,1
  2476,0.00075083,69552,4,3
  2926,0.0005213,81256,2,2
  3463,0.00031681,1.055e+05,4,3
  4093,0.00018883,1.2187e+05,2,2
  4783,0.00010846,1.697e+05,6,4
  5503,6.1647e-05,2.3023e+05,6,5
  6499,3.6578e-05,2.8222e+05,4,4
  7540,1.9467e-05,3.5762e+05,4,6
  8668,1.1923e-05,4.5297e+05,6,5
  10462,6.0075e-06,5.9944e+05,7,7
  12745,3.1902e-06,7.6512e+05,7,6
  15031,1.6952e-06,9.1544e+05,4,6
  18733,8.7129e-07,1.1215e+06,5,6
  23482,4.2062e-07,1.4033e+06,6,6
  28873,2.0709e-07,1.7498e+06,6,6
  35998,1.0849e-07,2.1097e+06,4,6
  45574,4.8791e-08,2.6566e+06,6,6
  56758,2.4461e-08,3.3377e+06,6,6
  70903,1.2555e-08,4.1886e+06,6,6
  88876,6.461e-09,5.2551e+06,6,6
  1.0875e+05,3.3219e-09,6.56e+06,6,6
  1.3659e+05,1.698e-09,8.1992e+06,6,6
  1.702e+05,8.6582e-10,1.0582e+07,7,7
  2.0845e+05,4.8512e-10,1.2041e+07,3,4
  2.6192e+05,2.4241e-10,1.4922e+07,5,6
  3.2712e+05,1.2305e-10,1.8848e+07,6,6
  4.0053e+05,6.4574e-11,2.4455e+07,7,7
  4.896e+05,3.6956e-11,2.7882e+07,3,4
  6.1939e+05,1.7912e-11,3.5315e+07,6,6
  7.582e+05,9.5963e-12,4.593e+07,7,7
  9.1836e+05,5.3936e-12,5.6032e+07,6,5
  1.1226e+06,3.0552e-12,6.838e+07,6,5
  1.375e+06,1.6492e-12,8.488e+07,6,6
  1.6531e+06,9.2138e-13,1.0802e+08,7,7
  1.9841e+06,5.8261e-13,1.1596e+08,2,2
  2.4378e+06,3.0232e-13,1.4278e+08,5,6
  2.9408e+06,1.6811e-13,1.7807e+08,6,6
  3.4964e+06,1.0601e-13,1.9205e+08,2,2
  4.2544e+06,5.6575e-14,2.3885e+08,5,6
}{\OrderThree}%

\begin{tikzpicture}[scale=1]
  \begin{semilogxaxis}[xlabel={$\sum_{| \ell', k', j' | \le | \ell, \underline{k},
              \underline{j} |} {\rm
              dim} \, \mathcal{X}_{\ell'}$},
      ylabel={total number of solver steps},
      ymajorgrids      = true,%
      grid style={densely dotted, semithick},
      each nth point=2,line join = round,
      line cap=round]

    \coordinate (legend) at (axis description cs:1.15, 0.55);
    \addplot [pyBlue,thick,mark=*,mark
    options={solid,fill=pyBlue!50!white, scale = 1.1}, only marks,
    solid] table [x={work},
        y={jPrimal}] {\OrderOne}; \label{primalOrderOne}
    \addplot [pyRed, thick, mark=otimes*,mark
    options={solid,fill=pyRed!50!white, scale = 1.1}, only marks, solid]
    table
      [x={work}, y={jPrimal}] {\OrderThree}; \label{primalOrderThree}

    \addplot [pyOrange,thick,mark=diamond*,mark
    options={solid,fill=pyOrange!50!white, scale = 1.1}, only marks, solid] table [x={work},
        y={jDual}] {\OrderOne}; \label{dualOrderOne}
    \addplot [pyPurple, thick, mark=halfdiamond*,mark
    options={solid,fill=pyPurple!50!white, scale = 1.1}, only marks, solid]
    table
      [x={work}, y={jDual}] {\OrderThree}; \label{dualOrderThree}

  \end{semilogxaxis}

  \matrix [
    matrix of nodes,
    anchor=south,
    font=\scriptsize,
    line join = round,
    line cap=round
  ] at (legend) {
    primal                 & \\
    \ref{primalOrderOne}   &
    $p=1$                    \\
    \ref{primalOrderThree} &
    $p=3$                    \\
    \medskip
    dual                   & \\
    \ref{dualOrderOne}     &
    $p=1$                    \\
    \ref{dualOrderThree}   &
    $p=3$                    \\
  };
\end{tikzpicture}
	}
    \caption{Number of total solver steps
        \(| \ell, \underline{m}, \underline{n}| - |\ell, 0, 0|\) resp.\ \(| \ell, \underline{\mu}, \underline{\nu}| - |\ell, 0, 0|\) on each mesh level
        for the benchmark problems~\eqref{eq:singularity_goal} (left) and 
		\eqref{eq:geometric_singularity} (right).\label{fig:iterations}}
\end{figure}

\begin{table}[htbp!]
	\resizebox{\columnwidth}{!}{
		\begin{tabular}{c ccccc ccccc ccccc}
	\toprule
	\(\cdot 10^{-7}\) & \multicolumn{5}{c}{\(\theta = 0.1\)} & \multicolumn{5}{c}{\(\theta = 0.3\)} & \multicolumn{5}{c}{\(\theta = 0.5\)} \\
	\cmidrule(lr){2-6} \cmidrule(lr){7-11} \cmidrule(lr){12-16}
	\diagbox{$\lambda_{\rm alg}$}{$\lambda_{\rm sym}$} & 0.1 & 0.3 & 0.5 & 0.7 & 0.9 & 0.1 & 0.3 & 0.5 & 0.7 & 0.9 & 0.1 & 0.3 & 0.5 & 0.7 & 0.9 \\
	\midrule
	0.1 & 38.7 & 33.4 & 29.6 & \colorbox{row!40}{22.1} & 24.4 & 10.2 & 5.12 & 4.90 & 4.83 & \colorbox{row!40}{4.74} & 6.18 & \colorbox{row!40}{4.48} & 4.66 & 4.89 & 5.25 \\
	0.3 & 36.2 & 24.7 & 24.5 & \colorbox{best!40}{21.8} & \colorbox{col!40}{23.1} & 7.28 & 4.98 & 3.53 & 3.27 & \colorbox{best!40}{3.26} & \colorbox{row!40}{4.18} & 4.54 & 4.79 & 5.01 & 5.13 \\
	0.5 & 24.3 & 24.7 & 24.7 & \colorbox{row!40}{23.4} & 23.6 & 5.84 & 3.64 & 3.39 & \colorbox{row!40}{3.27} & 3.37 & 3.41 & 2.71 & 2.52 & \colorbox{row!40}{2.49} & 2.68 \\
	0.7 & 24.1 & 24.8 & 23.8 & \colorbox{row!40}{22.2} & 24.0 & 4.95 & 3.59 & 3.30 & \colorbox{best!40}{3.25} & 3.42 & \colorbox{col!40}{2.74} & 2.35 & \colorbox{col!40}{2.41} & \colorbox{best!40}{2.24} & 2.46 \\
	0.9 & \colorbox{col!40}{23.5} & \colorbox{col!40}{24.6} & \colorbox{best!40}{22.3} & 24.4 & 23.8 & \colorbox{col!40}{4.90} & \colorbox{col!40}{3.58} & \colorbox{col!40}{3.29} & \colorbox{row!40}{3.26} & 3.41 & 2.81 & \colorbox{col!40}{2.30} & 2.43 & \colorbox{row!40}{2.27} & \colorbox{col!40}{2.41} \\
	\midrule
	& \multicolumn{5}{c}{\(\theta = 0.7\)} & \multicolumn{5}{c}{\(\theta = 0.8\)} & \multicolumn{5}{c}{\(\theta = 0.9\)} \\
	\cmidrule(lr){2-6} \cmidrule(lr){7-11} \cmidrule(lr){12-16}
	0.1 & 5.82 & \colorbox{row!40}{5.18} & 5.43 & 5.40 & 5.93 & 8.53 & \colorbox{row!40}{6.10} & 7.31 & 6.67 & 7.77 & 11.6 & \colorbox{row!40}{8.86} & 9.12 & 9.87 & 9.97 \\
	0.3 & \colorbox{row!40}{4.65} & 4.86 & 5.35 & 5.98 & 6.67 & 6.27 & \colorbox{row!40}{5.92} & 7.20 & 7.46 & 7.57 & 8.62 & \colorbox{row!40}{8.40} & 9.27 & 10.6 & 11.5 \\
	0.5 & 3.69 & 2.89 & \colorbox{row!40}{2.88} & 2.95 & 3.13 & 5.09 & \colorbox{row!40}{3.61} & 3.66 & 3.63 & 3.66 & 7.27 & 5.32 & \colorbox{row!40}{4.84} & 4.93 & 5.12 \\
	0.7 & 2.99 & \colorbox{row!40}{2.56} & \colorbox{col!40}{2.64} & \colorbox{col!40}{2.62} & \colorbox{col!40}{2.89} & 3.75 & 3.12 & 3.23 & \colorbox{best!40}{3.03} & \colorbox{col!40}{3.11} & \colorbox{col!40}{4.58} & \colorbox{best!40}{3.95} & \colorbox{col!40}{4.04} & 4.43 & 4.79 \\
	0.9 & \colorbox{col!40}{2.89} & \colorbox{best!40}{2.49} & 2.65 & 2.66 & \colorbox{col!40}{2.89} & \colorbox{col!40}{3.79} & \colorbox{best!40}{3.11} & \colorbox{col!40}{3.19} & 3.13 & 3.27 & 4.67 & \colorbox{row!40}{4.06} & 4.16 & \colorbox{col!40}{4.35} & \colorbox{col!40}{4.61} \\
	\bottomrule
\end{tabular}

	}
	\vspace{0.1cm}
	\caption{Optimal selection of parameters with 
		respect to the cumulative computational costs
        (overall computation time in seconds) for the 
		experiment~\eqref{eq:singularity_goal} with fixed polynomial 
		degree 
		\(p=2\) and \(\delta = 0.5\). For comparison,
        the table displays the value of 
		the weighted costs from~\eqref{eq:weighted_costs} (in \(10^{-7}\)) 
		with overall stopping 
		criterion $\eta_\ell(u_\ell^{\underline{m}, \underline{n}}) \, 
		\zeta_\ell(u_\ell^{\underline{\mu}, \underline{\nu}}) < 5 \cdot 
		10^{-10}$ for 
		various choices of 
		$\lambda_{\rm sym}$, $\lambda_{\rm alg}$, and $\theta$. For each 
		\(\theta\)-block, we mark the row-wise optimal values 
		in blue, the 
		column-wise optimal values in yellow, and in green if 
		both optimal values coincide.\label{tab:cost}
	}
\end{table}
\vspace{-0.4cm}

\medskip
\section{Summary}\label{section:conclusion}
In this work, we developed a cost-optimal goal-oriented adaptive finite element
    method for the efficient computation of the quantity of interest \(G(u^\star)\)
    with solution \(u^\star\) to the general second-order linear elliptic partial
    differential equation \eqref{eq:model_problem}. Since the current analysis of
    iterative algebraic solvers for nonsymmetric systems with optimal preconditioner
    only leads to contraction of the residual in a vector norm, we proposed a
    nested iterative solver for the primal and dual problem in parallel. The strategy consists
    of the Zarantonello iteration~\eqref{eq:Zarantonello-iterations} as an outer solver loop and
    an optimal multigrid solver for the arising SPD system as an innermost solver loop. In recent own work~\cite{fps2023},
    we have shown that the link between convergence rates with respect to the degrees
    of freedom and the total computational cost is full linear convergence of the quasi-error~\(\Eta_{\ell}^{k, j} \, \Zeta_{\ell}^{k, j}\).
    To this end, Theorem~\ref{lem:full_linear_convergence} shows that the proposed adaptive
    algorithm contracts (up to a multiplicative constant) the quasi-error product \(\Eta_{\ell}^{k, j} \, \Zeta_{\ell}^{k, j} \)
    in every step, independently of the algorithmic decision to employ mesh refinement, symmetrization, or the algebraic
    solver. A particular problem in the analysis is that the nested iterative solver procedure only guarantees contraction as long as \(1 \le k < \kk[\ell]\), whereas contraction for the final iterate is only guaranteed up to an estimator term (cf.~\eqref{eq2:inexact_Zarantonello_contraction}). Another difficulty arises from the nonsymmetric setting with a quasi-Pythagorean estimate~\eqref{eq:quasi_orthogonality} replacing the usual Pythagorean estimate.
    Therefore, the proof of Theorem~\ref{lem:full_linear_convergence} employs the equivalence of R-linear convergence and tail-summability of the
    quasi-error product~\(\Eta_{\ell}^{k, j} \, \Zeta_{\ell}^{k, j} \) and leads to mild restriction on the product \(\lamsym \, \lamalg\) of the involved solver stopping parameters.
        The key ingredients to cost-optimality are an
        adaptive mesh-refinement algorithm with optimal
        convergence rate with respect to the number of
        degrees of freedom (under the assumption of exact
        solution) and an algebraic solver for the
        linear system of equations that is contractive
        with respect to the underlying Sobolev norm.
        In this regard, the analysis in this paper may guide
        the generalization to conforming discretizations of
        vector-valued elliptic problems.
    Finally, the numerical experiments in
    Section~\ref{section:numerics} suggest that the proposed
    strategy allows for large stopping parameter in practice
    and that a larger choice is beneficial in terms of total
    runtime.
    Admittedly, the development of an optimal solver for the
    nonsymmetric problem~\eqref{eq:discrete_formulation}
    would allow to prove full linear convergence with an
    arbitrary selection of the stopping parameter.

\sloppy
\printbibliography
\end{document}